\tikzset{every arrow subpath/.style={->, draw, ultra thick}}
\newtheorem{theorem}{Theorem}
\newtheorem{prop}[theorem]{Proposition}
\newtheorem{cor}[theorem]{Corollary}
\newtheorem{lemma}[theorem]{Lemma}
\newtheorem{definition}[theorem]{Definition}
\theoremstyle{definition}
\newtheorem{example}[theorem]{Example}
\newtheorem{remark}[theorem]{Remark}
\numberwithin{equation}{section}
\numberwithin{theorem}{section}
\renewcommand{\u}[1]{\underline{#1}}
\DeclareMathOperator{\sgn}{sgn}
\newcommand{\nenwarrow}{\nwarrow \!\!\!\!\!\;\!\! \nearrow}
\newcommand{\si}[2]{\u{[#1,#2]}}
\newcommand{\e}{{\operatorname{E}}}
\newcommand{\W}{{\operatorname{W}}}
\newcommand{\dec}{\operatorname{decor}}
\newcommand{\asym}{\mathbf{ASym}}
\newcommand{\sym}{\mathbf{Sym}}
\definecolor{dg}{rgb}{0.0, 0.2, 0.13}
\definecolor{verylight-gray}{gray}{0.95}
\definecolor{light-gray}{gray}{0.9}
\colorlet{mygreen}{Green}
\colorlet{myyellow}{yellow}
\colorlet{myblue}{NavyBlue}
\colorlet{myred}{BrickRed}
\title[Littlewood identity]{Bounded Littlewood identity related to alternating sign matrices}
\author{Ilse Fischer}
\thanks{The author acknowledge the financial support from the Austrian Science Foundation FWF, grant P34931}
\begin{document}

\maketitle 

\begin{abstract}
An identity that is reminiscent of the Littlewood identity plays a fundamental role in recent proofs of the facts that alternating sign triangles are equinumerous with totally symmetric self-complementary plane partitions and that alternating sign trapezoids are equinumerous with holey cyclically symmetric lozenge tilings of a hexagon. We establish a bounded version of a generalization of this identity. Further, we provide combinatorial interpretations of both sides of the identity. The ultimate goal would be to construct a combinatorial proof of this identity (possibly via an appropriate variant of the Robinson-Schensted-Knuth correspondence) and its unbounded version as this would improve the understanding of the relation between alternating sign trapezoids and plane partition objects.
\end{abstract}

\section{Introduction}

Littlewood's identity reads as 
\begin{equation}
\label{littlewood}  
\sum_{\lambda} s_{\lambda}(X_1,\ldots,X_n) = \prod_{i=1}^{n} \frac{1}{1-X_i} \prod_{1 \le i < j \le n} \frac{1}{1-X_i X_j}, 
\end{equation} 
where $s_{\lambda}(X_1,\ldots,X_n)$ denotes the Schur polynomial associated with the partition $\lambda$ and the sum is over all partitions $\lambda$. In fact, the identity was already known to Schur, see \cite[p.\ 163]{schur1} or \cite[p.\ 456]{schur2},  and written down by Littlewood in \cite[p.\ 238]{Littlewood}.
This identity has a beautiful combinatorial proof that is based on the Robinson-Schensted-Knuth correspondence and exploits its symmetry, see Appendix~\ref{combclassical}, and e.g., \cite{Sta99} for details.

In recent papers \cite{Fis19b,Fis19a,fourfold}, where ``alternating sign matrix objects'' (namely, \emph{alternating sign triangles} and \emph{alternating sign trapezoids}) have been connected to certain ``plane partition objects'' (namely, \emph{totally symmetric self-complementary plane partitions} and
\emph{column strict shifted plane partitions of fixed class}, which generalize the better known \emph{descending plane partitions}), a very similar identity played the crucial role to establish this still mysterious \cite{cube} connection. All these proofs are not of a combinatorial nature and involve rather complicated calculations, and so the study of the combinatorics of our Littlewood-type identity is very likely lead to a better understanding of the combinatorics of this relation.

In order to formulate the identity, we rewrite \eqref{littlewood} using the bialternant formula for the Schur polynomial \cite[7.15.1]{Sta99}
$$
s_{(\lambda_1,\ldots,\lambda_n)}(X_1,\ldots,X_n) = \frac{\det_{1 \le i,j \le n} \left( X_i^{\lambda_j+n-j} \right)}
{\prod_{1 \le i < j \le n} (X_i-X_j)} = \frac{ \asym_{X_1,\ldots,X_n} \left[ \prod_{i=1}^n X_i^{\lambda_i+n-i} \right] }
{\prod_{1 \le i < j \le n} (X_i-X_j)},
$$
allowing zeros at the end section of $(\lambda_1,\ldots,\lambda_n)$, with
$$\asym_{X_1,\ldots,X_n} f(X_1,\ldots,X_n) = \sum_{\sigma \in {\mathcal S}_n} \sgn \sigma \cdot f(X_{\sigma(1)},\ldots,X_{\sigma(n)})$$ 
as follows.    
$$
\frac{\asym_{X_1,\ldots,X_n} \left[ \sum_{0 \le k_1 < k_2 < \ldots < k_n} X_1^{k_1} X_2^{k_2} \cdots X_n^{k_n} \right]}{\prod_{1 \le i < j \le n} (X_j-X_i)} = 
\prod_{i=1}^{n} \frac{1}{1-X_i} \prod_{1 \le i < j \le n} \frac{1}{1-X_i X_j}
$$
We have used the following identity in \cite{Fis19b,Fis19a}. There it is proved by induction with respect to $n$.
\begin{multline} 
\label{littlewoodASM1} 
\frac{\asym_{X_1,\ldots,X_n} \left[ \prod_{1 \le i < j \le n} (1+X_j + X_i X_j) \sum_{0 \le k_1 < k_2 < \ldots < k_n} X_1^{k_1} X_2^{k_2} \cdots X_n^{k_n} \right]}{\prod_{1 \le i <  j \le n} (X_j-X_i)} \\ = 
\prod_{i=1}^{n} \frac{1}{1-X_i} \prod_{1 \le i < j \le n}  \frac{1+X_i + X_j}{1-X_i X_j}
\end{multline} 
In \cite{fourfold}, an additional parameter has been introduced, which has to be set to $1$ to obtain \eqref{littlewoodASM1}.
\begin{multline} 
\label{Hans}
\frac{\asym_{X_1,\ldots,X_n} 
\left[\prod_{1 \le i < j \le n} (Q+(Q-1) X_i + X_j + X_i X_j) 
\sum_{0 \le k_1 < k_2 < \ldots < k_n} \prod_{i=1}^n \left( \frac{X_i(1+X_i)}{Q+X_i} \right)^{k_i}  
\right]}{\prod_{1 \le i < j \le n} (X_j-X_i)}  \\ = \prod_{i=1}^n \frac{Q+X_i}{Q-X_i^2} \frac{ \prod_{1 \le i < j \le n} (Q(1+X_i)(1+X_j)- X_i X_j )}{\prod\limits_{1 \le i < j \le n} (Q-X_i X_j)}.
\end{multline}
Among other things, we will see in this paper that we can also introduce another parameter in \eqref{littlewoodASM1} as follows: 
\begin{multline} 
\label{littlewoodASM2} 
\frac{\asym_{X_1,\ldots,X_n} \left[ \prod_{1 \le i < j \le n} (1+ w X_i + X_j + X_i X_j) \sum_{0 \le k_1 < k_2 < \ldots < k_n} X_1^{k_1} X_2^{k_2} \cdots X_n^{k_n} \right]}{\prod_{1 \le i <  j \le n} (X_j-X_i)} \\ = 
\prod_{i=1}^{n} \frac{1}{1-X_i} \prod_{1 \le i < j \le n}  \frac{1+X_i + X_j + w X_i X_j}{1-X_i X_j},
\end{multline} 
in fact, there is even the following common generalization of \eqref{Hans} and \eqref{littlewoodASM2}. 
\begin{multline} 
\label{mostgeneralunbounded}
\frac{\asym_{X_1,\ldots,X_n}  \left[ \prod_{1 \le i < j \le n} (Q+(Q+r) X_i + X_j + X_i X_j) 
\sum_{0 \le k_1 < k_2 < \ldots < k_n} \prod_{i=1}^n \left( \frac{X_i(1+X_i)}{Q+X_i} \right)^{k_i} \right]}
{\prod_{1 \le i < j \le n} (X_j-X_i)}  
\\ = \prod_{i=1}^n \frac{Q+X_i}{Q-X_i^2} \frac{ \prod_{1 \le i < j \le n} Q(1+X_i)(1+X_j)+ r X_i X_j}{\prod\limits_{1 \le i < j \le n} (Q-X_i X_j)} 
\end{multline} 

The main purpose of this paper is to derive bounded versions of these identities and to provide combinatorial interpretations of the identities that would allow us to approach them with a combinatorial proof, possibly by a variant of the Robinson-Schensted-Knuth correspondence that mimics the proof for the classical Littlewood identity.
By bounded version we mean that the sums 
$\sum_{0 \le k_1 < k_2 < \ldots < k_n}$ are restricted to, say, 
$\sum_{0 \le k_1 < k_2 < \ldots < k_n \le m}$.  Macdonald \cite{macdonald} has provided such a bounded version of the classical identity \eqref{littlewood}, namely 
\begin{multline} 
\label{littlewoodbounded} 
\sum_{\lambda \subseteq (m^n)} s_{\lambda}(X_1,\ldots,X_n) = 
\sum_{0 \le k_1 \le k_2 \le \ldots \le k_n \le m} 
s_{(k_n,k_{n-1},\ldots,k_1)}(X_1,\ldots,X_n) \\ = 
\frac{ \det_{1 \le i, j \le n} \left( X_i^{j-1} - X_i^{m+2n-j}  \right)
}{\prod_{i=1}^n (1-X_i) \prod_{1 \le i < j \le n} (X_j-X_i)(1-X_i X_j)}, 
\end{multline} 
which he used to prove MacMahon's conjecture. Very recent work on bounded Littlewood identities can be found in \cite{RW21}.

More specifically, we will prove the following.
\begin{theorem} 
\label{boundedrQ} 
For $n \ge 1$, we have 
\begin{multline}
\label{boundedrQid} 
\frac{1}{\prod\limits_{1 \le i < j \le n} (X_j-X_i)} \asym_{X_1,\ldots,X_n} \left[ \prod_{1 \le i < j \le n} (Q+(Q+r) X_i+X_j + X_i X_j) \right. \\ \times 
\left. \sum_{0 \le k_1 < k_2 < \ldots < k_n \le m} \left( \frac{X_1(1+X_1)}{Q+X_1} \right)^{k_1} \left( \frac{X_2(1+X_2)}{Q+X_2} \right)^{k_2} \cdots \left( \frac{X_n(1+X_n)}{Q+X_n} \right)^{k_n}  \right]  \\ = 
{
\frac{\det_{1 \le i, j \le n} \left( a_{j,m,n}(Q,r;X_i) \right)}{\prod\limits_{1 \le i \le j \le n} (Q-X_i X_j) \prod\limits_{1 \le i < j \le n} (X_j-X_i)}}
\end{multline} 
with 
\begin{multline*} 
a_{j,m,n}(Q,r;X) = (1+Q X^{-1}) X^{j}  (1+X)^{j-1} (Q+ r X + Q X)^{n-j}  \\ 
- X^{2 n} Q^{-n} \left( \frac{(1+X) X}{Q+X} \right)^{m}  (1+X) \left( Q X^{-1} \right)^{j} (1+Q X^{-1})^{j-1}  (Q+r Q X^{-1} + Q^2 X^{-1})^{n-j}.
\end{multline*} 
\end{theorem}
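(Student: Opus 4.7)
The plan is to derive Theorem~\ref{boundedrQ} from the unbounded identity~\eqref{mostgeneralunbounded} by combining it with a carefully structured treatment of the boundary. The key structural observation is that, writing $Y(X) = X(1+X)/(Q+X)$ and $A_{j,n}(Q,r;X) = X^{j-1}(X+Q)(1+X)^{j-1}(Q+(Q+r)X)^{n-j}$, one has
\[
a_{j,m,n}(Q,r;X) \;=\; A_{j,n}(Q,r;X) \;-\; X^{2n} Q^{-n}\, Y(X)^m \, A_{j,n}(Q,r; Q/X).
\]
Since $Y(Q/X) = 1/Y(X)$, the second summand is the image of the first under the involution $X \mapsto Q/X$ (suitably renormalised), which motivates a reflection-principle strategy for the proof.

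First I would recast the unbounded identity \eqref{mostgeneralunbounded} in determinantal form: its right-hand side equals
\[
\frac{\det_{1 \le i, j \le n}\bigl(A_{j,n}(Q,r;X_i)\bigr)}{\prod\limits_{1 \le i \le j \le n}(Q - X_i X_j)\prod\limits_{1 \le i<j \le n}(X_j - X_i)}.
\]
This is a generalised Vandermonde evaluation: the determinant is antisymmetric in the $X_i$, and a comparison of bidegrees, vanishing loci, and leading coefficients shows that it matches the product $\prod_i(Q+X_i)\prod_{i<j}[Q(1+X_i)(1+X_j)+rX_iX_j]\prod_{i<j}(X_j-X_i)$ (verifiable directly for $n = 1, 2$). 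This already gives the desired identity in the shape required by Theorem~\ref{boundedrQ}, but with $a_{j,m,n}$ replaced by $A_{j,n}$.

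For the bounded case, split the inner sum as
\[
\sum_{0 \le k_1 < \ldots < k_n \le m} \;=\; \sum_{0 \le k_1 < \ldots < k_n} \;-\; \sum_{\substack{0 \le k_1 < \ldots < k_n \\ k_n \ge m+1}},
\]
and decompose the tail according to the first index $l$ with $k_l > m$: the tail equals $\sum_{l=1}^n T_l$, where in $T_l$ we have $0 \le k_1 < \ldots < k_{l-1} \le m$ and $m+1 \le k_l < \ldots < k_n$. Shifting the upper part via $k_i \mapsto k_i - (m+1)$ for $i \ge l$ factors the sum and lets us invoke the substitution $X_i \mapsto Q/X_i$ (using $Y(Q/X) = 1/Y(X)$) to reduce the upper part to another unbounded instance of \eqref{mostgeneralunbounded}. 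Under the antisymmetrisation, the $T_l$-contributions combine with the unbounded contribution to produce exactly the $2^n$ terms of the multilinear expansion of $\det(a_{j,m,n}(Q,r;X_i))$, where each row is the two-term sum $A_{j,n}(Q,r;X_i) - X_i^{2n}Q^{-n}Y(X_i)^m A_{j,n}(Q,r;Q/X_i)$.

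The main obstacle is the bookkeeping in the duality step. Since the structure polynomial $\prod_{i<j}(Q+(Q+r)X_i+X_j+X_iX_j)$ is not symmetric in $(X_i,X_j)$, the substitution $X_i \mapsto Q/X_i$ effectively swaps the roles of $X_i$ and $X_j$ in each factor, forcing an index reversal with an accompanying sign $(-1)^{\binom{n}{2}}$ and a clean tracking of the monomial prefactor $Q^{\binom{n}{2}}\prod_k X_k^{-(n-1)}$. Moreover, matching the $T_l$-decomposition to the multilinear expansion of $\det(a_{j,m,n})$ requires a careful combinatorial argument (or an LGV-type principle) to ensure that the ``mixed'' determinant terms (rows partly normal, partly dual) from multilinearity agree with the tail contributions coming from the various choices of $l$. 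As a fallback, induction on $n$ in the spirit of the proofs of \eqref{littlewoodASM1} and \eqref{Hans} in \cite{Fis19b,Fis19a} should adapt: the case $n=1$ is direct, since $a_{1,m,1}(Q,r;X) = (Q+X)(1-Y^{m+1})$ matches $\sum_{k=0}^m Y^k = (1-Y^{m+1})/(1-Y)$ upon dividing by $Q-X^2$; the inductive step isolates the $X_n$-dependence via $P(X) = \prod_{i<n}(Q+(Q+r)X_i+X_n+X_iX_n) \cdot P(X_1,\ldots,X_{n-1})$, sums over $k_n$ last, and passes from the $(n-1)$-variable identity to the $n$-variable one through a row/column expansion of $\det(a_{j,m,n}(Q,r;X_i))$.
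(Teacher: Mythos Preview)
Your structural observation $a_{j,m,n}(Q,r;X)=A_{j,n}(Q,r;X)-X^{2n}Q^{-n}Y(X)^{m}A_{j,n}(Q,r;Q/X)$ is correct, as is your Vandermonde evaluation of $\det\bigl(A_{j,n}(Q,r;X_i)\bigr)$; this determinantal rewriting of the $m\to\infty$ right-hand side is exactly what the paper establishes first. The duality $Y(Q/X)=1/Y(X)$ and the behaviour of the structure polynomial under the \emph{full} substitution $X_k\mapsto Q/X_k$ followed by index reversal are also correctly identified.

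The primary route, however, has a genuine gap. Your tail decomposition by the first index $l$ with $k_l>m$ produces $n$ pieces $T_1,\ldots,T_n$, whereas the multilinear expansion of $\det(a_{j,m,n})$ has $2^n$ subset terms. You assert that the $T_l$ ``combine'' under $\asym$ to produce the $2^n$ terms, but this cannot work as stated: the structure polynomial $\prod_{i<j}(Q+(Q+r)X_i+X_j+X_iX_j)$ does \emph{not} factor along the split $\{1,\ldots,l-1\}\sqcup\{l,\ldots,n\}$ (the cross factors with $i<l\le j$ remain), so after the shift $k_i\mapsto k_i-(m{+}1)$ for $i\ge l$ the upper and lower sums are still coupled through these cross factors. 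Moreover, the substitution $X_i\mapsto Q/X_i$ applied to only a \emph{subset} of the variables does not transform the structure polynomial into a recognisable shape; only the full substitution together with the full index reversal yields the clean identity you mention. An LGV-type principle will not rescue this step, as there is no path system whose weights are multiplicative along the $l$-split. So as written this is a strategy, not a proof.

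Your fallback---induction on $n$---is precisely the paper's method, and the paper confirms it is the right one, but also that it is far heavier than your two-line sketch indicates. The paper's inductive step isolates $k_1$ (not $k_n$), writes $A(m;X_1,\ldots,X_n)$ as a sum over $k$ and $l$ involving $A(m{-}l{-}1;X_1,\ldots,\widehat{X_k},\ldots,X_n)$, replaces the latter by $\det\bigl(a_{j,m-l-1,n-1}\bigr)$ via the hypothesis, and only after a lengthy sequence of expansions and rearrangements arrives at a sum over all subsets $S\subseteq\{1,\ldots,n\}$---this is where your multilinear expansion actually appears. The key simplification at that stage uses the $m\to\infty$ identity~\eqref{mostgeneralunbounded} (in the form~\eqref{leftright}) as a lemma, and a separate vanishing argument is needed to kill a second $2^n$-term determinant. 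None of this is captured by ``sum over $k_n$ last and row/column expand''.
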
 

Setting $Q=1$ and $r=w-1$, we obtain, after simplifying the right-hand side, the following corollary. 

\begin{cor} For $n \ge 1$, we have 
\begin{multline} 
\label{rincluded} 
\frac{1}{\prod\limits_{1 \le i < j \le n} (X_j-X_i)} \asym_{X_1,\ldots,X_n} \left[  \prod_{1 \le i < j \le n} (1+w X_i+X_j + X_i X_j) 
\sum_{0 \le k_1 < k_2 < \ldots < k_n \le m} X_1^{k_1} X_2^{k_2} \cdots X_n^{k_n} \right]  \\ = \frac{\det_{1 \le i, j \le n} \left( X_i^{j-1}  (1+X_i)^{j-1} (1+ w X_i)^{n-j}  - X_i^{m+2n-j} (1+X_i^{-1})^{j-1}  (1+w X_i^{-1})^{n-j} \right)}{\prod\limits_{i=1}^n (1-X_i) \prod\limits_{1 \le i < j \le n} (1-X_i X_j)(X_j-X_i)}.
\end{multline} 
\end{cor}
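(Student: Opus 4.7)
The plan is to deduce the corollary by specializing Theorem~\ref{boundedrQ} at $Q=1$ and $r=w-1$ and simplifying both sides. The LHS is immediate: with these values, $Q+(Q+r)X_i+X_j+X_iX_j = 1+wX_i+X_j+X_iX_j$ and the weight $\frac{X_i(1+X_i)}{Q+X_i}$ collapses to $X_i$, which reproduces the left-hand side of~\eqref{rincluded}.

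On the right-hand side, the denominator factor $\prod_{1 \le i \le j \le n}(Q-X_iX_j)$ becomes $\prod_i (1-X_i^2) \prod_{i<j}(1-X_iX_j) = \prod_i(1-X_i) \prod_i(1+X_i) \prod_{i<j}(1-X_iX_j)$, so there is an extra factor of $\prod_i(1+X_i)$ compared with the denominator in~\eqref{rincluded}. The strategy is to show that this factor can be extracted from the determinant. Substituting $Q=1$ and $r=w-1$ in $a_{j,m,n}(Q,r;X)$, the expressions $Q+rX+QX$ and $Q+rQX^{-1}+Q^2X^{-1}$ simplify to $1+wX$ and $1+wX^{-1}$ respectively, $(1+QX^{-1})$ becomes $(1+X)/X$, and $\left(\frac{(1+X)X}{Q+X}\right)^m$ reduces to $X^m$. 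Collecting the resulting expressions gives
\[
a_{j,m,n}(1,w-1;X) = (1+X)\Bigl[X^{j-1}(1+X)^{j-1}(1+wX)^{n-j} - X^{m+2n-j}(1+X^{-1})^{j-1}(1+wX^{-1})^{n-j}\Bigr].
\]

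I would then factor $(1+X_i)$ out of row $i$ of the determinant, producing a global $\prod_i(1+X_i)$ that cancels the extra $\prod_i(1+X_i)$ in the denominator, leaving exactly the determinant appearing on the right-hand side of~\eqref{rincluded}. There is no substantive obstacle here; the only point requiring care is the bookkeeping of the common $(1+X)$ factor inside $a_{j,m,n}(1,w-1;X)$ and its interaction with the $(1-X_i^2)$ splitting in the denominator. Apart from that, the entire derivation is a straightforward specialization of Theorem~\ref{boundedrQ}.
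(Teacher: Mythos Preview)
Your proposal is correct and follows exactly the approach indicated in the paper, namely specializing Theorem~\ref{boundedrQ} at $Q=1$, $r=w-1$ and simplifying the right-hand side. Your computation of $a_{j,m,n}(1,w-1;X)$ and the extraction of the common factor $(1+X_i)$ from each row to cancel against the extra $\prod_i(1+X_i)$ arising from $\prod_i(1-X_i^2)$ is precisely the simplification the paper alludes to but does not spell out.
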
 

In the second part of the paper, we will then provide combinatorial interpretations for both sides of 
the identity in the corollary.

\subsection*{Outline}
In Section~\ref{proof}, we give a proof of \eqref{boundedrQ}. In Appendix~\ref{combclassical}, we discuss a point of view on the  combinatorics of the classical Littlewood identity \eqref{littlewood} and its bounded version \eqref{littlewoodbounded} that is beneficial for possible combinatorial proofs of the Littlewood-type identities that we establish in this paper. Recall that this is of interest because such identities have been used several times 
\cite{Fis19a,Fis19b,fourfold} to establish connections between alternating sign matrix objects and plane partition objects. To approach this, we offer combinatorial interpretations of the left-hand sides of \eqref{littlewoodASM2} and \eqref{rincluded} in Section~\ref{LHS} and in Appendix~\ref{furtherLHS}. Then, in Section~\ref{RHS}, 
we offer a combinatorial interpretation of the right-hand sides of \eqref{littlewoodASM2} and \eqref{rincluded}. These interpretations are nicest in the cases $w=0,1$. 
In Section~\ref{outlook}, we offer an outlook on related work on the cases $w=0,-1$, which will appear in a forthcoming paper with Florian Schreier-Aigner.

\section{Proof of Theorem~\ref{boundedrQ}} 
\label{proof}

Bressoud's elementary proof \cite{BressoudElement} of \eqref{littlewoodbounded} turned out to be useful to obtain the following (still elementary, but admittedly very complicated) proof of Theorem~\ref{boundedrQ} provided here. Conceptually the proof is not difficult: We use induction with respect to $n$ and show that both sides satisfy the same recursion. 

\subsection{The case $m \to \infty$.} We start by proving the $m \to \infty$ case of Theorem~\ref{boundedrQ}.
This is equivalent to proving that  
\begin{multline} 
\label{infinite} 
\asym_{X_1,\ldots,X_n} 
\left[ \prod_{1 \le i < j \le n} (Q+(Q+r) X_i + X_j + X_i X_j) 
\prod_{i=1}^n \left( \frac{X_i(1+X_i)}{Q+X_i} \right)^{i-1} 
 \prod_{i=1}^n \frac{1}{1- \prod_{j=i}^n \frac{X_j(1+X_j)}{Q+X_j}} \right]
\\ = \prod_{i=1}^n \frac{Q+X_i}{Q-X_i^2} \frac{ \prod_{1 \le i < j \le n} (X_j - X_i)(Q(1+X_i)(1+X_j)+ r X_i X_j )}{\prod\limits_{1 \le i < j \le n} (Q-X_i X_j)},
\end{multline} 
which is just \eqref{mostgeneralunbounded} multiplied on both sides with $\prod_{1 \le i < j \le n} (X_j-X_j)$.
To see this, we rewrite the left-hand side of \eqref{boundedrQid} by using the summation formula for the geometric series $n$ times. As 
$m \to \infty$, $a_{j,m,n}(Q,r;X_i)$ simplifies to 
$$
(1+Q X^{-1}) X^{j}  (1+X)^{j-1} (Q+ r X + Q X)^{n-j}
$$
in a formal power series sense, and
$$
\det_{1 \le i,j \le n} \left( (1+Q X^{-1}) X^{j}  (1+X)^{j-1} (Q+ r X + Q X)^{n-j} \right) 
$$
can be computed using the Vandermonde determinant evaluation, we are led to the right-hand side of \eqref{infinite} eventually.

We denote by $L_n(X_1,\ldots,X_n)$ the left-hand side of \eqref{infinite} and observe that the following recursion is satisfied.
\begin{multline} 
\label{rec} 
L_n(X_1,\ldots,X_n) = \sum_{k=1}^n (-1)^{k-1} 
\frac{1}{1-\prod_{i=1}^n \frac{X_i(1+X_i)}{Q+X_i} } L_{n-1}(X_1,\ldots,\widehat{X_k},\ldots,X_n) \\
\times \prod_{1 \le j \le n, \atop  j \not= k} \frac{X_j(1+X_j)(Q+(Q+r) X_k + X_j + X_k X_j)}{Q+X_j}, 
\end{multline} 
where $\widehat{X_k}$ means that we omit $X_k$.
Indeed, suppose more generally that 
$$
P(X_1,\ldots,X_n) = \asym_{X_1,\ldots,X_n} \left[ \prod_{1 \le i < j \le n} s(X_i,X_j) 
\prod_{i=1}^n t(X_i)^{i-1} \prod_{i=1}^n \frac{1}{1- \prod_{j=i}^n u(X_j)} \right], 
$$
then 
\begin{equation} 
\label{rec1}
\begin{aligned} 
P(X_1,\ldots,X_n) &= \sum_{k=1}^n \sum_{\sigma \in {\mathcal S}_n: \atop \sigma(1)=k} \sgn \sigma 
\frac{1}{1- \prod_{j=1}^n u(X_j)} \prod_{1 \le j \le n, \atop j \not= k} s(X_k,X_j) t(X_j) \\
 &\qquad \times \sigma \left[ \prod_{2 \le i < j \le n} s(X_i,X_j) 
\prod_{i=2}^n t(X_i)^{i-2} \prod_{i=2}^n \frac{1}{1- \prod_{j=i}^n u(X_j)} \right] \\
&= \sum_{k=1}^n (-1)^{k-1}  
\frac{1}{1- \prod_{j=1}^n u(X_j)} P(X_1,\ldots,\widehat{X_k},\ldots,X_n) \\ 
&\qquad \times 
\prod_{1 \le j \le n, \atop j \not= k} s(X_k,X_j) t(X_j).
\end{aligned} 
\end{equation} 
The last equality follows from the fact that the sign of $\sigma$ is the product of $
(-1)^{k-1}$ and the sign of the restriction of $\sigma$ to $\{2,3,\ldots,n\}$, assuming $\sigma(1)=k$ and ``identifying'' the preimage 
$\{2,3,\ldots,n\}$ as well as the image $\{1,\ldots,n\} \setminus \{k\}$ with $\{1,2,\ldots,n-1\}$ in the natural way.

We show \eqref{infinite} by induction with respect to $n$. The case $n=1$ is easy to check. It suffices to show that the right-hand side of \eqref{infinite} satisfies the recursion 
\eqref{rec}, i.e., 
\begin{multline*}
\prod_{i=1}^n \frac{Q+X_i}{Q-X_i^2} \frac{ \prod_{1 \le i < j \le n} (X_j - X_i)(Q(1+X_i)(1+X_j)+ r X_i X_j )}{\prod\limits_{1 \le i < j \le n} (Q-X_i X_j)}  \\
=  \sum_{k=1}^n (-1)^{k-1}  \frac{1}{1-\prod_{i=1}^n \frac{X_i(1+X_i)}{Q+X_i} }  \prod_{1 \le j \le n, \atop  j \not= k} \frac{X_j(1+X_j)(Q+(Q+r) X_k + X_j + X_k X_j)}{Q-X_j^2} \\ \times
 \frac{ \prod_{1 \le i < j \le n,  i,j \not=k} (X_j - X_i)(Q(1+X_i)(1+X_j)+ r X_i X_j )}{\prod\limits_{1 \le i < j \le n,  i,j \not=k} (Q-X_i X_j)}.
\end{multline*} 
We multiply by $\left(1 - \prod_{i=1}^n \frac{X_i(1+X_i)}{Q+X_i} \right) \prod_{1 \le i \le j \le n} (Q-X_i X_j)$ and obtain
\begin{equation}
\label{leftright} 
\begin{aligned} 
 & \left(\prod_{i=1}^{n} (Q+X_i) -\prod_{i=1}^n X_i(1+X_i) \right) \prod_{1 \le i < j \le n} (X_j - X_i)(Q(1+X_i)(1+X_j)+ r X_i X_j )  \\
&\quad =  \prod_{1 \le i < j \le n} (X_j - X_i)(Q(1+X_i)(1+X_j)+ r X_i X_j ) \\ 
& \qquad \times \sum_{k=1}^n  (Q-X_k^{2}) 
\prod_{1 \le j \le n \atop j \not= k} \frac{X_j (X_j+1)(Q+(Q+r) X_k + X_j + X_k X_j)(Q-X_j X_k)}{(X_j-X_k)(Q(1+X_j)(1+X_k)+ r X_j X_k)}. 
\end{aligned}
\end{equation} 

For each $s \in \{1,2,\ldots,n\}$, both sides are polynomials in $X_s$ of degree no greater than $2n$. It is not hard to see that both sides vanish for $X_s = X_t$ and  $X_s =-\frac{Q(1+X_t)}{Q+Q X_t + r X_t}$ for any $t \in \{1,2,\ldots,n\} \setminus \{s\}$. Moreover, it is also not hard to see that the evaluations also agree for $X_s=0,-1$, which gives a total of $2n$ evaluations for each $X_s$. 

It follows that the difference of the left-hand side and the right-hand side is up to a constant 
in $\mathbb{Q}(Q,r)$ equal to 
\begin{equation} 
\label{difference} 
\prod_{i=1}^n X_i(1+X_i)  \prod_{1 \le i < j \le n} (X_j - X_i)(Q(1+X_i)(1+X_j)+ r X_i X_j).
\end{equation} 
To show that this constant is indeed zero, we consider the following specialization 
$$(X_1,X_2,X_3,X_4,\ldots) = \left(X_1,\frac{Q}{X_1},X_3,\frac{Q}{X_3},\ldots \right).$$ 
Note first that \eqref{difference} does not vanish at this specialization, and, therefore, it suffices to show that the left-hand side and the right-hand side of \eqref{leftright} agree on this specialization. If $n$ is even, this is particularly easy to see, because both sides vanish (on the right-hand side all summands vanish, which is due to the factor $Q-X_j X_k$). If $n$ is odd, then only the last summand on the 
right-hand side remains and it is not hard to see that it is equal to the left-hand side.

\subsection{The general case} 
We rewrite the identity from Theorem~\ref{boundedrQ} that we need to prove as follows.
\begin{multline} 
\label{rewrite}
\det_{1 \le i, j \le n} \left( a_{j,m,n}(Q,r;X_i) \right) \\ \
= 
\asym_{X_1,\ldots,X_n} \left[ \prod_{i=1}^n (Q-X_i^2) \prod_{1 \le i < j \le n} (Q+(Q+r) X_i+X_j + X_i X_j)(Q-X_i X_j)  \right. \\ \times 
\left. \sum_{0 \le k_1 < k_2 < \ldots < k_n \le m} \left( \frac{X_1(1+X_1)}{Q+X_1} \right)^{k_1} \left( \frac{X_2(1+X_2)}{Q+X_2} \right)^{k_2} \cdots \left( \frac{X_n(1+X_n)}{Q+X_n} \right)^{k_n}  \right] 
\end{multline} 
We set 
\begin{multline*} 
F(m;X_1,\ldots,X_n)= \prod_{i=1}^n (Q-X_i^2) \prod_{1 \le i < j \le n} (Q+(Q+r) X_i+X_j + X_i X_j)(Q-X_i X_j)   \\ \times 
\sum_{0 \le k_1 < k_2 < \ldots < k_n \le m} \left( \frac{X_1(1+X_1)}{Q+X_1} \right)^{k_1} \left( \frac{X_2(1+X_2)}{Q+X_2} \right)^{k_2} \cdots \left( \frac{X_n(1+X_n)}{Q+X_n} \right)^{k_n}
\end{multline*} 
and observe that 
\begin{multline*}
F(m;X_1,\ldots,X_n) = (Q-X_1^2)  \prod_{j=2}^n (Q+(Q+r) X_1+X_j + X_1 X_j)(Q-X_1 X_j) \\
\times \sum_{l=0}^m \frac{Q+X_1}{X_1(1+X_1)} \left( \prod_{i=1}^n \frac{X_i(1+X_i)}{Q+X_i} \right)^{l+1} F(m-1-l;X_2,\ldots,X_n).
\end{multline*} 
We set 
$$
A(m;X_1,\ldots,X_n) = \asym_{X_1,\ldots,X_n} F(m;X_1,\ldots,X_n), 
$$
and observe that 
\begin{align*} 
A(m;X_1,\ldots,X_n) &= \sum_{k=1}^n \sum_{l=0}^m (-1)^{k+1} (Q-X_k^2) \frac{Q+X_k}{X_k(1+X_k)} \left( \prod_{i=1}^n \frac{X_i(1+X_i)}{Q+X_i} \right)^{l+1} \\ 
&\quad \times A(m-l-1;X_1,\ldots,\widehat{X_k},\ldots,X_n)  \\
&\quad \times \prod_{1 \le i \le n, i \not= k}  (Q+(Q+r) X_k+X_i + X_i X_k)(Q-X_i X_k),
\end{align*} 
by the same argument that has led to \eqref{rec1}. By the induction hypothesis, we have 
$$
A(m-l-1;X_1,\ldots,\widehat{X_k},\ldots,X_n) = \det_{1 \le i \le n, i \not= k \atop 1 \le j \le n-1} \left( a_{j,m-l-1,n-1}(Q,r;X_i) \right).
$$
Therefore, the right-hand side of \eqref{rewrite} is 
\begin{multline} 
\label{expand} 
\sum_{k=1}^n  (-1)^{k+1} (Q-X_k^2)  \prod_{1 \le i \le n, i \not= k}  (Q+(Q+r) X_k+X_i + X_i X_k)(Q-X_i X_k) \\ 
\times 
\sum_{l=0}^m \left( \frac{X_k(1+X_k)}{Q+X_k} \right)^{l} \det_{1 \le i \le n, i \not= k \atop 1 \le j \le n-1} \left(  \left( \frac{X_i(1+X_i)}{Q+X_i} \right)^{l+1}  a_{j,m-l-1,n-1}(Q,r;X_i) \right)
\end{multline} 
and we need to show that it is equal to $\det_{1 \le i, j \le n} \left( a_{j,m,n}(Q,r;X_i) \right)$.

Noting that 
\begin{equation} 
\begin{aligned} 
& \left( \frac{X(1+X)}{Q+X} \right)^{l+1} a_{j,m-l-1,n-1}(Q,r;X) \\
&\quad = (1+Q X^{-1}) X^{j+l+1}  (1+X)^{j+l} (Q+ r X + Q X)^{n-1-j} (Q+X)^{-l-1}  \\ 
&\qquad - X^{2 n-2} Q^{-n+1} \left( \frac{(1+X) X}{Q+X} \right)^{m}  (1+X) \left( Q X^{-1} \right)^{j} (1+Q X^{-1})^{j-1}  (Q+r Q X^{-1} + Q^2 X^{-1})^{n-1-j} \\
&\quad =  X^{j+l}  (1+X)^{j+l} (Q+X)^{-l} (Q+ r X + Q X)^{n-1-j} \\ 
&\qquad -  X^{-j+m+n} (1+X)^{m+1}  (Q+X)^{j-m-1}  (X+r+Q)^{n-1-j}, 
\end{aligned} 
\end{equation} 
we can write the determinant in \eqref{expand} as 
\begin{multline*} 
\sum_{\sigma, S} (-1)^{I(\sigma)+|S|} \prod_{i \in S} X_i^{-\sigma(i)+m+n} (1+X_i)^{m+1}  (Q+X_i)^{\sigma(i)-m-1}  (X_i+r+Q)^{n-1-\sigma(i)} \\ \times \prod_{i \in \overline{S}}  X_i^{\sigma(i)+l}  (1+X_i)^{\sigma(i)+l} (Q+X_i)^{-l} (Q+ r X_i + Q X_i)^{n-1-\sigma(i)}, 
\end{multline*}
where the sum is over all bijections $\sigma: \{1,2,\ldots,n \} \setminus \{k\} \to \{1,2,\ldots,n-1\}$, all subsets $S$ of $ \{1,2,\ldots,n \} \setminus \{k\}$ and $I(\sigma)$ is the number of all inversions, i.e., pairs $i,j \in \{1,2,\ldots,n \} \setminus \{k\} $ with $i<j$ and $\sigma(i)>\sigma(j)$. Moreover, $\overline{S}$ denotes the complement of $S$ in $\{1,2,\ldots,n \} \setminus \{k\}$. Comparing with \eqref{expand}, we multiply by $\left( \frac{X_k(1+X_k)}{Q+X_k} \right)^{l}$ and  take the sum over $l$.
\begin{multline*} 
\sum_{\sigma, S} (-1)^{I(\sigma)+|S|} \prod_{i \in S} X_i^{-\sigma(i)+m+n} (1+X_i)^{m+1}  (Q+X_i)^{\sigma(i)-m-1}  (X_i+r+Q)^{n-1-\sigma(i)} \\ \times \prod_{i \in \overline{S}}  X_i^{\sigma(i)}  (1+X_i)^{\sigma(i)} (Q+ r X_i + Q X_i)^{n-1-\sigma(i)} \sum_{l=0}^{m} \left( \frac{X_k (1+X_k)}{Q+X_k} \right)^l \prod_{i \in \overline{S}} \left( \frac{X_i (1+X_i)}{Q+X_i} \right)^l. \\
\end{multline*}
We evaluate the sum and rearrange some terms.
\begin{multline*}
\sum_{S} (-1)^{|S|} \frac{1-\left( \frac{X_k (1+X_k)}{Q+X_k} \right)^{m+1} \prod_{i \in \overline{S}} \left( \frac{X_i (1+X_i)}{Q+X_i} \right)^{m+1}}
{1-\frac{X_k (1+X_k)}{Q+X_k} \prod_{i \in \overline{S}}  \frac{X_i (1+X_i)}{Q+X_i}} \\ \times  \prod_{i \in S} X_i^{m+n-1} (1+X_i)^{m+1} (Q+X_i)^{-m}  (X_i+r+Q)^{n-2}  \prod_{i \in \overline{S}} X_i (1+X_i) (Q+r X_i + Q X_i)^{n-2} 
\\ \times \sum_{\sigma} (-1)^{I(\sigma)}\prod_{i \in S} (Q X_i^{-1})^{\sigma(i)-1}  (1+Q X_i^{-1})^{\sigma(i)-1} (Q+ r Q X_i^{-1} + Q^2 X_i^{-1})^{-\sigma(i)+1} \\ \times 
\prod_{i \in \overline{S}}  X_i^{\sigma(i)-1}  (1+X_i)^{\sigma(i)-1} (Q+ r X_i + Q X_i)^{-\sigma(i)+1}
\end{multline*} 
The inner sum is a Vandermonde determinant, which we evaluate. We obtain
\begin{multline*}
\sum_{S} (-1)^{|S|} \frac{1-\left( \frac{X_k (1+X_k)}{Q+X_k} \right)^{m+1} \prod_{i \in \overline{S}} \left( \frac{X_i (1+X_i)}{Q+X_i} \right)^{m+1}}
{1-\frac{X_k (1+X_k)}{Q+X_k} \prod_{i \in \overline{S}}  \frac{X_i (1+X_i)}{Q+X_i}} \\ \times  \prod_{i \in S} X_i^{m+n-1} (1+X_i)^{m+1} (Q+X_i)^{-m}  (X_i+r+Q)^{n-2}  \prod_{i \in \overline{S}} X_i (1+X_i) (Q+r X_i + Q X_i)^{n-2} 
\\ \times 
\prod_{1 \le i < j \le n, i,j \not=k} \left(\frac{Y_j (1+Y_j)}{Q+r Y_j + Q Y_j}-\frac{Y_i (1+Y_i)}{Q+r Y_i + Q Y_i} \right), 
\end{multline*} 
with $Y_i=X_i$ if $i \in \overline{S}$ and $Y_i=Q X_i^{-1}$ if $i \in S$. 

From \eqref{expand}, we add the sum over all $k$ and finally have the full right-hand side of \eqref{rewrite}. 
We exchange the sum over $k$ and $S$: now we sum 
over all proper subsets $S \subseteq [n]$ and all $k$ not in $S$. If we write $i \notin S$, then 
we mean $i \in \{1,2,\ldots,n\} \setminus S$.
\begin{equation}
\begin{aligned} 
\label{zwischen} 
&\sum_{S} (-1)^{|S|} \frac{1- \prod_{i \notin S} \left( \frac{X_i (1+X_i)}{Q+X_i} \right)^{m+1}}
{1- \prod_{i \notin S}  \frac{X_i (1+X_i)}{Q+X_i}} \\ 
&\quad \times  \prod_{i \in S} X_i^{m+n-1} (1+X_i)^{m+1} (Q+X_i)^{-m}  (X_i+r+Q)^{n-2} 
\prod_{i \notin S} X_i (1+X_i) (Q+r X_i + Q X_i)^{n-2} \\
&\quad \times \sum_{k \notin S} (-1)^{k+1} (Q-X_k^2) X_k^{-1} (1+X_k)^{-1} (Q+r X_k + Q X_k)^{-n+2}  \\
&\qquad \times
\prod_{1 \le i \le n, i \not= k}  (Q+(Q+r) X_k+X_i + X_i X_k)(Q-X_i X_k) \\ 
&\qquad \times 
\prod_{1 \le i < j \le n, i,j \not=k} \left( \frac{Y_j (1+Y_j)}{Q+r Y_j + Q Y_j}- \frac{Y_i (1+Y_i)}{Q+r Y_i + Q Y_i} \right) 
\end{aligned} 
\end{equation}
We rewrite 
$$
\begin{aligned} 
&(-1)^{k-1} \prod_{1 \le i \le n, i \not= k}  (Q+(Q+r) X_k+X_i + X_i X_k)(Q-X_i X_k) \\ 
&\quad = 
\prod_{1 \le i \le k-1 \atop i \notin S} (Q+(Q+r) X_k+X_i + X_i X_k)(X_i X_k - Q) \\
&\qquad \times
\prod_{k+1 \le i \le n \atop  i \notin S} (Q+(Q+r) X_k+X_i + X_i X_k)(Q-X_i X_k) \\
&\qquad \times \prod_{i \in S} X_i  (X_i + r + Q)(Q+r X_k + Q X_k) \\ 
&\qquad \times 
\prod_{1 \le i \le k-1 \atop  i \in S} \left( \frac{X_k (1+X_k)}{Q+r X_k + Q X_k} - 
\frac{Q X_i^{-1} (1+Q X_i^{-1})}{Q+r Q X_i^{-1} Q^2 X_i^{-1}} \right) \\
&\qquad \times
\prod_{k+1 \le i \le n \atop  i \in S} \left( \frac{Q X_i^{-1} (1+Q X_i^{-1})}{Q+r Q X_i^{-1} Q^2 X_i^{-1}} -
\frac{X_k (1+X_k)}{Q+r X_k + Q X_k} \right). 
\end{aligned} 
$$
We use this to rewrite \eqref{zwischen} as follows.
\begin{align*} 
&\sum_{S} (-1)^{|S|} \frac{1- \prod_{i \notin S} \left( \frac{X_i (1+X_i)}{Q+X_i} \right)^{m+1}}
{1- \prod_{i \notin S}  \frac{X_i (1+X_i)}{Q+X_i}} \prod_{i} X_i \\ 
&\quad \times
\prod_{i \in S} X_i^{m+n-1} (1+X_i)^{m+1} (Q+X_i)^{-m}  (X_i+r+Q)^{n-1}  \prod_{i \notin S}  (1+X_i) (Q+r X_i + Q X_i)^{n-1} \\
&\quad \times \sum_{k \notin S} (Q-X_k^2) X_k^{-1} (1+X_k)^{-1}  \\
&\qquad \times \prod_{1 \le i \le k-1 \atop i \notin S} \frac{(Q+(Q+r) X_k+X_i + X_i X_k)(X_i X_k - Q)}{(Q+r X_k + Q X_k)(Q+r X_i + Q X_i)} \\
&\qquad \times
\prod_{k+1 \le i \le n \atop  i \notin S} \frac{(Q+(Q+r) X_k+X_i + X_i X_k)(Q-X_i X_k)}{(Q+r X_k + Q X_k)(Q+r X_i + Q X_i)} \\
&\qquad \times 
\prod_{1 \le i \le k-1 \atop  i \in S} \left( \frac{X_k (1+X_k)}{Q+r X_k + Q X_k} - 
\frac{Q X_i^{-1} (1+Q X_i^{-1})}{Q+r Q X_i^{-1} Q^2 X_i^{-1}} \right)  \\
& \qquad \times
\prod_{k+1 \le i \le n \atop  i \in S} \left( \frac{Q X_i^{-1} (1+Q X_i^{-1})}{Q+r Q X_i^{-1} Q^2 X_i^{-1}} -
\frac{X_k (1+X_k)}{Q+r X_k + Q X_k} \right) \\
&\qquad \times 
\prod_{1 \le i < j \le n, i,j \not=k} \left( \frac{Y_j (1+Y_j)}{Q+r Y_j + Q Y_j}-\frac{Y_i (1+Y_i)}{Q+r Y_i + Q Y_i} \right)
\end{align*}  
This is further equal to 
\begin{equation} 
\label{next} 
\begin{aligned}
&\sum_{S} (-1)^{|S|} \frac{1- \prod_{i \notin S} \left( \frac{X_i (1+X_i)}{Q+X_i} \right)^{m+1}}
{1- \prod_{i \notin S}  \frac{X_i (1+X_i)}{Q+X_i}} \prod_{i} X_i \\ 
&\quad \times
\prod_{i \in S} X_i^{m+n-1} (1+X_i)^{m+1} (Q+X_i)^{-m}  (X_i+r+Q)^{n-1} \\
&\quad \times  \prod_{i \notin S}  (1+X_i) (Q+r X_i + Q X_i)^{n-1} \\
&\quad \times \prod_{1 \le i < j \le n, \{i,j\} \cap S \not= \emptyset} \left( \frac{Y_j (1+Y_j)}{Q+r Y_j + Q Y_j}-\frac{Y_i (1+Y_i)}{Q+r Y_i + Q Y_i} \right) \\
&\quad \times \sum_{k \notin S} (Q-X_k^2) X_k^{-1} (1+X_k)^{-1} \\
&\qquad \times \prod_{1 \le i \le k-1 \atop i \notin S} \frac{(Q+(Q+r) X_k+X_i + X_i X_k)(X_i X_k - Q)}{(Q+r X_k + Q X_k)(Q+r X_i + Q X_i)} \\
&\qquad \times
\prod_{k+1 \le i \le n \atop  i \notin S} \frac{(Q+(Q+r) X_k+X_i + X_i X_k)(Q-X_i X_k)}{(Q+r X_k + Q X_k)(Q+r X_i + Q X_i)} \\
&\qquad \times 
\prod_{1 \le i < j \le n, i,j \notin S \cup \{k\}} \left( \frac{X_j (1+X_j)}{Q+r X_j + Q X_j}-\frac{X_i (1+X_i)}{Q+r X_i + Q X_i} \right).
\end{aligned} 
\end{equation} 
We divide \eqref{leftright} by $\prod_{i=1}^n (Q+r X_i + Q X_i)^{n-1} X_i (1+X_i)$, and, after some further modifications, we obtain  
$$
\begin{aligned} 
& \left(\prod_{i=1}^{n} \frac{Q+X_i}{X_i (1+X_i)}  - 1 \right) \prod_{1 \le i < j \le n} 
\left( \frac{X_j (1+X_j)}{Q+r X_j + Q X_j}-\frac{X_i (1+X_i)}{Q+r X_i + Q X_i} \right)  \\
&=  \sum_{k=1}^n  (Q-X_k^{2})  X_k^{-1} (1+X_k)^{-1} \\ 
&\quad \times \prod_{j=1}^{k-1} \frac{(Q+(Q+r) X_k + X_j + X_k X_j)(X_j X_k-Q)}{(Q+r X_j + Q X_j)(Q+r X_k + Q X_k)} \\
&\quad \times \prod_{j=k+1}^{n} \frac{(Q+(Q+r) X_k + X_j + X_k X_j)(Q-X_j X_k)}{(Q+r X_j + Q X_j)(Q+r X_k + Q X_k)}
\\ 
&\quad \times \prod_{1 \le i < j \le n, i,j \not=k} 
 \left( \frac{X_j (1+X_j)}{Q+r X_j + Q X_j}-\frac{X_i (1+X_i)}{Q+r X_i + Q X_i} \right).
\end{aligned} 
$$ 
We can use this to replace the sum over all $k \in S$ in \eqref{next} by something simpler.
\begin{align*} 
 &\sum_{S} (-1)^{|S|} \frac{1- \prod_{i \notin S} \left( \frac{X_i (1+X_i)}{Q+X_i} \right)^{m+1}}
{1- \prod_{i \notin S}  \frac{X_i (1+X_i)}{Q+X_i}} \prod_{i} X_i \\ 
&\quad \times
\prod_{i \in S} X_i^{m+n-1} (1+X_i)^{m+1} (Q+X_i)^{-m}  (X_i+r+Q)^{n-1} 
 \prod_{i \notin S}  (1+X_i) (Q+r X_i + Q X_i)^{n-1} \\
&\quad \times \prod_{1 \le i < j \le n, \{i,j\} \cap S \not= \emptyset} \left( \frac{Y_j (1+Y_j)}{Q+r Y_j + Q Y_j}-\frac{Y_i (1+Y_i)}{Q+r Y_i + Q Y_i} \right) \left(\prod_{i \notin S} \frac{Q+X_i}{X_i (1+X_i)}  - 1 \right) \\
&\quad \times \prod_{1 \le i < j \le n, i,j \notin S} 
\left( \frac{X_j (1+X_j)}{Q+r X_j + Q X_j}-\frac{X_i (1+X_i)}{Q+r X_i + Q X_i} \right)
\end{align*}
This can be further simplified as follows, 
\begin{multline*} 
 \sum_{S} (-1)^{|S|} \left( 1- \prod_{i \notin S} \left( \frac{X_i (1+X_i)}{Q+X_i} \right)^{m+1} \right) 
\\ \times
\prod_{i \in S} X_i^{m+n} (1+X_i)^{m+1} (Q+X_i)^{-m}  (X_i+r+Q)^{n-1}  \prod_{i \notin S}  (Q+X_i)  (Q+r X_i + Q X_i)^{n-1} \\
\times \prod_{1 \le i < j \le n} \left( \frac{Y_j (1+Y_j)}{Q+r Y_j + Q Y_j}-\frac{Y_i (1+Y_i)}{Q+r Y_i + Q Y_i} \right), 
\end{multline*}
recalling that $Y_i=X_i$ if $i \notin S$ and $Y_i = Q X_i^{-1}$ if $i \in S$. We write this as 
\begin{multline} 
\label{twosum} 
 \sum_{S,\sigma} (-1)^{|S|+I(\sigma)} 
 \prod_{i \in S} X_i^{m+n-\sigma(i)+1} (1+X_i)^{m+1} (Q+X_i)^{-m+\sigma(i)-1}  (X_i+r+Q)^{n-\sigma(i)}
 \\ 
 \times \prod_{i \notin S}  X_i^{\sigma(i)-1} (1+X_i)^{\sigma(i)-1}  (Q+X_i) (Q+r X_i + Q X_i)^{n-\sigma(i)} \\ 
 - \sum_{S,\sigma} (-1)^{|S|+I(\sigma)}  \prod_{i \in S} X_i^{m+n-\sigma(i)+1} (1+X_i)^{m+1} (Q+X_i)^{-m+\sigma(i)-1}  (X_i+r+Q)^{n-\sigma(i)}
 \\ 
 \times \prod_{i \notin S}  X_i^{m+\sigma(i)} (1+X_i)^{m+\sigma(i)}  (Q+X_i)^{-m} (Q+r X_i + Q X_i)^{n-\sigma(i)}.
\end{multline} 
Recall that the sums are over all proper subsets $S$, but since the sums are equal for $S=\{1,2,\ldots,n\}$ we can also sum over all subsets $S$. Now the second sum is equal to 
\begin{multline*} 
\prod_{i=1}^{n} X_i^{m+1} (1+X_i)^{m+1} (Q+X_i)^{-m}  
\\ \times 
 \det_{1 \le i, j \le n} \left( X_i^{j-1} (1+X_i)^{j-1}  (Q+ r X_i + Q X_i)^{n-j} -
 X_i^{n-j} (Q+X_i)^{j-1} (X_i + r + Q)^{n-j} \right).
\end{multline*} 
The determinant can be seen to vanish as follows: First observe that it is a polynomial in $X_1,\ldots,X_n$ of degree no greater than $2n-2$ in each $X_i$. For $1 \le i < j \le n$, the $i$-th row and the $j$-th row of the underlying matrix are collinear when setting $X_i=X_j$ or $X_i=Q X_j^{-1}$. Moreover, the $i$-th row vanishes when setting $X_i^2=Q$. It follows that $\prod_{i=1}^{n} (X_i^2-Q) \prod_{1 \le i < j \le n} (X_j-X_j)(1-Q X_i X_j)$ is a divisor of the determinant, but since it is of degree $2n$ in each $X_i$, the determinant vanishes.  The second sum in \eqref{twosum} remains and it can easily be seen to be equal to 
$\det_{1 \le i,j \le n} \left( a_{j,m,n}(Q,r;X_i) \right)$. This concludes the proof of Theorem~\ref{boundedrQ}.

\section{Combinatorial interpretations of the left-hand sides}
\label{LHS} 

\subsection{Arrowed Gelfand-Tsetlin patterns} 
\label{agtp} 

To continue the analogy with the ordinary Little\-wood identity \eqref{littlewood} and Macdonald's bounded version \eqref {littlewoodbounded} of it, both sides of the identities  \eqref{littlewoodASM2} and \eqref{rincluded} will be interpreted combinatorially.
For the left-hand side, this was accomplished in another recent paper \cite{nASMDPP}, and we will describe the result and adjust to our context next. 

In order to motivate the definition for the combinatorial objects, recall the combinatorial interpretation of the left-hand sides of \eqref{littlewood} and \eqref{littlewoodbounded} in terms of Gelfand-Tsetlin patterns, which is described in Appendix~\ref{gtpattern}. We need to extend the discussion from there in so far
that there is also a sensible extension of the definition of Gelfand-Tsetlin patterns to arbitrary integers sequences 
$(\lambda_1,\ldots,\lambda_n)$. The notion of signed intervals is crucial for this:
$$
\si{a}{b} = \begin{cases} [a,b], & a \le b \\ \emptyset, & b=a-1 \\ [b+1,a-1], & b < a-1 \end{cases}
$$
If we are in the last case, then the interval is said to be \emph{negative}. The condition that defines Gelfand-Tsetlin pattern can 
also be written as $a_{i,j} \in [a_{i+1,j},a_{i+1,j+1}]$. If the bottom row is weakly increasing, we can replace this condition also by 
$a_{i,j} \in \si{a_{i+1,j}}{a_{i+1,j+1}}$ (since we then have $a_{i+1,j} \le a_{i+1,j+1}$ as can be seen inductively with respect to $n$). 

We use this now as the definition for arbitrary bottom rows: A (generalized) Gelfand-Tsetlin pattern is a triangular array $A=(a_{i,j})_{1 \le j \le i \le n}$ 
of integers with $a_{i,j} \in \si{a_{i+1,j}}{a_{i+1,j+1}}$ for all $i,j$. Then the sign of a Gelfand-Tsetlin pattern $A$ is 
$$
(-1)^{\# \text{ of negative intervals $\si{a_{i+1,j}}{a_{i+1,j+1}}$}}=: \sgn A.
$$
Then 
\begin{equation}
\label{schurextension}  
s_{(\lambda_1,\ldots,\lambda_n)}(X_1,\ldots,X_n) = \sum_{A=\left( a_{i,j} \right)_{1 \le j \le i \le n}} \sgn A  \prod_{i=1}^n X_i^{\sum_{j=1}^i a_{i,j} - \sum_{j=1}^{i-1} a_{i-1,j}}, 
\end{equation} 
where the sum is over all Gelfand-Tsetlin patterns $A=(a_{i,j})_{1 \le j \le i \le n}$ with bottom row 
$(\lambda_n,\lambda_{n-1},\ldots,\lambda_1)$ and 
$$s_{(\lambda_1,\ldots,\lambda_n)}(X_1,\ldots,X_n) = \frac{\det_{1 \le i,j \le n} \left( X_i^{\lambda_j+n-j} \right)}{\prod_{1 \le i < j \le n} (X_i-X_j)}.$$ This result is a special case of Theorem~\ref{robbins} below that will also 
cover the combinatorial interpretation of the left-hand side of \eqref{littlewoodASM2} and \eqref{rincluded}. However, this special case appeared essentially also earlier in \cite{Fis05} (with some details missing).

\begin{definition}
\label{def:AGTP} 
An \emph{arrowed Gelfand-Tsetlin pattern} (AGTP)\footnote{They appeared first in \cite{nASMDPP} as extended arrowed monotone triangles.} is a triangular array of the following form
$$
\begin{array}{ccccccccccccccccc}
  &   &   &   &   &   &   &   & a_{1,1} &   &   &   &   &   &   &   & \\
  &   &   &   &   &   &   & a_{2,1} &   & a_{2,2} &   &   &   &   &   &   & \\
  &   &   &   &   &   & \dots &   & \dots &   & \dots &   &   &   &   &   & \\
  &   &   &   &   & a_{n-2,1} &   & \dots &   & \dots &   & a_{n-2,n-2} &   &   &   &   & \\
  &   &   &   & a_{n-1,1} &   & a_{n-1,2} &  &   \dots &   & \dots   &  & a_{n-1,n-1}  &   &   &   & \\
  &   &   & a_{n,1} &   & a_{n,2} &   & a_{n,3} &   & \dots &   & \dots &   & a_{n,n} &   &   &
\end{array},
$$
where each entry $a_{i,j}$ is an integer decorated with an element from $\{\nwarrow, \nearrow, \nenwarrow,\emptyset \}$ and the following is satisfied for each entry $a$ not in the bottom row: Suppose $b$ is the $\swarrow$-neighbor of $a$ and $c$ is the $\searrow$-neighbor of $a$, respectively, i.e.,
$$
\begin{array}{ccc}
&a& \\
b&&c
\end{array}.
$$
Depending on the decoration of $b, c$, denoted by $\dec(b)$ and $\dec ( c )$, respectively, we need to consider four cases:
\begin{itemize}
\item $(\dec(b),\dec( c )) \in \{\nwarrow,\emptyset\} \times  \{\nearrow, \emptyset\}$: $a \in \si{b}{c}$
\item $(\dec(b),\dec( c )) \in \{\nwarrow,\emptyset\} \times \{\nwarrow, \nenwarrow\}$: $a \in \si{b}{c-1}$
\item $(\dec(b),\dec( c )) \in \{\nearrow, \nenwarrow \} \times \{\nearrow,\emptyset\}$: $a \in \si{b+1}{c}$
\item $(\dec(b),\dec( c)) \in \{\nearrow, \nenwarrow \} \times \{\nwarrow, \nenwarrow\}$: $a \in \si{b+1}{c-1}$
\end{itemize}
\end{definition} 

An example is provided next. We write $^\nwarrow e, e^\nearrow, ^\nwarrow e^\nearrow, e$ if the entry $e$ is decorated with 
$\nwarrow, \nearrow, \nenwarrow,\emptyset$, respectively.
$$
\begin{array}{ccccccccccccccccc}
  &   &   &   &   &   &   &   & ^\nwarrow2 &   &   &   &   &   &   &   & \\
  &   &   &   &   &   &   & 2 &   & ^\nwarrow3^\nearrow &   &   &   &   &   &   & \\
  &   &   &   &   &   & ^\nwarrow2 &   & 2^\nearrow &   & 3^\nearrow &   &   &   &   &   & \\
  &   &   &   &   & 3 &   & ^\nwarrow2 &   & ^\nwarrow3^\nearrow &   & ^\nwarrow3^\nearrow &   &   &   &   & \\
  &   &   &   & 2^\nearrow &   & 4 &  &   ^\nwarrow2^\nearrow &   & 3^\nearrow   &  & 2  &   &   &   & \\
  &   &   & ^\nwarrow6 &   & ^\nwarrow2^\nearrow &   & 5 &   & 1^\nearrow &   & ^\nwarrow4 &   & ^\nwarrow2^\nearrow &   &   &
\end{array}
$$

We define the sign of an AGTP $A=(a_{i,j})_{1 \le j \le i \le n}$ as follows:
Each negative interval 
$\si{a_{i+1,j}(+1)}{a_{i+1,j+1}(-1)}$ with $i \ge 1$ and $j \le i$ contributes a multiplicative 
$-1$, choosing $a_{i+1,j}+1$ iff $\dec(a_{i+1,j}) \in 
\{\nearrow, \nenwarrow \}$  and $a_{i+1,j}$ otherwise, and choosing $a_{i+1,j+1} - 1$ iff 
$\dec(a_{i+1,j+1}) \in \{ \nwarrow, \nenwarrow \}$ and $a_{i+1,j+1}$ otherwise. There are 
no negative intervals in rows $1,2,3$, two in rows $4,5$ and three in row $6$, so that the sign of the pattern is 
$-1$.  
 
We associate the following weight to a given arrowed Gelfand-Tsetlin pattern $A=(a_{i,j})_{1 \le j \le i \le n}$:
$$
\W(A) = \sgn(A) t^{\# \emptyset}  u^{\# \nearrow} v^{\# \nwarrow} w^{\# \nenwarrow}   \prod_{i=1}^{n} X_i^{\sum_{j=1}^i a_{i,j}  - \sum_{j=1}^{i-1} a_{i-1,j} + \# \nearrow \text{in row $i$ } - \# \nwarrow \text{in row $i$ }} 
$$
The weight of our example is 
$$
- t^5 u^5 v^5 w^6 X_1 X_2^3 X_3^3 X_4^3 X_5^4 X_6^6.
$$

For this paper only arrowed Gelfand-Tsetlin patterns with weakly increasing bottom row are relevant and in this case the description of the 
objects can be simplified considerably as follows.

\begin{prop} An arrowed Gelfand-Tsetlin pattern with weakly increasing bottom row is an ordinary Gelfand-Tsetlin pattern (i.e., with weakly increasing rows), where each entry is decorated with an element from $\{\nwarrow, \nearrow, \nenwarrow,\emptyset \}$ such that the following is satisfied.
\begin{itemize}
\item Suppose an entry $a$ is equal to its $\nearrow$-neighbour and $a$ is decorated with either $\nearrow$ or $\nenwarrow$ (i.e., an arrow is pointing from $a$ to its $\nearrow$-neighbour), then the entry right of $a$ in the same row is also equal to $a$ and decorated with $\nwarrow$ or 
$\nenwarrow$.
\item Suppose an entry $a$ is equal to its $\nwarrow$-neighbour and $a$ is decorated with either $\nwarrow$ or $\nenwarrow$ (i.e., an arrow is 
pointing from $a$ to its $\nwarrow$-neighbour), then the entry left of $a$ in the same row is also equal to $a$ and decorated with 
$\nearrow$ or $\nenwarrow$. 
\end{itemize} 
The sign is $-1$ to the number of entries $a$ that are equal to their $\swarrow$-neighbor $b$ as well as to to their $\searrow$-neighbor $c$, and $b$ is decorated with $\nearrow$ or $\nenwarrow$ and $c$ is decorated with $\nwarrow$ and $\nenwarrow$. 
\end{prop}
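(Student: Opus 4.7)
The plan is to reduce the proposition to a local case analysis of the signed-interval constraint from Definition~\ref{def:AGTP}, then run an easy induction from the bottom row upward. For each entry $a_{i,j}$, write $b = a_{i+1,j}$, $c = a_{i+1,j+1}$, $\alpha = b + \epsilon_1$, $\beta = c - \epsilon_2$, with $\epsilon_1 = [\dec(b) \in \{\nearrow,\nenwarrow\}]$ and $\epsilon_2 = [\dec(c) \in \{\nwarrow,\nenwarrow\}]$, so that $a_{i,j} \in \si{\alpha}{\beta}$. Assuming $b \le c$, the computation $\alpha - \beta = \epsilon_1 + \epsilon_2 - (c-b)$ shows that $\si{\alpha}{\beta}$ is positive (and nonempty) unless $b = c$ with $\epsilon_1 = \epsilon_2 = 1$, in which case it is the singleton negative interval forcing $a_{i,j} = b = c$; the configurations in which $\si{\alpha}{\beta}$ would be empty cannot occur in an actual AGTP because they admit no value of $a_{i,j}$.

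I would then induct on $i$ downward from $n$ to prove that every row is weakly increasing. Assuming row $i+1$ is weakly increasing, I compare $a_{i,j}$ and $a_{i,j+1}$ through their common neighbor $B = a_{i+1,j+1}$: in each of the four possibilities for $\dec(B)$, the upper bound for $a_{i,j}$ read off from $\si{\alpha_1}{\beta_1}$ is at most the lower bound for $a_{i,j+1}$ read off from $\si{\alpha_2}{\beta_2}$, invoking the first paragraph whenever one of these intervals is negative. Hence $a_{i,j} \le a_{i,j+1}$, and the underlying pattern is an ordinary Gelfand-Tsetlin pattern.

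Next I establish the two bullet conditions in both directions. If in the AGTP we have $a_{i,j} = a_{i+1,j}$ with $\dec(a_{i+1,j}) \in \{\nearrow,\nenwarrow\}$, then $\alpha_1 = a_{i+1,j}+1 > a_{i,j}$, so $\si{\alpha_1}{\beta_1}$ must be negative, which by the first paragraph forces $a_{i+1,j+1} = a_{i+1,j}$ and $\dec(a_{i+1,j+1}) \in \{\nwarrow,\nenwarrow\}$; this is the first bullet, and the second follows by the symmetric argument. Conversely, given an ordinary GT pattern whose decorations satisfy both bullets, a routine case split on $(\dec(b),\dec(c))$ and on the values of $b, c$ shows $a_{i,j}$ always lies in $\si{\alpha}{\beta}$, because the bullets rule out precisely the configurations that would otherwise produce an empty signed interval. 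The sign of the AGTP is $(-1)$ to the number of negative intervals, which by the first paragraph is exactly the number of positions $(i,j)$ with $a_{i,j} = a_{i+1,j} = a_{i+1,j+1}$, $\dec(a_{i+1,j}) \in \{\nearrow,\nenwarrow\}$, and $\dec(a_{i+1,j+1}) \in \{\nwarrow,\nenwarrow\}$, matching the formula in the proposition.

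The main obstacle is the bookkeeping in the case analysis, especially checking that the two bullet conditions capture all configurations where the signed-interval constraint is not automatically equivalent to the ordinary GT inequality $a_{i+1,j} \le a_{i,j} \le a_{i+1,j+1}$; once this is disposed of, both the equivalence and the sign statement are mechanical.
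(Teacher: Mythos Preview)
Your proposal is correct and follows essentially the same route as the paper's proof: a local case analysis of the signed interval $\si{b+\epsilon_1}{c-\epsilon_2}$ under the hypothesis $b\le c$, followed by an (implicit in the paper, explicit in yours) induction up the rows, and then reading off the bullet conditions and the sign from the unique negative case $b=c$, $\epsilon_1=\epsilon_2=1$. Your notation $\epsilon_1,\epsilon_2,\alpha,\beta$ makes the bookkeeping cleaner, but the argument is the same.
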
 

\begin{proof} 
Suppose $(a_{i,j})_{1 \le j \le i \le n}$ is an AGTP. If $a_{i+1,j} < a_{i+1,j+1}$ for particular $i,j$, then $a_{i+1,j} \le a_{i,j} \le a_{i+1,j+1}$. The first inequality has to be strict if the decoration of $a_{i+1,j}$ contains an arrow pointing towards $a_{i,j}$ (i.e., $\dec(a_{i+1,j}) \in \{\nearrow,\nenwarrow\}$), while the second inequality has to be strict if $a_{i+1,j+1}$ contains an arrow pointing towards $a_{i,j}$ (i.e., $\dec(a_{i+1,j+1}) \in \{\nwarrow,\nenwarrow\}$).  

On the other hand, if $a_{i+1,j} = a_{i+1,j+1}$ for particular $i,j$, then $a_{i+1,j}=a_{i,j}=a_{i+1,j+1}$. In this case 
$$
(\dec(a_{i+1,j}),\dec(a_{i+1,j+1})) \in \{\emptyset,\nwarrow\} \times \{\emptyset,\nearrow\}
$$
or 
\begin{equation}
\label{sign} 
(\dec(a_{i+1,j}),\dec(a_{i+1,j+1})) \in \{\nearrow,\nenwarrow\} \times \{\nwarrow,\nenwarrow\},
\end{equation}
where in the second case there is a contribution of $-1$ to the sign of the object.

These observations imply that, if the bottom row is weakly increasing, then the underlying undecorated triangular array is an ordinary Gelfand-Tsetlin pattern and that the properties on the decoration stated in the proposition are satisfied. The only instance when we have a contribution to the sign is in the case of \eqref{sign}. 

Conversely, a decoration of a given Gelfand-Tsetlin pattern that follows the rule as given in the statement of the proposition is eligible for an arrowed Gelfand-Tsetlin pattern according to 
Definition~\ref{def:AGTP}.
\end{proof} 

\begin{remark} 
In the case that the bottom row of an arrowed Gelfand-Tsetlin pattern is strictly increasing and we forbid the decoration $\emptyset$, we have that all rows are strictly increasing and we obtain a monotone triangle. Recall that monotone triangles are defined as Gelfand-Tsetlin patterns with strictly increasing rows; their significance comes from the fact that monotone triangles with bottom row $1,2,\ldots,n$ are in easy bijective correspondence with $n \times n$ alternating sign matrices, see, e.g., \cite{Bre99}. In such a case, there is no instance where we gain a $-1$ that contributes to the sign. These objects were used in \cite{nASMDPP} to study alternating sign matrices. Among other things, the generating function of these decorated monotone triangles can be interpreted as a generating function of (undecorated) monotone triangles, thus of alternating sign matrices.
\end{remark} 

The following explicit formula for the generating function of arrowed Gelfand-Tsetlin patterns with fixed bottom row $k_1,k_2,\ldots,k_n$ is proved in
\cite{nASMDPP}. 

\begin{theorem} 
\label{robbins}
The generating function of arrowed Gelfand-Tsetlin patterns with bottom row $k_1,\ldots,k_n$ is 
$$
\prod_{i=1}^{n} (t + u X_i + v X_i^{-1} + w)  
\prod_{1 \le i < j \le n} \left( t  + u  \e_{k_i} + v  \e_{k_j}^{-1} + w \e_{k_i} \e_{k_j}^{-1} \right) 
s_{(k_n,k_{n-1},\ldots,k_1)}(X_1,\ldots,X_n), 
$$ 
where $\e_x$ denotes the shift operator, defined as $\e_x p(x) = p(x+1)$. 
\end{theorem}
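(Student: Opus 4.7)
The plan is to prove the identity by induction on $n$, peeling off the bottom row of an arrowed Gelfand-Tsetlin pattern (AGTP) and matching the contribution of its $n$ decorations to one new factor $(t + u X_n + v X_n^{-1} + w)$ together with $n - 1$ new shift-operator factors $(t + u \e_{k_i} + v \e_{k_n}^{-1} + w \e_{k_i} \e_{k_n}^{-1})$ for $1 \le i < n$. The base case $n = 1$ is immediate: the four possible decorations of the single entry $k_1$ contribute $(t + u X_1 + v X_1^{-1} + w)\, s_{(k_1)}(X_1)$, matching the right-hand side since the shift-operator product is empty.

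For the induction step, I would parametrize an AGTP with bottom row $(k_1, \ldots, k_n)$ by the decorations $\delta_1, \ldots, \delta_n$ of its bottom entries together with an upper AGTP of size $n - 1$ whose bottom row $(l_1, \ldots, l_{n-1})$ satisfies $l_j \in \si{k_j + \epsilon_j^+}{k_{j+1} - \epsilon_{j+1}^-}$, where $\epsilon_j^+ = 1$ iff $\dec(k_j) \in \{\nearrow, \nenwarrow\}$ and $\epsilon_j^- = 1$ iff $\dec(k_j) \in \{\nwarrow, \nenwarrow\}$. The crucial observation is that~\eqref{schurextension} extends the Schur polynomial interpretation to arbitrary integer bottom rows using signed intervals, so that the induction hypothesis remains valid and the sum over $(l_1, \ldots, l_{n-1})$ is well-defined even when some of the signed intervals are negative, with sign contributions absorbed into the Schur polynomial. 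The heart of the argument is then to evaluate this signed sum via a telescoping identity such as $\sum_{l \in \si{a}{b}} X^l = (X^a - X^{b+1})/(1 - X)$ and to combine the result with the Pieri-type branching identity
\[
s_{(k_n, \ldots, k_1)}(X_1, \ldots, X_n) = \sum_{(l_1, \ldots, l_{n-1})} X_n^{\sum_j k_j - \sum_j l_j}\, s_{(l_{n-1}, \ldots, l_1)}(X_1, \ldots, X_{n-1})
\]
(extended with signs to arbitrary integer sequences). After this combination, the $n$ decoration weights on the bottom row together with the inductively obtained operators on the $l$-indices reassemble into the desired operator product acting on $s_{(k_n, \ldots, k_1)}$ in $n$ variables.

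The main obstacle is the delicate bookkeeping required to reorganize the $n$ bottom-row decoration choices (indexed by single positions $j$) into one new $X_n$-factor $(t + u X_n + v X_n^{-1} + w)$ together with $n - 1$ new shift operators indexed by pairs $(i, n)$, while simultaneously converting the inductively produced operators in $\e_{l_i}$ into operators in $\e_{k_i}$ via the summation over the $l$'s; this requires tracking how adjacent decoration constraints interact through the signed intervals and how the resulting boundary contributions spread across all pairs $(i, n)$, not only adjacent ones. A conceptually cleaner route, which is presumably the one taken in~\cite{nASMDPP}, is to introduce a transfer-operator formalism in which each local decoration choice corresponds to one of four elementary operators acting on a Schur-polynomial generating function, and to verify directly that the global composition of these local operators coincides with the operator product on the right-hand side.
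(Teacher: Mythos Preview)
The paper does not actually prove Theorem~\ref{robbins}: it is quoted as a result from \cite{nASMDPP} (see the sentence immediately preceding the theorem statement). So there is no ``paper's own proof'' to compare against here; the theorem is used as a black box.

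As for your sketch itself: the inductive strategy of peeling off the bottom row and invoking the extended branching rule \eqref{schurextension} is the natural one, and your identification of the base case and of the signed-interval summation is correct. However, you have correctly put your finger on the real difficulty without resolving it. The bottom-row decorations are naturally indexed by single positions $j$, and each decoration only constrains the \emph{adjacent} entries $l_{j-1}$ and $l_j$ of the row above; yet the operator product you must produce involves \emph{all} pairs $(i,n)$, not just the adjacent pair $(n-1,n)$. Explaining why a purely local interaction reorganises into a product over all pairs is exactly the content of the theorem, and your proposal does not supply that mechanism --- you describe the obstacle and then defer to \cite{nASMDPP}. In short, what you have written is an accurate outline of where the work lies rather than a proof, which you yourself acknowledge in the final paragraph.
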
 

The formula has to be applied as follows: First interpret  $k_1,\ldots,k_n$ as variables and apply the operator $\prod_{1 \le i < j \le n} \left( t  + u  \e_{k_i} + v  \e_{k_j}^{-1} + w \e_{k_i} \e_{k_j}^{-1} \right)$ to $s_{(k_n,k_{n-1},\ldots,k_1)}(X_1,\ldots,X_n)$. This will result in a linear combination of expressions of the form 
$s_{(k_n+i_n,k_{n-1}+i_{n-1},\ldots,k_1+i_1)}(X_1,\ldots,X_n)$ for some (varying) integers $i_j$. The $k_j$ are only specialized to the actual integers after that. Note that we do not necessarily have $k_n+i_n \ge k_{n-1}+i_{n-1} \ge \ldots \ge k_1+i_1$ even if $k_n \ge k_{n-1} \ge \ldots \ge k_1$, so that the extension of the Schur polynomial in \eqref{schurextension} is necessary. 

\medskip

\begin{example}
We illustrate the theorem on the example $(k_1,k_2,k_3)=(1,2,3)$. We list the $8$ Gelfand-Tsetlin pattern with bottom row $1,2,3$ and indicate 
the possible decorations (one will be listed twice with a disjoint set of decorations), where $L=\{\emptyset, \nwarrow~\}$, $R=\{\emptyset, \nearrow\}$ and 
$LR= \{\emptyset,\nwarrow,\nearrow,\nenwarrow\}$, and on the right we indicate the 
generating function restricted to the particular underlying Gelfand-Tsetlin patterns with the indicated decorations, where we use 
$$
L(X)=t+ v X^{-1}, R(X)=t+u X \quad \text{and} \quad  LR(X) = t + u X + v X^{-1} + w.
$$

\begin{tabular}{cl} 
$
\begin{array}{ccccc}
&& {LR \atop 1} && \\ 
& {L \atop 1} && {LR \atop 2}  & \\
{L \atop 1} && {L \atop 2} && {LR \atop 3}  
\end{array} 
$
&  
$X_1 X_2^2 X_3^3 LR(X_1) L(X_2) LR(X_2) L(X_3)^2 LR(X_3)$ \vspace{3mm} \\
$
\begin{array}{ccccc}
&& {LR \atop 2} && \\ 
& {LR \atop 1} && {R \atop 2}  & \\
{L \atop 1} && {L \atop 2} && {LR \atop 3}  
\end{array} 
$
& 
$X_1^2 X_2 X_3^3 LR(X_1) LR(X_2) R(X_2) L(X_3)^2 LR(X_3)$ \vspace{3mm} \\
$
\begin{array}{ccccc}
&& {LR \atop 1} && \\ 
& {L \atop 1} && {LR \atop 3}  & \\
{L \atop 1} && {LR \atop 2} && {R \atop 3}  
\end{array} 
$
& 
$X_1 X_2^3 X_3^2 LR(X_1) L(X_2) LR(X_2) L(X_3) LR(X_3) R(X_3)$ \vspace{3mm} \\
$
\begin{array}{ccccc}
&& {LR \atop 2} && \\ 
& {LR \atop 1} && {LR \atop 3}  & \\
{L \atop 1} && {LR \atop 2} && {R \atop 3}  
\end{array} 
$
& 
$X_1^2 X_2^2 X_3^2 LR(X_1) LR(X_2)^2 L(X_3) LR(X_3) R(X_3)$ \vspace{3mm} \\
$
\begin{array}{ccccc}
&& {LR \atop 3} && \\ 
& {LR \atop 1} && {R \atop 3}  & \\
{L \atop 1} && {LR \atop 2} && {R \atop 3}  
\end{array} 
$
& 
$X_1^3 X_2 X_3^2 LR(X_1) LR(X_2) R(X_2) L(X_3) LR(X_3) R(X_3)$ \vspace{3mm} \\
$
\begin{array}{ccccc}
&& {LR \atop 2} && \\ 
& {L \atop 2} && {LR \atop 3}  & \\
{LR \atop 1} && {R \atop 2} && {R \atop 3}  
\end{array} 
$
& 
$X_1^2 X_2^3 X_3 LR(X_1) L(X_2) LR(X_2) LR(X_3) R(X_3)^2$ \vspace{3mm} \\
$
\begin{array}{ccccc}
&& {LR \atop 3} && \\ 
& {LR \atop 2} && {R \atop 3}  & \\
{LR \atop 1} && {R \atop 2} && {R \atop 3}  
\end{array} 
$
& 
$X_1^3 X_2^2 X_3 LR(X_1) LR(X_2) R(X_2) LR(X_3) R(X_3)^2$ \vspace{3mm} \\
$
\begin{array}{ccccc}
&& {LR \atop 2} && \\ 
& {L \atop 2} && {R \atop 2}  & \\
{LR \atop 1} && {\emptyset  \atop 2} && {LR \atop 3}  
\end{array} 
$
& 
$X_1^2 X_2^2 X_3^2 LR(X_1) L(X_2) R(X_2) t LR(X_3)^2 $ \vspace{3mm} \\
$
\begin{array}{ccccc}
&& {LR \atop 2} && \\ 
& {\{ \nearrow, \nenwarrow \} \atop 2} && { \{ \nwarrow, \nenwarrow \} \atop 2}  & \\
{LR \atop 1} && {\emptyset  \atop 2} && {LR \atop 3}  
\end{array} 
$
& 
$-X_1^2 X_2^2 X_3^2 LR(X_1) (w+uX_2)(w+v X_2^{-1}) t LR(X_3)^2 $ \vspace{3mm} \\
\end{tabular} 
\end{example}

It is convenient for us to rewrite the formula from Theorem~\ref{robbins} as follows.

\begin{cor}
\label{bialternant}
The generating function of arrowed Gelfand-Tsetlin patterns with bottom row $k_1,\ldots,k_n$ is 
\begin{equation}
\label{asym}
\frac{\asym_{X_1,\ldots,X_n} \left[ \prod_{1 \le i \le j \le n} \left(v  + w X_i + t X_j + u X_i X_j  \right) \prod_{i=1}^{n} X_i^{k_i-1} \right] }{\prod_{1 \le i < j \le n} (X_j - X_i)}.
\end{equation}
\end{cor}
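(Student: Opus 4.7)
The plan is to deduce this corollary directly from Theorem~\ref{robbins} by rewriting everything in terms of the bialternant formula. First I would use that, after applying the permutation $i \mapsto n+1-i$ (whose sign $(-1)^{\binom{n}{2}}$ is exactly compensated by the sign flip $\prod_{i<j}(X_i-X_j)=(-1)^{\binom{n}{2}}\prod_{i<j}(X_j-X_i)$ in the denominator), one has
\begin{equation*}
s_{(k_n,k_{n-1},\ldots,k_1)}(X_1,\ldots,X_n) = \frac{\asym_{X_1,\ldots,X_n}\bigl[\prod_{i=1}^n X_i^{k_i+i-1}\bigr]}{\prod_{1 \le i < j \le n}(X_j-X_i)}.
\end{equation*}

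Next I would push the shift operators inside the asymmetrizer. Expanding $\asym[\prod_i X_i^{k_i+i-1}]=\sum_\sigma \sgn\sigma\, \prod_i X_{\sigma(i)}^{k_i+i-1}$, the operator $\e_{k_i}$ acts on the summand indexed by $\sigma$ simply by multiplication by $X_{\sigma(i)}$. Consequently, applying the operator $\prod_{i<j}(t+u\e_{k_i}+v\e_{k_j}^{-1}+w\e_{k_i}\e_{k_j}^{-1})$ and re-summing over $\sigma$ produces
\begin{equation*}
\asym_{X_1,\ldots,X_n}\Bigl[\prod_{1 \le i < j \le n}(t+uX_i+vX_j^{-1}+wX_iX_j^{-1}) \prod_{l=1}^n X_l^{k_l+l-1}\Bigr].
\end{equation*}
Since multiplication by a symmetric function commutes past $\asym$, the symmetric prefactor $\prod_i(t+uX_i+vX_i^{-1}+w)$ from Theorem~\ref{robbins} can be pulled inside as well.

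Finally I would clean up using the elementary factorizations
\begin{equation*}
t+uX+vX^{-1}+w = X^{-1}(v+wX+tX+uX^2), \quad t+uX_i+vX_j^{-1}+wX_iX_j^{-1} = X_j^{-1}(v+wX_i+tX_j+uX_iX_j),
\end{equation*}
so that the big product inside the asym becomes
$\prod_{1\le i\le j\le n}(v+wX_i+tX_j+uX_iX_j)\cdot\prod_{i=1}^n X_i^{-i}$.
Here the diagonal $i=j$ contributes one $X_i^{-1}$ per index, and for each $j$ the $j-1$ off-diagonal factors contribute $j-1$ further copies of $X_j^{-1}$, totalling $X_i^{-i}$. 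Combining with the monomial already present gives $X_l^{k_l+l-1}\cdot X_l^{-l}=X_l^{k_l-1}$ for each $l$, and \eqref{asym} drops out.

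This is essentially a bookkeeping argument and I expect no substantive obstacle; the only places where a mistake could slip in are the sign incurred by reversing $(k_n,\ldots,k_1)$ to $(k_1,\ldots,k_n)$ in the bialternant, and the careful collection of the auxiliary powers of $X_i$ that arise when clearing the $X_i^{-1}$ and $X_j^{-1}$ factors. Both amount to routine verifications.
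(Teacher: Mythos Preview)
Your proposal is correct and follows essentially the same route as the paper: both rewrite the Schur polynomial via the bialternant $\asym[\prod_i X_i^{k_i+i-1}]/\prod_{i<j}(X_j-X_i)$, push the shift operators inside the antisymmetrizer to turn $\e_{k_i},\e_{k_j}^{-1}$ into multiplication by $X_i,X_j^{-1}$, absorb the symmetric prefactor, and then clear the negative powers of $X_j$ to assemble $\prod_{i\le j}(v+wX_i+tX_j+uX_iX_j)\prod_i X_i^{k_i-1}$. Your write-up is in fact slightly more explicit than the paper's about the sign bookkeeping and the power-counting in the final simplification.
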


\begin{proof} 
Observe that 
\begin{align*}
& \prod_{i=1}^{n} (t + u  X_i + v  X_i^{-1}+w) \prod_{1 \le i < j \le n} \left(t+ u  \e_{k_i} + v  \e_{k_j}^{-1} + w  \e_{k_i} \e_{k_j}^{-1} \right) 
s_{(k_n,k_{n-1},\ldots,k_1)}(X_1,\ldots,X_n) \notag \\
\quad &= \prod_{i=1}^{n} (u  X_i + v  X_i^{-1}+w) \prod_{1 \le i < j \le n} \left(t +  u  \e_{k_i} + v  \e_{k_j}^{-1} + w  \e_{k_i} \e_{k_j}^{-1} \right)
\frac{\asym_{X_1,\ldots,X_n} \left[ \prod_{i=1}^{n} X_i^{k_i+i-1} \right] }{\prod_{1 \le i < j \le n} (X_j - X_i)} \notag \\ 
\quad &= \prod_{i=1}^{n} (t + u  X_i + v  X_i^{-1}+w)  \frac{\asym_{X_1,\ldots,X_n} \left[ \prod_{1 \le i < j \le n} \left(t +  u  \e_{k_i} + v  \e_{k_j}^{-1} + w  \e_{k_i} \e_{k_j}^{-1} \right) \prod_{i=1}^{n} X_i^{k_i+i-1} \right] }{\prod_{1 \le i < j \le n} (X_j - X_i)} \notag \\ 
\quad &= \prod_{i=1}^{n} (t + u  X_i + v  X_i^{-1}+w) \frac{\asym_{X_1,\ldots,X_n} \left[ \prod_{1 \le i <  j \le n} \left(t +  u X_i  + v  X_j^{-1} + w  X_i X_j^{-1} \right) \prod_{i=1}^{n} X_i^{k_i+i-1} \right] }{\prod_{1 \le i < j \le n} (X_j - X_i)} \notag \\
\quad &=  \frac{\asym_{X_1,\ldots,X_n} \left[ \prod_{1 \le i \le j \le n} \left(v  + w X_i + t X_j + u X_i X_j  \right) \prod_{i=1}^{n} X_i^{k_i-1} \right] }{\prod_{1 \le i < j \le n} (X_j - X_i)}
\end{align*} 
and the assertion follows.
\end{proof} 

\begin{remark} 
Suppose $(k_1-1,k_2-1,\ldots,k_n-1)$ is a partition (allowing zero parts) then, when setting $u=v=0$, $w=1$ and replacing $t$ by $-t$ \eqref{asym}, we obtain the Hall-Littlewood polynomials \cite{macdonald} up to a factor that is a rational function in $t$. 
\end{remark}

\subsection{Generating function with respect to a Schur polynomial weight}

We are now ready to obtain our first interpretation. Multiplying \eqref{littlewoodASM2} and \eqref{rincluded} with $\prod_{i=1}^{n} (X_i^{-1} + 1+w + X_i)$ gives 
\begin{multline} 
\label{littlewoodASM3} 
\frac{\asym_{X_1,\ldots,X_n} \left[ \prod_{1 \le i \le  j \le n} (1+ w X_i + X_j + X_i X_j) \sum_{0 \le k_1 < k_2 < \ldots < k_n} X_1^{k_1-1} X_2^{k_2-1} \cdots X_n^{k_n-1} \right]}{\prod_{1 \le i <  j \le n} (X_j-X_i)} \\ = 
\prod_{i=1}^{n} (X_i^{-1} + 1+w + X_i) \prod_{i=1}^{n} \frac{1}{1-X_i} \prod_{1 \le i < j \le n}  \frac{1+X_i + X_j + w X_i X_j}{1-X_i X_j},
\end{multline} 
and 
\begin{multline} 
\label{rincluded1} 
\frac{\asym_{X_1,\ldots,X_n} \left[  \prod_{1 \le i \le j \le n} (1+w X_i+X_j + X_i X_j) 
\sum_{0 \le k_1 < k_2 < \ldots < k_n \le m} X_1^{k_1-1} X_2^{k_2-1} \cdots X_n^{k_n-1} \right]}{\prod_{1 \le i <  j \le n} (X_j-X_i)}  \\ = 
\prod_{i=1}^{n} (X_i^{-1} + 1+w + X_i) \\ \times
\frac{\det_{1 \le i, j \le n} \left( X_i^{j-1}  (1+X_i)^{j-1} (1+ w X_i)^{n-j}  - X_i^{m+2n-j} (1+X_i^{-1})^{j-1}  (1+w X_i^{-1})^{n-j} \right)}{\prod\limits_{i=1}^n (1-X_i) \prod\limits_{1 \le i < j \le n} (1-X_i X_j)(X_j-X_i)}, 
\end{multline} 
respectively, and we can now interpret the left-hand sides as the generating function of arrowed Gelfand-Tsetlin patterns with non-negative strictly increasing bottom row, where we need to specialize $t=u=v=1$ in the weight and in the second case the entries in 
the bottom row are less than or equal to $m$.

\begin{remark}
\begin{enumerate} 
\item 
For $\mathbf{X}=(X_1,\ldots,X_n)$, let $\mathcal{AGTP}(t,u,v,w;\mathbf{k};\mathbf{X})$ denote the generating function of arrowed Gelfand-Tsetlin patterns with bottom row $\mathbf{k}=(k_1,\ldots,k_n)$. Then, using \eqref{asym}, it follows by changing 
$(X_1,\ldots,X_n)$ to $(X_n,X_{n-1},\ldots,X_1)$ that 
$$
\mathcal{AGTP}(t,u,v,w;\mathbf{k};\mathbf{X}) = (-1)^{\binom{n}{2}} 
\mathcal{AGTP}(w,u,v,t;\overline{\mathbf{k}};\mathbf{X}),
$$
where $\overline{\mathbf{k}}=(k_n,\ldots,k_1)$. Therefore, the left-hand sides are up to the sign $(-1)^{\binom{n}{2}}$ also the generating function of AGTPs with strictly \emph{decreasing} bottom row of non-negative integers, where we need to set $u=v=w=1$ and replace $t$ by $w$ in the weight, and, in the case of \eqref{rincluded}, the entries in the bottom row are less than or equal to $m$. 
\item For the case $t=0$, there is worked out a possibility in \cite{nASMDPP} 
to get around the multiplication with the extra factor $\prod_{i=1}^{n} (X_i^{-1} + 1+w + X_i)$ by working with ``down arrows'' as decorations. In our application, this can be used in combination with our second combinatorial interpretation concerning AGTPs with strictly decreasing bottom row to give combinatorial interpretations of the left-hand sides of \eqref{littlewoodASM2} and \eqref{rincluded} in the special case $w=0$. 
It is an open problem to explore whether the down-arrowed array can be extended to general $t$.
\end{enumerate}
\end{remark} 

In Appendix~\ref{furtherLHS}, we develop some other (maybe less interesting) combinatorial interpretations of the left-hand sides, which we include for the sake of completeness.

\section{Combinatorial interpretations of the right-hand sides of \eqref{littlewoodASM3} and  
\eqref{rincluded1}} 
\label{RHS} 

\subsection{Right-hand side of \eqref{littlewoodASM3}} 
 For the right-hand side of \eqref{littlewoodASM3}, which is
\begin{equation} 
\label{RHSsimple} 
 \prod_{i=1}^{n} \frac{X_i^{-1}+1+w+X_i}{1-X_i} \prod_{1 \le i < j \le n}  \frac{1+X_i + X_j + w X_i X_j}{1-X_i X_j},
\end{equation} 
it is straightforward to give a combinatorial interpretation as a generating function.
Recall that, in the ordinary case \eqref{littlewood}, the right-hand side $\prod_{i=1}^{n} \frac{1}{1-X_i} \prod_{1 \le i < j \le n}  \frac{1}{1-X_i X_j}$ is interpreted as two-line arrays with entries in $\{1,2,\ldots,n\}$, ordered lexicographically, with the top element of each  
column being greater than or equal to its bottom element. The exponent of $X_i$ in the weight is computed by subtracting from the total number of $i$'s in the two-line array the number of columns with $i$ as top and bottom element. 

To extend this to an interpretation of 
\eqref{RHSsimple}, we have one additional column $\binom{j}{i}$ for all pairs $i \le j$, which are either overlined, underlined, both or neither. 
An overlined column $\binom{j}{i}$ with $i<j$, contributes an additional multiplicative $X_j$ to the weight, while an underlined column with $i$ as bottom element contributes an additional $X_i$, and if a column is overlined and underlined then such a column contributes, in addition to $X_i X_j$, $w$. Moreover, an overlined column $\binom{i}{i}$ contributes an additional $X_i$ to the weight and if it is underlined then it contributes $X_i^{-1}$ to the weight, and, again, if the column is overlined and underlined, then it contributes also $w$. In both cases, if the column is neither underlined nor overlined, it contributes nothing in addition.
 
\subsection{Right-hand side of \eqref{rincluded1}}

The following theorem provides an interpretation of the right-hand side of  \eqref{rincluded1} as a weighted count of (partly non-intersecting) lattice paths. This right-hand side differs from the right-hand 
side of \eqref{rincluded} by a simple multiplicative factor. We work as long as possible with general $w$, however, it will turn out that we need to specialize to $w=0,1$ at some point to obtain a nicer interpretation. We present two different proofs to obtain the result, where the second one is only sketched. 

Figure~\ref{examplepaths} seeks to illustrate the theorem in the case that $m$ is odd.

\begin{theorem}
\label{RHScomplicated}
(1) Assume that $m=2l+1$. Then the right-hand side of \eqref{rincluded1} has the following interpretation as weighted count of families of $n$ lattice paths. 
\begin{itemize} 
\item The $i$-th lattice path starts in one point in the set $A_i=\{(-3i+1,-i+1),(-i+1,-3i+1)\}$, $i=1,2,\ldots,n$, and the end points of the paths are $E_j=(n-j+l+1,j-l-2)$, $j=1,2,\ldots,n$. 
\item Below and on the line $x+y=0$, the step set is $\{(1,1),(-1,1)\}$ for steps that start in $(-3i+1,-i+1)$ 
and it is $\{(1,1),(1,-1)\}$ for steps that start in $(-i+1,-3i+1)$. Steps of type $(-1,1)$ and $(1,-1)$ with distance 
$0,2,4,\ldots$ from $x+y=0$ are equipped with the weights $X_1,X_2,X_3,\ldots$, respectively, while such steps
with 
distance $1,3,5,\ldots$ are equipped with the weights $X_1^{-1},X_2^{-1},X_3^{-1},\ldots$, respectively. 
\item Above the line 
$x+y=0$, the step set is $\{(1,0),(0,1)\}$. 
 Above the line $x+y=j-1$, horizontal steps of the path that ends in $E_j$ are 
equipped with the weight $w$.
\item The paths can be assumed to be non-intersecting below the line $x+y=0$. In case $w=1$, we can also assume them 
to be non-intersecting above the line $x+y=0$. In case $w=0$, $E_j$ can be replaced by $E'_j=(n-j+l+1,2j-n-l-2)$, $j=1,2,\ldots,n$, and then we can also assume the paths to be non-intersecting above the line $x+y=0$. 
\item The sign of family of paths is the sign of the permutation $\sigma$ with the property that the $i$-th path connects $A_i$ to $E_{\sigma(i)}$ with an extra contribution of $-1$ if we choose $(-i+1,-3i+1)$ from $A_i$. Moreover, we have an overall factor of 
$$
(-1)^{\binom{n+1}{2}} \prod_{i=1}^{n} X_i^{l} (X_i^{-1}+1+w + X_i)(1+X_i).
$$
\item In case $w=0,1$, when restricting to non-intersecting paths, let $1 \le i_1 < i_2,\ldots < i_m < n$ be the indices for which we chose 
$(-3i+1,-i+1)$ from $A_i$. Then the sign can assumed to be $(-1)^{i_1+\ldots+i_m}$ and the overall factor is
$$
\prod_{i=1}^{n} X_i^{l} (X_i^{-1}+1+w + X_i)(1+X_i).
$$
\end{itemize} 
(2) Assume that $m=2l$. Then, to obtain an interpretation for the right-hand side of \eqref{rincluded1}, we only need to replace $E_j$ by a set of two possible endpoints $E_j=\{(n-j+l+1,j-l-2),(n-j+l,j-l-1)\}$. The overall factor is 
$$
(-1)^{\binom{n+1}{2}} \prod_{i=1}^{n} X_i^{l} (X_i^{-1}+1+w + X_i)
$$
in the case when we do not specialize $w$. 
The endpoints are replaced by $E'_j=\{(n-j+l+1,2j-n-l-2),(n-j+l,2j-n-l-1)\}$ if $w=0$. 
In case $w=0,1$ if we restrict to non-intersecting paths and the sign is taken care of as above, then the overall factor is 
$$
\prod_{i=1}^{n} X_i^{l} (X_i^{-1}+1+w + X_i).
$$
\end{theorem}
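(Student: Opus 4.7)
The plan is to split the right-hand side of \eqref{rincluded1} into three pieces --- the determinantal numerator, the Vandermonde $\prod_{i<j}(X_j-X_i)$ in the denominator, and the classical Littlewood factor $\prod_i(1-X_i)^{-1}\prod_{i<j}(1-X_iX_j)^{-1}$ --- and to interpret each piece as a weighted lattice path count. These three models then glue along the cut $x+y=0$: the Littlewood factor produces the diagonal portion below the cut, the determinant (together with the Vandermonde) produces the axis-aligned portion above it via Lindström--Gessel--Viennot, and the prefactor $\prod_i(X_i^{-1}+1+w+X_i)$ is absorbed as a small decoration at the cut.

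First I would read each of the two summands
\[
X_i^{j-1}(1+X_i)^{j-1}(1+wX_i)^{n-j},\qquad X_i^{m+2n-j}(1+X_i^{-1})^{j-1}(1+wX_i^{-1})^{n-j}
\]
of the $(i,j)$-entry of the determinant as a path generating function above the cut, with step set $\{(1,0),(0,1)\}$ ending at $E_j=(n-j+l+1,j-l-2)$. Indeed, any path of length $n-1$ from the cut to $E_j$ breaks into $j-1$ steps in the lower strip $0\le x+y\le j-1$ and $n-j$ steps in the upper strip $x+y\ge j-1$; the binomial factors $(1+X_i)^{j-1}$ and $(1+wX_i)^{n-j}$ record the choice of horizontal vs.\ vertical step in each strip, with the extra factor $w$ marking horizontal steps in the upper strip. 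The monomials $X_i^{j-1}$ and $X_i^{m+2n-j}$ then record by parity the number of $X_i$-weighted diagonal steps already taken along the below-the-cut portion to arrive at the respective starting point $(-3i+1,-i+1)$ or its reflection $(-i+1,-3i+1)$ across $y=x$; the sign between the two summands is precisely the $(-1)$ for choosing the reflected start, and the substitution $X_i\leftrightarrow X_i^{-1}$ between them reflects the geometry.

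Multiplying by $\prod_{i<j}(X_j-X_i)^{-1}$ and antisymmetrizing the determinant, Lindström--Gessel--Viennot on the upper half $\{x+y\ge 0\}$ delivers a (signed) count of path families with starts distributed along the cut and ends at the $E_j$'s; the sign of the matching permutation $\sigma$ is exactly the one prescribed by the theorem. The remaining factor $\prod_i(1-X_i)^{-1}\prod_{i<j}(1-X_iX_j)^{-1}$ is, by the RSK-based bijection recalled in Appendix~\ref{combclassical}, the generating function for pairs of weakly increasing two-line arrays; reading each two-line array as a lattice path below the cut with step set $\{(1,1),(-1,1)\}$ or $\{(1,1),(1,-1)\}$ according to the reflection, and distributing the diagonal-step weights $X_k^{\pm 1}$ by distance from $x+y=0$ as prescribed by the indexing, turns this factor into exactly the below-the-cut portion of the claimed model. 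Gluing the two halves along the cut and absorbing the prefactor $\prod_i(X_i^{-1}+1+w+X_i)(1+X_i)$ as a free decoration on the single step that spans the cut produces the overall factor in part (1) of the theorem.

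The main obstacle I expect is the non-intersection condition above $x+y=0$ for general $w$. When $w\notin\{0,1\}$ the horizontal-step weight changes across the line $x+y=j-1$, and this switching level depends on the endpoint of its path, so the path generating functions do not decouple into counts against a uniform weight landscape and the naive LGV involution only yields a signed configuration; this is exactly why the theorem only claims a genuinely non-intersecting model for $w\in\{0,1\}$. For $w=1$ all horizontal steps carry weight $X_i$ irrespective of strip, the entry factors as $(1+X_i)^{n-1}$ times a monomial, and standard LGV applies. For $w=0$ horizontal steps in the upper strip are forbidden, which I would encode by shortening each endpoint to $E'_j=(n-j+l+1,2j-n-l-2)$. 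The even case $m=2l$ differs by a parity shift: the $X_i^{m+2n-j}$ prefactor is no longer paired with an unmatched $(1+X_i)$, which corresponds combinatorially to leaving the last step of each path free; I would record this by replacing each endpoint $E_j$ by the pair $\{(n-j+l+1,j-l-2),(n-j+l,j-l-1)\}$ and dropping the $\prod_i(1+X_i)$ from the overall factor, exactly as in the statement.
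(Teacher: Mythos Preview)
Your decomposition into ``determinant = above the cut'' and ``Littlewood factor = below the cut'' does not line up with the actual variable structure of the path model, and this is a genuine gap.  In the theorem's weighting, the steps \emph{above} $x+y=0$ carry no $X_i$-weights at all (only the constant $1$ or $w$), while the diagonal portion \emph{below} the cut from $A_i$ is weighted by the full list $X_1^{\pm1},\ldots,X_i^{\pm1}$.  So the $(i,j)$-entry of the determinant, which is a Laurent polynomial in the single variable $X_i$, cannot encode either piece on its own: it is not a single-variable above-the-cut path (those carry no $X$), and it is not the multi-variable below-the-cut path either.  In particular your claim that $(1+X_i)^{j-1}(1+wX_i)^{n-j}$ records horizontal/vertical steps above the cut with $X_i$-weight on horizontal steps contradicts the statement you are trying to prove.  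Likewise, your proposal to realise $\prod_i(1-X_i)^{-1}\prod_{i<j}(1-X_iX_j)^{-1}$ as the below-the-cut portion via the RSK correspondence of Appendix~\ref{combclassical} produces symmetric-matrix/two-line-array objects, not $n$ separate diagonal paths whose $i$-th member uses only $X_1^{\pm1},\ldots,X_i^{\pm1}$; there is no mechanism offered for that conversion, and ``gluing'' does not supply one.

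What the paper actually does is quite different and hinges on a change of variables you do not mention.  One first observes that
\[
\prod_{1\le i<j\le n}(X_j-X_i)(1-X_iX_j)
\;=\;
\prod_{i} X_i^{\,n-1}\prod_{1\le i<j\le n}\bigl((X_j+X_j^{-1})-(X_i+X_i^{-1})\bigr),
\]
so the denominator $\prod_i(1-X_i)\prod_{i<j}(X_j-X_i)(1-X_iX_j)$ is, up to monomials and the factor $\prod_i(1-X_i)$, the Vandermonde in the variables $Y_i=X_i+X_i^{-1}$.  After pulling out $\prod_i X_i^{l+n}$ from the determinant, each entry becomes (up to the factor $X_i-X_i^{-1}$, which absorbs the remaining $\prod_i(1-X_i)$) a polynomial in $Y_i$.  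A Jacobi--Trudi type lemma (Lemma~\ref{limit}) then turns the ratio of determinant to Vandermonde-in-$Y$ into a new determinant whose $(i,j)$-entry is a combination of $h_k(Y_1,\ldots,Y_i)$; a second identity (Lemma~\ref{h}) collapses these to $h_k(X_1,X_1^{-1},\ldots,X_i,X_i^{-1})$.  Only at this point does each entry become a genuine lattice-path generating function --- the $h_k(X_1,X_1^{-1},\ldots,X_i,X_i^{-1})$ counts the diagonal portion from $A_i$ to the cut, and the attached binomial coefficients $\binom{j-1}{p}\binom{n-j}{q}w^{n-j-q}$ count the unit-step portion from the cut to $E_j$ --- and Lindstr\"om--Gessel--Viennot can be applied to $\det(a_{i,j})$.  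Your proposal skips this entire passage from single-variable entries to multi-variable ones, and without it the argument does not go through.
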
 

We discuss the weight and the sign on the example in Figure~\ref{examplepaths}. The weights that come from the 
individual paths are 
$$
X_1^{-1} \cdot X_1^{-1} \cdot X_2 \cdot X_1 X_3 \cdot X_2 X_3^{-1} \cdot X_5^{-2},  
$$
where the factors are arranged in a manner that the $i$-th factor is the weight of the path that starts in the set 
$A_i$. To compute the sign, observe that $\sigma = (6 \, 5 \, 4 \, 3 \, 2 \, 1)$ in one-line notation so that 
$\sgn \sigma = -1$ and that we choose the second starting point in $A_i$ except for $i=1$, so that the total sign 
is $(-1)\cdot(-1)^5=1$.


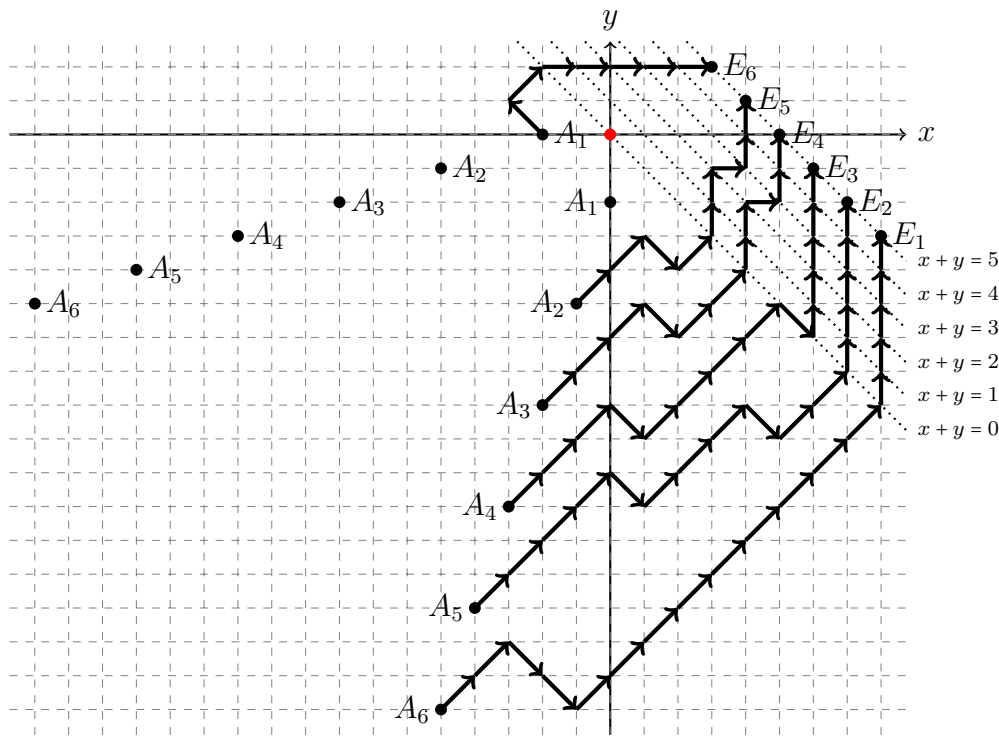
\begin{figure}[htb]
	\centering
	\begin{tikzpicture}[scale=.45,baseline=(current bounding box.center)]	
	        \draw[->,thick] (-17.75,0)--(8.75,0) node[right]{$x$};
		\draw[->,thick] (0,-17.75)--(0,2.75) node[above]{$y$};
		\draw[dotted,thick] (-2.75,2.75)--(8.75,-8.75) node[right]{\tiny $x+y=0$};
		\draw[dotted,thick] (-1.75,2.75)--(8.75,-7.75) node[right]{\tiny $x+y=1$};
		\draw[dotted,thick] (-0.75,2.75)--(8.75,-6.75) node[right]{\tiny $x+y=2$};
		\draw[dotted,thick] (0.25,2.75)--(8.75,-5.75) node[right]{\tiny $x+y=3$};
		\draw[dotted,thick] (1.25,2.75)--(8.75,-4.75) node[right]{\tiny $x+y=4$};
		\draw[dotted,thick] (2.25,2.75)--(8.75,-3.75) node[right]{\tiny $x+y=5$};
	         \draw [help lines,step=1cm,dashed] (-17.75,-17.75) grid (8.75,2.75);
		\fill[red] (0,0) circle (5pt);
		\fill (-2,0) circle (5pt) node[right]{$A_1$};
		\fill (-5,-1) circle (5pt) node[right]{$A_2$};
		\fill (-8,-2) circle (5pt) node[right]{$A_3$};
		\fill (-11,-3) circle (5pt) node[right]{$A_4$};
		\fill (-14,-4) circle (5pt) node[right]{$A_5$};
		\fill (-17,-5) circle (5pt) node[right]{$A_6$};
		
		\fill (0,-2) circle (5pt) node[left]{$A_1$};
		\fill (-1,-5) circle (5pt) node[left]{$A_2$};
		\fill (-2,-8) circle (5pt) node[left]{$A_3$};
		\fill (-3,-11) circle (5pt) node[left]{$A_4$};
		\fill (-4,-14) circle (5pt) node[left]{$A_5$};
		\fill (-5,-17) circle (5pt) node[left]{$A_6$};
		
		\fill (8,-3) circle (5pt) node[right]{$E_1$};
		\fill (7,-2) circle (5pt) node[right]{$E_2$};
		\fill (6,-1) circle (5pt) node[right]{$E_3$};
		\fill (5,0) circle (5pt) node[right]{$E_4$};
		\fill (4,1) circle (5pt) node[right]{$E_5$};
		\fill (3,2) circle (5pt) node[right]{$E_6$};
		
		\path[decoration=arrows, decorate] (-2,0) --++ (-1,1) --++ (1,1) 
		--++ (1,0) --++ (1,0) --++ (1,0) --++ (1,0) --++ (1,0);
		
		\path[decoration=arrows, decorate] (-1,-5) --++ (1,1) --++ (1,1) --++ (1,-1) --++ (1,1)
		--++ (0,1) --++ (0,1) --++ (1,0) --++ (0,1) --++ (0,1);
		
		\path[decoration=arrows, decorate] (-2,-8) --++ (1,1) --++ (1,1) --++ (1,1) --++ (1,-1)
		--++ (1,1) --++ (1,1) --++ (0,1) --++ (0,1) --++ (1,0)--++(0,1)--++(0,1);
		
		\path[decoration=arrows, decorate] (-3,-11) --++ (1,1) --++ (1,1) --++ (1,1) --++ (1,-1)
		--++ (1,1) --++ (1,1) --++ (1,1) --++ (1,1)--++ (1,-1) --++ (0,1) --++ (0,1)--++(0,1)--++(0,1)
		--++(0,1);

		\path[decoration=arrows, decorate] (-4,-14) --++ (1,1) --++ (1,1) --++ (1,1) --++ (1,1)
		--++ (1,-1) --++ (1,1) --++ (1,1) --++ (1,1)--++ (1,-1)  --++ (1,1) --++ (1,1)--++ (0,1) --++ (0,1)--++(0,1)
		--++(0,1)--++(0,1);
		
		\path[decoration=arrows, decorate] (-5,-17) --++ (1,1) --++ (1,1) --++ (1,-1) --++ (1,-1)
		--++ (1,1) --++ (1,1) --++ (1,1) --++ (1,1)--++ (1,1)  --++ (1,1) --++ (1,1)--++ (1,1) --++ (1,1)
		--++ (0,1) --++ (0,1)--++(0,1)
		--++(0,1)--++(0,1);
				
	\end{tikzpicture}
	\caption{\label{examplepaths} An example of families of lattice paths in Theorem~\ref{RHScomplicated}.}
\end{figure}

In the case that $m$ is odd, we always need to choose the second lattice point in $A_i$ if $l \ge n-2$ because then all $E_i$ have a non-positive $y$-coordinate and this implies that they cannot be reached by any of the first 
lattice points in $A_i$ since any lattice path starting from the first lattice point in $A_i$ intersects the line $x+y=0$ in a lattice point with positive $y$-coordinate. This implies that, in the non-intersecting case, the sign is always $1$. In the case that $m$ is even, the condition is $l \ge n-1$. 

In theses cases and when we have in addition $w=0$, we can translate the lattice paths easily into pairs of plane partitions. 
The case $m=2l+1$ is illustrated in Figure~\ref{odd1}, while the case $m=2l$ is illustrated in Figure~\ref{even}. A similar 
result can in principal be derived for the case $w=1$, but we omit this here.

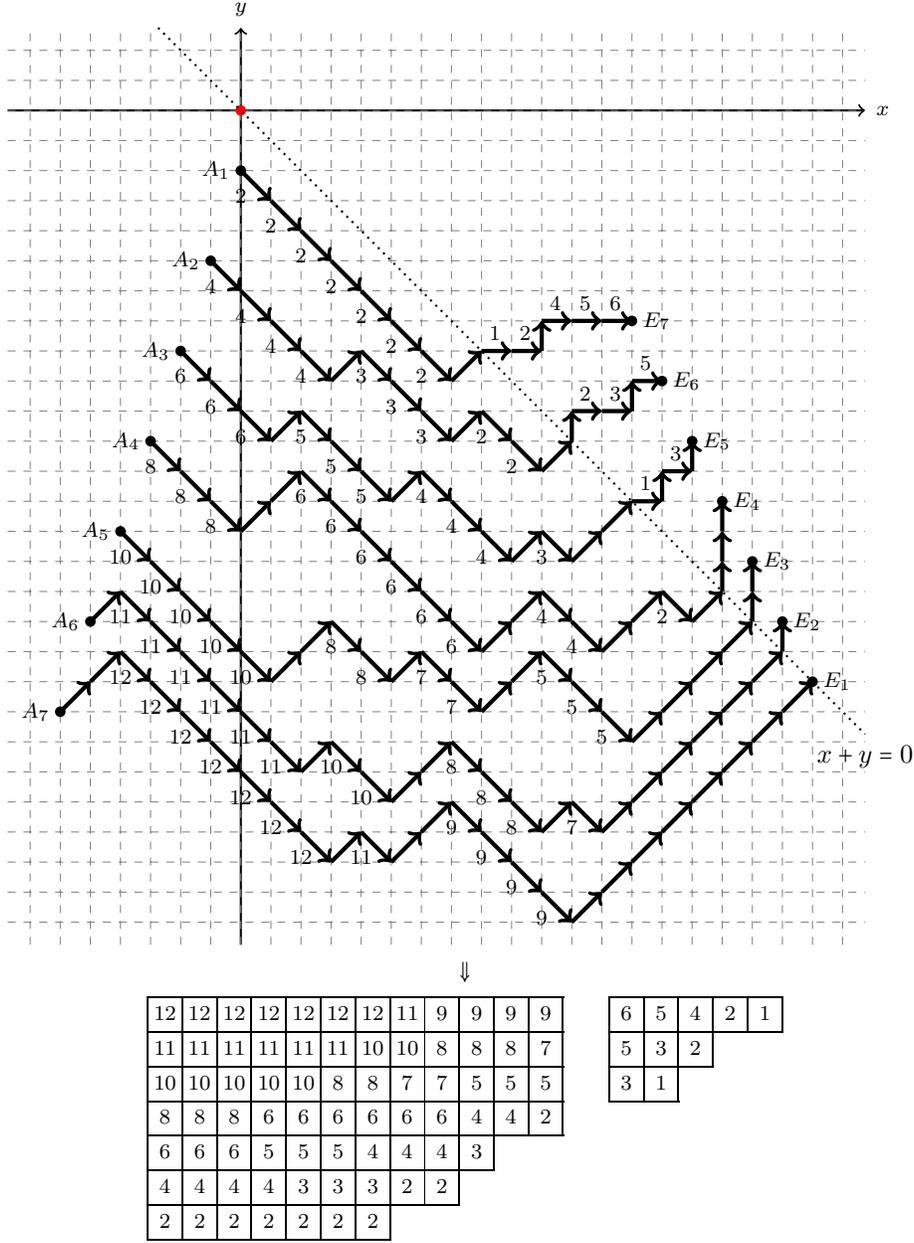
\begin{figure}
\begin{tikzpicture}[scale=.40,baseline=(current bounding box.center)]	
	        \draw[->,thick] (-7.75,0)--(20.75,0) node[right]{\tiny$x$};
		\draw[->,thick] (0,-27.75)--(0,2.75) node[above]{\tiny$y$};
	         \draw [help lines,step=1cm,dashed] (-7.75,-27.75) grid (20.75,2.75);
		\fill[red] (0,0) circle (5pt);
		\draw[dotted,thick] (-2.75,2.75)--(20.75,-20.75) node[below]{\footnotesize $x+y=0$};
		\fill (0,-2) circle (5pt) node[left]{\tiny$A_1$};
		\fill (-1,-5) circle (5pt) node[left]{\tiny$A_2$};
		\fill (-2,-8) circle (5pt) node[left]{\tiny$A_3$};
		\fill (-3,-11) circle (5pt) node[left]{\tiny$A_4$};
		\fill (-4,-14) circle (5pt) node[left]{\tiny$A_5$};
		\fill (-5,-17) circle (5pt) node[left]{\tiny$A_6$};
		\fill (-6,-20) circle (5pt) node[left]{\tiny$A_7$};
		
		\fill (19,-19) circle (5pt) node[right]{\tiny$E_1$};
		\fill (18,-17) circle (5pt) node[right]{\tiny$E_2$};
		\fill (17,-15) circle (5pt) node[right]{\tiny$E_3$};
		\fill (16,-13) circle (5pt) node[right]{\tiny$E_4$};
		\fill (15,-11) circle (5pt) node[right]{\tiny$E_5$};
		\fill (14,-9) circle (5pt) node[right]{\tiny$E_6$};
		\fill (13,-7) circle (5pt) node[right]{\tiny$E_7$};
		
		\path[decoration=arrows, decorate] (0,-2)node[below=0.1cm]{\tiny$2$} --++ (1,-1)node[below=0.1cm]{\tiny$2$} --++ 
		(1,-1)node[below=0.1cm]{\tiny$2$} --++ (1,-1)node[below=0.1cm]{\tiny$2$} --++ (1,-1)node[below=0.1cm]{\tiny$2$}
		--++ (1,-1)node[below=0.1cm]{\tiny$2$} --++ (1,-1)node[below=0.1cm]{\tiny$2$} --++ (1,-1)--++(1,1)
		--++ (1,0)node[above left]{\tiny$1$}--++
		(1,0)node[above left]{\tiny$2$}
		--++(0,1)--++
		(1,0) node[above left]{\tiny$4$}--++(1,0)
		node[above left]{\tiny$5$}
		--++(1,0) node[above left]{\tiny$6$};
		
		\path[decoration=arrows, decorate] (-1,-5)node[below=0.1cm]{\tiny$4$} --++ (1,-1)node[below=0.1cm]{\tiny$4$}
		--++ (1,-1)node[below=0.1cm]{\tiny$4$}--++ (1,-1)node[below=0.1cm]{\tiny$4$} --++ (1,-1)
		--++ (1,1)node[below=0.1cm]{\tiny$3$} --++ (1,-1)node[below=0.1cm]{\tiny$3$} --++ (1,-1)node[below=0.1cm]{\tiny$3$} 
		--++(1,-1) --++ (1,1)node[below=0.1cm]{\tiny$2$}--++(1,-1)node[below=0.1cm]{\tiny$2$}--++(1,-1)--++(1,1)--++(0,1)
		--++(1,0)node[above left]{\tiny$2$}--++(1,0)node[above left]{\tiny$3$}--++(0,1)--++(1,0)node[above left]{\tiny$5$};
		
		\path[decoration=arrows, decorate] (-2,-8)node[below=0.1cm]{\tiny$6$} --++ (1,-1)node[below=0.1cm]{\tiny$6$} --++ (1,-1)node[below=0.1cm]  {\tiny$6$}--++ (1,-1)--++ (1,1)node[below=0.1cm]{\tiny$5$}
		--++ (1,-1)node[below=0.1cm]{\tiny$5$} --++ (1,-1)node[below=0.1cm]{\tiny$5$} --++ (1,-1)--++(1,1)node[below=0.1cm]{\tiny$4$} --++ (1,-1)node[below=0.1cm]{\tiny$4$}--++(1,-1)node[below=0.1cm]{\tiny$4$}--++(1,-1)--++(1,1)node[below=0.1cm]{\tiny$3$}--++
		(1,-1)--++(1,1)--++(1,1)--++(1,0)node[above left]{\tiny$1$}
		--++(0,1)--++(1,0)node[above left]{\tiny$3$}
		--++(0,1);
		
		\path[decoration=arrows, decorate] (-3,-11)node[below=0.1cm]{\tiny$8$}  --++ (1,-1)node[below=0.1cm]{\tiny$8$} --++ (1,-1)node[below=0.1cm]{\tiny$8$} --++ (1,-1)--++ (1,1)
		--++ (1,1)node[below=0.1cm]{\tiny$6$} --++ (1,-1)node[below=0.1cm]{\tiny$6$} --++ (1,-1)node[below=0.1cm]{\tiny$6$}--++(1,-1)node[below=0.1cm]{\tiny$6$} --++ (1,-1)node[below=0.1cm]{\tiny$6$}--++(1,-1)node[below=0.1cm]{\tiny$6$}--++(1,-1)--++(1,1)--++
		(1,1)node[below=0.1cm]{\tiny$4$}--++(1,-1)node[below=0.1cm]{\tiny$4$}--++(1,-1)--++(1,1)--++(1,1)node[below=0.1cm]{\tiny$2$}
		--++(1,-1)--++(1,1)--++(0,1)--++(0,1)--++(0,1);
				
		\path[decoration=arrows, decorate] (-4,-14)node[below=0.1cm]{\tiny$10$} --++(1,-1)node[below=0.1cm]{\tiny$10$}--++(1,-1)node[below=0.1cm]  {\tiny$10$}--++(1,-1)node[below=0.1cm]{\tiny$10$}--++(1,-1)node[below=0.1cm]{\tiny$10$}--++
		(1,-1)--++(1,1)--++(1,1)node[below=0.1cm]{\tiny$8$}--++(1,-1)node[below=0.1cm]{\tiny$8$}--++(1,-1)--++(1,1)node[below=0.1cm]{\tiny$7$}--++(1,-1)node[below=0.1cm]{\tiny$7$}--++(1,-1)--++(1,1)--++(1,1)node[below=0.1cm]{\tiny$5$}
		--++(1,-1)node[below=0.1cm]{\tiny$5$}--++(1,-1)node[below=0.1cm]{\tiny$5$}--++(1,-1)--++(1,1)--++(1,1)--++(1,1)--++(1,1)--++(0,1)--++(0,1);
		
		\path[decoration=arrows, decorate] (-5,-17)--++(1,1)node[below=0.1cm]{\tiny$11$}--++(1,-1)node[below=0.1cm]{\tiny$11$}--++
		(1,-1)node[below=0.1cm]{\tiny$11$}--++(1,-1)node[below=0.1cm]{\tiny$11$}--++
		(1,-1)node[below=0.1cm]{\tiny$11$}--++(1,-1)node[below=0.1cm]{\tiny$11$}--++(1,-1)--++(1,1)node[below=0.1cm]{\tiny$10$}--++(1,-1)node[below=0.1cm]{\tiny$10$}--++(1,-1)--++(1,1)--++(1,1)node[below=0.1cm]{\tiny$8$}--++(1,-1)node[below=0.1cm]{\tiny$8$}--++
		(1,-1)node[below=0.1cm]{\tiny$8$}--++(1,-1)--++(1,1)node[below=0.1cm]{\tiny$7$}--++(1,-1)--++(1,1)--++(1,1)--++(1,1)--++(1,1)--++(1,1)--++(1,1)
		--++(0,1);
          
                \path[decoration=arrows, decorate] (-6,-20)--++(1,1)--++(1,1)node[below=0.1cm]{\tiny$12$}--++(1,-1)node[below=0.1cm]{\tiny$12$}--++(1,-1)node[below=0.1cm]{\tiny$12$}--++(1,-1)node[below=0.1cm]{\tiny$12$}
                --++(1,-1)node[below=0.1cm]{\tiny$12$}--++(1,-1)node[below=0.1cm]{\tiny$12$}--++(1,-1)node[below=0.1cm]{\tiny$12$}--++(1,-1)--++(1,1)node[below=0.1cm]{\tiny$11$}--++(1,-1)--++(1,1)--++(1,1)node[below=0.1cm]{\tiny$9$}--++(1,-1)node[below=0.1cm]{\tiny$9$}--++
                (1,-1)node[below=0.1cm]{\tiny$9$}--++(1,-1)node[below=0.1cm]{\tiny$9$}--++(1,-1)--++(1,1)--++(1,1)--++(1,1)--++(1,1)--++(1,1)--++(1,1)--++(1,1)--++(1,1);
\end{tikzpicture}
\tiny
$$
\Downarrow
$$
$$
\begin{ytableau}
12 & 12 & 12 & 12 & 12 & 12 & 12 & 11 & 9 & 9 & 9 & 9 \\ 
11 & 11 & 11 & 11 & 11 & 11 & 10 & 10 & 8 & 8 & 8 & 7 \\
10 & 10 & 10 & 10 & 10 & 8 & 8 & 7 & 7 & 5 & 5 & 5 \\
8 & 8 & 8 & 6 & 6 & 6 & 6 & 6 & 6 & 4 & 4 & 2 \\
6 & 6 & 6 & 5 & 5 & 5 & 4 & 4 & 4 & 3 \\
4 & 4 & 4 & 4 & 3 & 3 & 3 & 2 & 2 \\
2 & 2 & 2 & 2 & 2 & 2 & 2 
\end{ytableau} \qquad 
\begin{ytableau}
6 & 5 & 4 & 2 & 1  \\
5 & 3 & 2  \\
3 & 1 
\end{ytableau}	
$$
\caption{\label{odd1} Illustration of Corollary~\ref{signless} (1) for $n=7$ and $l=12$.}   
\end{figure} 

\begin{figure}
\begin{tikzpicture}[scale=.40,baseline=(current bounding box.center)]	
	        \draw[->,thick] (-7.75,0)--(20.75,0) node[right]{\tiny$x$};
		\draw[->,thick] (0,-27.75)--(0,2.75) node[above]{\tiny$y$};
	         \draw [help lines,step=1cm,dashed] (-7.75,-27.75) grid (20.75,2.75);
		\fill[red] (0,0) circle (5pt);
		\draw[dotted,thick] (-2.75,2.75)--(20.75,-20.75) node[below]{\footnotesize $x+y=0$};
		\fill (0,-2) circle (5pt) node[left]{\tiny$A_1$};
		\fill (-1,-5) circle (5pt) node[left]{\tiny$A_2$};
		\fill (-2,-8) circle (5pt) node[left]{\tiny$A_3$};
		\fill (-3,-11) circle (5pt) node[left]{\tiny$A_4$};
		\fill (-4,-14) circle (5pt) node[left]{\tiny$A_5$};
		\fill (-5,-17) circle (5pt) node[left]{\tiny$A_6$};
		\fill (-6,-20) circle (5pt) node[left]{\tiny$A_7$};
		
		\fill (20,-20) circle (5pt) node[right]{\tiny$E_1$};
		\fill (19,-18) circle (5pt) node[right]{\tiny$E_2$};
		\fill (18,-16) circle (5pt) node[right]{\tiny$E_3$};
		\fill (16,-13) circle (5pt) node[right]{\tiny$E_4$};
		\fill (15,-11) circle (5pt) node[right]{\tiny$E_5$};
		\fill (14,-9) circle (5pt) node[right]{\tiny$E_6$};
		\fill (13,-7) circle (5pt) node[right]{\tiny$E_7$};
		
		\path[decoration=arrows, decorate] (0,-2)node[below=0.1cm]{\tiny$2$} --++ (1,-1)node[below=0.1cm]{\tiny$2$} --++ 
		(1,-1)node[below=0.1cm]{\tiny$2$} --++ (1,-1)node[below=0.1cm]{\tiny$2$} --++ (1,-1)node[below=0.1cm]{\tiny$2$}
		--++ (1,-1)node[below=0.1cm]{\tiny$2$} --++ (1,-1)node[below=0.1cm]{\tiny$2$} --++ (1,-1)--++(1,1)
		--++ (1,0)node[above left]{\tiny$1$}--++
		(1,0)node[above left]{\tiny$2$}
		--++(0,1)--++
		(1,0) node[above left]{\tiny$4$}--++(1,0)
		node[above left]{\tiny$5$}
		--++(1,0) node[above left]{\tiny$6$};
		
		\path[decoration=arrows, decorate] (-1,-5)node[below=0.1cm]{\tiny$4$} --++ (1,-1)node[below=0.1cm]{\tiny$4$}
		--++ (1,-1)node[below=0.1cm]{\tiny$4$}--++ (1,-1)node[below=0.1cm]{\tiny$4$} --++ (1,-1)
		--++ (1,1)node[below=0.1cm]{\tiny$3$} --++ (1,-1)node[below=0.1cm]{\tiny$3$} --++ (1,-1)node[below=0.1cm]{\tiny$3$} 
		--++(1,-1) --++ (1,1)node[below=0.1cm]{\tiny$2$}--++(1,-1)node[below=0.1cm]{\tiny$2$}--++(1,-1)--++(1,1)--++(0,1)
		--++(1,0)node[above left]{\tiny$2$}--++(1,0)node[above left]{\tiny$3$}--++(0,1)--++(1,0)node[above left]{\tiny$5$};
		
		\path[decoration=arrows, decorate] (-2,-8)node[below=0.1cm]{\tiny$6$} --++ (1,-1)node[below=0.1cm]{\tiny$6$} --++ (1,-1)node[below=0.1cm]  {\tiny$6$}--++ (1,-1)--++ (1,1)node[below=0.1cm]{\tiny$5$}
		--++ (1,-1)node[below=0.1cm]{\tiny$5$} --++ (1,-1)node[below=0.1cm]{\tiny$5$} --++ (1,-1)--++(1,1)node[below=0.1cm]{\tiny$4$} --++ (1,-1)node[below=0.1cm]{\tiny$4$}--++(1,-1)node[below=0.1cm]{\tiny$4$}--++(1,-1)--++(1,1)node[below=0.1cm]{\tiny$3$}--++
		(1,-1)--++(1,1)--++(1,1)--++(1,0)node[above left]{\tiny$1$}
		--++(0,1)--++(1,0)node[above left]{\tiny$3$}
		--++(0,1);
		
		\path[decoration=arrows, decorate] (-3,-11)node[below=0.1cm]{\tiny$8$}  --++ (1,-1)node[below=0.1cm]{\tiny$8$} --++ (1,-1)node[below=0.1cm]{\tiny$8$} --++ (1,-1)--++ (1,1)
		--++ (1,1)node[below=0.1cm]{\tiny$6$} --++ (1,-1)node[below=0.1cm]{\tiny$6$} --++ (1,-1)node[below=0.1cm]{\tiny$6$}--++(1,-1)node[below=0.1cm]{\tiny$6$} --++ (1,-1)node[below=0.1cm]{\tiny$6$}--++(1,-1)node[below=0.1cm]{\tiny$6$}--++(1,-1)--++(1,1)--++
		(1,1)node[below=0.1cm]{\tiny$4$}--++(1,-1)node[below=0.1cm]{\tiny$4$}--++(1,-1)--++(1,1)--++(1,1)node[below=0.1cm]{\tiny$2$}
		--++(1,-1)--++(1,1)--++(0,1)--++(0,1)--++(0,1);
				
		\path[decoration=arrows, decorate] (-4,-14)node[below=0.1cm]{\tiny$10$} --++(1,-1)node[below=0.1cm]{\tiny$10$}--++(1,-1)node[below=0.1cm]  {\tiny$10$}--++(1,-1)node[below=0.1cm]{\tiny$10$}--++(1,-1)node[below=0.1cm]{\tiny$10$}--++
		(1,-1)--++(1,1)--++(1,1)node[below=0.1cm]{\tiny$8$}--++(1,-1)node[below=0.1cm]{\tiny$8$}--++(1,-1)--++(1,1)node[below=0.1cm]{\tiny$7$}--++(1,-1)node[below=0.1cm]{\tiny$7$}--++(1,-1)--++(1,1)--++(1,1)node[below=0.1cm]{\tiny$5$}
		--++(1,-1)node[below=0.1cm]{\tiny$5$}--++(1,-1)node[below=0.1cm]{\tiny$5$}--++(1,-1)--++(1,1)--++(1,1)--++(1,1)--++(1,1)--++(0,1)--++(1,0)node[above left]{\tiny$2$};
		
		\path[decoration=arrows, decorate] (-5,-17)--++(1,1)node[below=0.1cm]{\tiny$11$}--++(1,-1)node[below=0.1cm]{\tiny$11$}--++
		(1,-1)node[below=0.1cm]{\tiny$11$}--++(1,-1)node[below=0.1cm]{\tiny$11$}--++
		(1,-1)node[below=0.1cm]{\tiny$11$}--++(1,-1)node[below=0.1cm]{\tiny$11$}--++(1,-1)--++(1,1)node[below=0.1cm]{\tiny$10$}--++(1,-1)node[below=0.1cm]{\tiny$10$}--++(1,-1)--++(1,1)--++(1,1)node[below=0.1cm]{\tiny$8$}--++(1,-1)node[below=0.1cm]{\tiny$8$}--++
		(1,-1)node[below=0.1cm]{\tiny$8$}--++(1,-1)--++(1,1)node[below=0.1cm]{\tiny$7$}--++(1,-1)--++(1,1)--++(1,1)--++(1,1)--++(1,1)--++(1,1)--++(1,1)
		--++(1,0)node[above left]{\tiny$1$};
          
                \path[decoration=arrows, decorate] (-6,-20)--++(1,1)--++(1,1)node[below=0.1cm]{\tiny$12$}--++(1,-1)node[below=0.1cm]{\tiny$12$}--++(1,-1)node[below=0.1cm]{\tiny$12$}--++(1,-1)node[below=0.1cm]{\tiny$12$}
                --++(1,-1)node[below=0.1cm]{\tiny$12$}--++(1,-1)node[below=0.1cm]{\tiny$12$}--++(1,-1)node[below=0.1cm]{\tiny$12$}--++(1,-1)--++(1,1)node[below=0.1cm]{\tiny$11$}--++(1,-1)--++(1,1)--++(1,1)node[below=0.1cm]{\tiny$9$}--++(1,-1)node[below=0.1cm]{\tiny$9$}--++
                (1,-1)node[below=0.1cm]{\tiny$9$}--++(1,-1)node[below=0.1cm]{\tiny$9$}--++(1,-1)--++(1,1)--++(1,1)--++(1,1)--++(1,1)--++(1,1)--++(1,1)--++(1,1)--++(1,1)--++(1,-1);
\end{tikzpicture}
\tiny
$$
\Downarrow
$$
$$
\begin{ytableau}
12 & 12 & 12 & 12 & 12 & 12 & 12 & 11 & 9 & 9 & 9 & 9 \\ 
11 & 11 & 11 & 11 & 11 & 11 & 10 & 10 & 8 & 8 & 8 & 7 \\
10 & 10 & 10 & 10 & 10 & 8 & 8 & 7 & 7 & 5 & 5 & 5 \\
8 & 8 & 8 & 6 & 6 & 6 & 6 & 6 & 6 & 4 & 4 & 2 \\
6 & 6 & 6 & 5 & 5 & 5 & 4 & 4 & 4 & 3 \\
4 & 4 & 4 & 4 & 3 & 3 & 3 & 2 & 2 \\
2 & 2 & 2 & 2 & 2 & 2 & 2 
\end{ytableau} \qquad 
\begin{ytableau}
\none & 6 & 5 & 4 & 2 & 1  \\
\none & 5 & 3 & 2  \\
\none &3 & 1 \\
2 \\
1
\end{ytableau}	
$$
\caption{\label{even} Illustration of Corollary~\ref{signless} (1) for $n=7$ and $l=13$.}   
\end{figure}

\begin{cor} 
\label{signless} 
Let $w=0$. 

(1) Assume that $m=2l+1$. In case $l \ge n-2$, the right hand side of \eqref{rincluded1} is the generating function of plane partitions $(P,Q)$ of shapes $\lambda, \mu$, respectively, where   
$\mu$ is the complement of $\lambda$ in the $n \times l$-rectangle, 
$P$ is a column-strict plane partition such that the entries in the $i$-th row are bounded by 
$2n+2-2i$, and $Q$ is a row-strict plane partition of positive integers such that the entries in the $i$-th row are bounded by $n-i$. The weight is 
$$
\prod_{i=1}^n X_i^l (X_i^{-1} +1 + X_i)(1+X_i) X_i^{\# \text{ of } 2i-1 \text{ in } P} X_i^{- \, \# \text{ of } 2i \text{ in } P}. 
$$ 
(2) Assume that $m=2l$. In case $l \ge n-1$, the right-hand side of \eqref{rincluded1} is the generating function of plane partitions $(P,Q)$ of (straight) shape $\lambda$ and skew shape $\mu$, respectively, such that $\mu$ is 
the complement of $\lambda$ in the $n \times (l-1)$-rectangle after possibly deleting the first column of $\mu$, $P$ is a column strict plane partition such that the entries in the $i$-th row are bounded by $2n+2-2i$ and $Q$ is a row-strict plane partition such that the entries in the $i$-th row are bounded by $n-i$. The weight is 
$$
\prod_{i=1}^n X_i^l (X_i^{-1} +1 + X_i) X_i^{\# \text{ of } 2i-1 \text{ in } P} X_i^{- \, \# \text{ of } 2i \text{ in } P}. 
$$
\end{cor}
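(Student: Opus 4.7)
My plan is to deduce Corollary~\ref{signless} from Theorem~\ref{RHScomplicated} by converting, in the specialisation $w=0$, each non-intersecting family of lattice paths into a pair of plane partitions. I describe the argument for part (1), where $m=2l+1$, and then indicate the small modifications needed for part (2).

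\textbf{Reduction to the second starting points.} The hypothesis $l\ge n-2$ forces every endpoint $E'_j=(n-j+l+1,\,2j-n-l-2)$ to satisfy $y\le 0$. A path leaving the first lattice point $(-3i+1,-i+1)$ of $A_i$ uses the step set $\{(1,1),(-1,1)\}$ below $x+y=0$, and a short balance argument (both steps raise $y$ by $1$, and reaching $x+y=0$ from $x+y=-4i+2$ costs $2i-1$ up-diagonal steps contributing $2i-1$ to $y$) shows that the first point where such a path meets $x+y=0$ has $y\ge i\ge 1$. Above the line only $\{(1,0),(0,1)\}$ is allowed, so the endpoint would have $y\ge 1$, contradicting $y(E'_j)\le 0$. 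Hence every path must start at the second lattice point of $A_i$, the permutation $\sigma$ is forced to be the reversal $i\mapsto n+1-i$ (by comparing the geometric ordering of the $A_i$'s and the $E'_j$'s), and the sign bookkeeping in Theorem~\ref{RHScomplicated} collapses so that the overall factor is exactly $\prod_{i=1}^{n}X_i^{l}(X_i^{-1}+1+X_i)(1+X_i)$, as stated.

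\textbf{Splitting the paths and reading off a column-strict $P$.} I cut each path at its unique crossing $(c_i,-c_i)$ with $x+y=0$. The lower part of the $i$-th path runs from $A_i$ to $(c_i,-c_i)$ using steps $(1,1)$ and $(1,-1)$. Rotating the picture by $45^{\circ}$ (equivalently, recording at each integer distance $d=|x+y|/2$ the list of paths taking a $(1,-1)$-step there) turns the family of lower parts into a column-strict plane partition $P$: its row $n+1-i$ is the weakly decreasing sequence whose entries are $d+1$, one for each $(1,-1)$-step of path $i$ at distance $d$. Because path $i$ is supported on distances $\{0,1,\ldots,2i-1\}$, row $n+1-i$ has entries in $\{1,\ldots,2i\}$, matching the bound $2n+2-2(n+1-i)$. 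Non-intersection of the lower parts becomes strict decrease down each column of $P$. A $(1,-1)$-step at distance $2k-2$ (resp.\ $2k-1$) carries weight $X_k$ (resp.\ $X_k^{-1}$) and corresponds under the bijection to an entry $2k-1$ (resp.\ $2k$) in $P$; the total contribution of the lower parts is thus precisely $\prod_{i=1}^{n}X_i^{\#\{2i-1\text{ in }P\}}X_i^{-\#\{2i\text{ in }P\}}$.

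\textbf{Upper parts and the complementary shape $\mu$.} The upper parts form a non-intersecting family of up-right lattice paths from the crossings $(c_i,-c_i)$ to $E'_{n+1-i}$, which the standard Lindström--Gessel--Viennot bijection identifies with a row-strict plane partition $Q$ whose entries in row $i$ are bounded by $n-i$. A direct bookkeeping check matches the shape of $Q$ with the complement $\mu$ of $\lambda$ inside the $n\times l$ rectangle, using that the $x$-displacement from the $i$-th crossing to $E'_{n+1-i}$ equals $l-\lambda_{n+1-i}$, which is the length of the corresponding row of $\mu$. Because every step of the upper paths has weight $1$ when $w=0$, the partition $Q$ contributes only through its shape, and the constant prefactor is the one identified above. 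Part (2) is handled in the same way; the two alternative choices in $E'_j$ account for the optional first column of $\mu$ appearing in the statement.

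\textbf{Main obstacle.} The principal technical point is to verify the strict/weak inequalities produced by the rotation-plus-bijection in both $P$ and $Q$; in particular, one must show that the non-standard lower step set $\{(1,1),(1,-1)\}$ still yields the \emph{column-strict} condition on $P$, which requires a careful local analysis at distance levels where two consecutive paths briefly coincide. Matching the shape of $Q$ exactly to the complement of $\lambda$, including the optional first column in part~(2), is similarly delicate.
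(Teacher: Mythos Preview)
Your proposal is correct and follows essentially the same route as the paper's own proof: both arguments start from the $w=0$ case of Theorem~\ref{RHScomplicated}, observe that the hypothesis $l\ge n-2$ forces all paths to emanate from the second point of each $A_i$ (so that the sign collapses and the permutation is the reversal), then split each path at its crossing with $x+y=0$, encode the lower diagonal portions as the column-strict plane partition $P$ via the rule ``a $(1,-1)$-step at distance $d$ gives the entry $d+1$ in row $n+1-i$'', and encode the upper unit-step portions as the row-strict plane partition $Q$ whose $i$-th row records the positions of the horizontal steps; the complementarity $\mu_i+\lambda_{n+1-i}=l$ follows from the step-count identity $i+l-k_i=(l)-(k_i-i)$.

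Two minor remarks. First, your reduction argument is slightly sharper than the paper's (you actually carry out the $y$-coordinate balance showing $y\ge i\ge 1$ at the crossing, whereas the paper merely asserts unreachability). Second, your description of the upper-part bijection as ``the standard Lindstr\"om--Gessel--Viennot bijection'' is a bit loose; the paper spells it out concretely (the $j$-th step, if horizontal, contributes the entry $j$, filled right to left), and you should be equally explicit when verifying the row-strict and weakly-decreasing-column conditions that you flag in your ``Main obstacle'' paragraph. But the approach and all substantive steps coincide with the paper's.
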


\begin{proof}
We consider the case $m$ is odd. 
 Assume that $1 \le k_1 < k_2 < \ldots < k_n$ are chosen such that 
$(k_i,-k_i)$ is the last point in the intersection of the line $x+y=0$ with the path that connects $A_i$ to $E'_{n+1-i}$ when traversing the path from $A_i$ to $E'_{n+1-i}$. Note that the portion of the path from 
$A_i$ to $(k_i,-k_i)$ has $k_i-i$ steps of type $(1,-1)$ and $2i-1$ steps of type $(1,1)$. These portions correspond to the plane partition 
$P$ is follows:
The $i$-th path corresponds to the $(n+1-i)$-th row where the $(1,-1)$-steps correspond to the parts, where we fill the cells in the Ferrers diagram from left to right
when traversing the path from $A_i$ to $(k_i,-k_i)$, and a $(1,-1)$-step at distance $d$ from $x+y=0$ gives the entry $d+1$. It follows that the length of row $i$ is $k_{n+1-i}-n-1+i$ and that the entries in row $i$ are bounded by $2n+2-2i$. 

Now the portion of the path from $(k_i,-k_i)$ to $E'_{n+1-i}$ corresponds to the $i$-th row of the plane partition 
$Q$. More precisely, the horizontal steps correspond to the parts, where we fill the cells in the Ferrers diagram from right to left when 
traversing the path from $(k_i,-k_i)$ to $E'_{n+1-i}$, where the $j$-th step gives the entry  $j$. Note that there are $i-k_i+l$ steps of type $(1,0)$ in this portion, while there are $n-i$ steps in total, so that the length of the $i$-th row is $i-k_i+l$ and the entries in row $i$ are bounded by $n-i$.

The case $m$ is even is very similar and it is therefore omitted here.
\end{proof} 

\begin{remark}
(1) The plane partitions $P$ in the corollary are in easy bijection with symplectic tableaux as defined in \cite[Section~4]{KoiTer90}. Also the weight is up to an overall multiplicative factor essentially just the weight that is used for symplectic tableaux. As a consequence, the corollary can be interpreted as to provide the expansion of the generating function of arrowed Gelfand-Tsetlin into symplectic characters. This is in the vein of main results in \cite{TSPP} and in \cite[Remark 2.6]{nVASMDPP}.

(2) In the case $m$ is odd, the plane partitions $Q$ are in easy bijective correspondence with $2n \times 2n \times 2n$ totally symmetric self-complementary plane partitions. The bijection is provided in \cite[Remark 2.6]{nVASMDPP}. 
In the case $m$ is even, we place the part $n+1-i$ into the cell in the $i$-th row of the inner shape and that way we obtain plane partitions that are in easy bijective correspondence with $(2n+2) \times (2n+2) \times (2n+2)$ totally symmetric self-complementary plane partitions.
\end{remark}

\subsection{The cases $n=2$ and $m=2,3$} 
In this section, we give a list of all objects for the left-hand side and right-hand side of  \eqref{rincluded1} in the case $n=2$ and $m=2,3$. We start with the case that $m=3$, since this is easier on the right-hand side.

Note that $m=3$ implies $l=1$. The arrowed monotone triangles are as follows, using the notation from Section~\ref{LHS}. 

\begin{multline*}
\begin{array}{ccc}
& {LR \atop 0} & \\ 
{L \atop 0} && {LR \atop 1}   
\end{array}, 
\begin{array}{ccc}
& {LR \atop 1} & \\ 
{LR \atop 0} && {R \atop 1}   
\end{array},
\begin{array}{ccc}
& {LR \atop 0} & \\ 
{L \atop 0} && {LR \atop 2}   
\end{array},
\begin{array}{ccc}
& {LR \atop 1} & \\ 
{LR \atop 0} && {LR \atop 2}   
\end{array},
\begin{array}{ccc}
& {LR \atop 2} & \\ 
{LR \atop 0} && {R \atop 2}   
\end{array},
\begin{array}{ccc}
& {LR \atop 0} & \\ 
{L \atop 0} && {LR \atop 3}   
\end{array},\\
\begin{array}{ccc}
& {LR \atop 1} & \\ 
{LR \atop 0} && {LR \atop 3}   
\end{array},
\begin{array}{ccc}
& {LR \atop 2} & \\ 
{LR \atop 0} && {LR \atop 3}   
\end{array},
\begin{array}{ccc}
& {LR \atop 3} & \\ 
{LR \atop 0} && {R \atop 3}   
\end{array},
\begin{array}{ccc}
& {LR \atop 1} & \\ 
{L \atop 1} && {LR \atop 2}   
\end{array},
\begin{array}{ccc}
& {LR \atop 2} & \\ 
{LR \atop 1} && {R \atop 2}   
\end{array},
\begin{array}{ccc}
& {LR \atop 1} & \\ 
{L \atop 1} && {LR \atop 3}   
\end{array}, \\
\begin{array}{ccc}
& {LR \atop 2} & \\ 
{LR \atop 1} && {LR \atop 3}   
\end{array}, 
\begin{array}{ccc}
& {LR \atop 3} & \\ 
{L \atop 1} && {LR \atop 3}   
\end{array}, 
\begin{array}{ccc}
& {LR \atop 2} & \\ 
{L \atop 2} && {LR \atop 3}   
\end{array}, 
\begin{array}{ccc}
& {LR \atop 3} & \\
{LR \atop 2} && {R \atop 3}   
\end{array}
\end{multline*} 
The weights are 
\begin{multline} 
\label{first}
X_2(1+X_2^{-1}), 
X_1(1+X_2),
X_2^2(1+X_2^{-1}),
X_1 X_2(X_2^{-1}+1+w+X_2),
X_1^2(1+X_2),
X_2^3(1+X_2^{-1}),\\
X_1 X_2^2(X_2^{-1}+1+w+X_2),
X_1^2 X_2(X_2^{-1}+1+w+X_2), 
X_1^3(1+X_2), 
X_1 X_2^2(1+X_2^{-1}), 
X_1^2 X_2(1+X_2),\\
X_1 X_2^3 (1+X_2^{-1}),
X_1^2 X_2^2 (X_2^{-1} + 1 + w + X_2),
X_1^3 X_2 (1+X_2),
X_1^2 X_2^3(1+X_2^{-1}),
X_1^3 X_2^2(1+X_2),
\end{multline} 
up to the overall factor $LR(X_1) LR(X_2)$, setting $t=u=v=1$.

The corresponding paths from Theorem~\ref{RHScomplicated} are as follows.

\begin{tikzpicture}[scale=.45,baseline=(current bounding box.center)]	
	        \draw[->,thick] (-5.75,0)--(3.75,0) node[right]{$x$};
		\draw[->,thick] (0,-6.75)--(0,0.75) node[above]{$y$};
		\draw[dotted,thick] (-0.75,0.75)--(3.75,-3.75) node[right]{\tiny $x+y=0$};
		\draw[dotted,thick] (0.25,0.75)--(3.75,-2.75) node[right]{\tiny $x+y=1$};
	         \draw [help lines,step=1cm,dashed] (-5.75,-6.75) grid (3.75,0.75);
		\fill[red] (0,0) circle (5pt);
		\fill (-2,0) circle (5pt) node[right]{$A_1$};
		\fill (-5,-1) circle (5pt) node[right]{$A_2$};

		\fill (0,-2) circle (5pt) node[left]{$A_1$};
		\fill (-1,-5) circle (5pt) node[left]{$A_2$};
		
		\fill (3,-2) circle (5pt) node[right]{$E_1$};
		\fill (2,-1) circle (5pt) node[right]{$E_2$};
		
		\path[decoration=arrows, decorate] (0,-2) --++ (1,1) --++ (1,0);
		\path[decoration=arrows, decorate] (-1,-5) --++ (1,1) --++ (1,1) --++ (1,1) --++(1,0);
\end{tikzpicture}
\begin{tikzpicture}[scale=.45,baseline=(current bounding box.center)]	
	        \draw[->,thick] (-5.75,0)--(3.75,0) node[right]{$x$};
		\draw[->,thick] (0,-6.75)--(0,0.75) node[above]{$y$};
		\draw[dotted,thick] (-0.75,0.75)--(3.75,-3.75) node[right]{\tiny $x+y=0$};
		\draw[dotted,thick] (0.25,0.75)--(3.75,-2.75) node[right]{\tiny $x+y=1$};
	         \draw [help lines,step=1cm,dashed] (-5.75,-6.75) grid (3.75,0.75);
		\fill[red] (0,0) circle (5pt);
		\fill (-2,0) circle (5pt) node[right]{$A_1$};
		\fill (-5,-1) circle (5pt) node[right]{$A_2$};

		\fill (0,-2) circle (5pt) node[left]{$A_1$};
		\fill (-1,-5) circle (5pt) node[left]{$A_2$};
		
		\fill (3,-2) circle (5pt) node[right]{$E_1$};
		\fill (2,-1) circle (5pt) node[right]{$E_2$};
		
		\path[decoration=arrows, decorate] (0,-2) --++ (1,1) --++ (1,0);
		\path[decoration=arrows, decorate] (-1,-5) --++ (1,1) --++ (1,1) --++ (1,1) --++(1,-1)--++(0,1);
\end{tikzpicture}
\begin{tikzpicture}[scale=.45,baseline=(current bounding box.center)]	
	        \draw[->,thick] (-5.75,0)--(3.75,0) node[right]{$x$};
		\draw[->,thick] (0,-6.75)--(0,0.75) node[above]{$y$};
		\draw[dotted,thick] (-0.75,0.75)--(3.75,-3.75) node[right]{\tiny $x+y=0$};
		\draw[dotted,thick] (0.25,0.75)--(3.75,-2.75) node[right]{\tiny $x+y=1$};
	         \draw [help lines,step=1cm,dashed] (-5.75,-6.75) grid (3.75,0.75);
		\fill[red] (0,0) circle (5pt);
		\fill (-2,0) circle (5pt) node[right]{$A_1$};
		\fill (-5,-1) circle (5pt) node[right]{$A_2$};

		\fill (0,-2) circle (5pt) node[left]{$A_1$};
		\fill (-1,-5) circle (5pt) node[left]{$A_2$};
		
		\fill (3,-2) circle (5pt) node[right]{$E_1$};
		\fill (2,-1) circle (5pt) node[right]{$E_2$};
		
		\path[decoration=arrows, decorate] (0,-2) --++ (1,1) --++ (1,0);
		\path[decoration=arrows, decorate] (-1,-5) --++ (1,1) --++ (1,1) --++(1,-1)--++(1,1)--++(0,1);
\end{tikzpicture}
\begin{tikzpicture}[scale=.45,baseline=(current bounding box.center)]	
	        \draw[->,thick] (-5.75,0)--(3.75,0) node[right]{$x$};
		\draw[->,thick] (0,-6.75)--(0,0.75) node[above]{$y$};
		\draw[dotted,thick] (-0.75,0.75)--(3.75,-3.75) node[right]{\tiny $x+y=0$};
		\draw[dotted,thick] (0.25,0.75)--(3.75,-2.75) node[right]{\tiny $x+y=1$};
	         \draw [help lines,step=1cm,dashed] (-5.75,-6.75) grid (3.75,0.75);
		\fill[red] (0,0) circle (5pt);
		\fill (-2,0) circle (5pt) node[right]{$A_1$};
		\fill (-5,-1) circle (5pt) node[right]{$A_2$};

		\fill (0,-2) circle (5pt) node[left]{$A_1$};
		\fill (-1,-5) circle (5pt) node[left]{$A_2$};
		
		\fill (3,-2) circle (5pt) node[right]{$E_1$};
		\fill (2,-1) circle (5pt) node[right]{$E_2$};
		
		\path[decoration=arrows, decorate] (0,-2) --++ (1,1) --++ (1,0);
		\path[decoration=arrows, decorate] (-1,-5) --++ (1,1)--++(1,-1)--++(1,1)--++(1,1)--++(0,1);
\end{tikzpicture}
\begin{tikzpicture}[scale=.45,baseline=(current bounding box.center)]	
	        \draw[->,thick] (-5.75,0)--(3.75,0) node[right]{$x$};
		\draw[->,thick] (0,-6.75)--(0,0.75) node[above]{$y$};
		\draw[dotted,thick] (-0.75,0.75)--(3.75,-3.75) node[right]{\tiny $x+y=0$};
		\draw[dotted,thick] (0.25,0.75)--(3.75,-2.75) node[right]{\tiny $x+y=1$};
	         \draw [help lines,step=1cm,dashed] (-5.75,-6.75) grid (3.75,0.75);
		\fill[red] (0,0) circle (5pt);
		\fill (-2,0) circle (5pt) node[right]{$A_1$};
		\fill (-5,-1) circle (5pt) node[right]{$A_2$};

		\fill (0,-2) circle (5pt) node[left]{$A_1$};
		\fill (-1,-5) circle (5pt) node[left]{$A_2$};
		
		\fill (3,-2) circle (5pt) node[right]{$E_1$};
		\fill (2,-1) circle (5pt) node[right]{$E_2$};
		
		\path[decoration=arrows, decorate] (0,-2) --++ (1,1) --++ (1,0);
		\path[decoration=arrows, decorate] (-1,-5) --++ (1,-1)--++(1,1)--++(1,1)--++(1,1)--++(0,1);
		\end{tikzpicture}
\begin{tikzpicture}[scale=.45,baseline=(current bounding box.center)]	
	        \draw[->,thick] (-5.75,0)--(3.75,0) node[right]{$x$};
		\draw[->,thick] (0,-6.75)--(0,0.75) node[above]{$y$};
		\draw[dotted,thick] (-0.75,0.75)--(3.75,-3.75) node[right]{\tiny $x+y=0$};
		\draw[dotted,thick] (0.25,0.75)--(3.75,-2.75) node[right]{\tiny $x+y=1$};
	         \draw [help lines,step=1cm,dashed] (-5.75,-6.75) grid (3.75,0.75);
		\fill[red] (0,0) circle (5pt);
		\fill (-2,0) circle (5pt) node[right]{$A_1$};
		\fill (-5,-1) circle (5pt) node[right]{$A_2$};

		\fill (0,-2) circle (5pt) node[left]{$A_1$};
		\fill (-1,-5) circle (5pt) node[left]{$A_2$};
		
		\fill (3,-2) circle (5pt) node[right]{$E_1$};
		\fill (2,-1) circle (5pt) node[right]{$E_2$};
		
		\path[decoration=arrows, decorate] (0,-2) --++ (1,1)--++(1,-1) --++ (0,1);
		\path[decoration=arrows, decorate] (-1,-5) --++ (1,1) --++ (1,1) --++(1,-1)--++(1,1)--++(0,1);
\end{tikzpicture}
\begin{tikzpicture}[scale=.45,baseline=(current bounding box.center)]	
	        \draw[->,thick] (-5.75,0)--(3.75,0) node[right]{$x$};
		\draw[->,thick] (0,-6.75)--(0,0.75) node[above]{$y$};
		\draw[dotted,thick] (-0.75,0.75)--(3.75,-3.75) node[right]{\tiny $x+y=0$};
		\draw[dotted,thick] (0.25,0.75)--(3.75,-2.75) node[right]{\tiny $x+y=1$};
	         \draw [help lines,step=1cm,dashed] (-5.75,-6.75) grid (3.75,0.75);
		\fill[red] (0,0) circle (5pt);
		\fill (-2,0) circle (5pt) node[right]{$A_1$};
		\fill (-5,-1) circle (5pt) node[right]{$A_2$};

		\fill (0,-2) circle (5pt) node[left]{$A_1$};
		\fill (-1,-5) circle (5pt) node[left]{$A_2$};
		
		\fill (3,-2) circle (5pt) node[right]{$E_1$};
		\fill (2,-1) circle (5pt) node[right]{$E_2$};
		
		\path[decoration=arrows, decorate] (0,-2) --++ (1,1)--++(1,-1) --++ (0,1);
		\path[decoration=arrows, decorate] (-1,-5) --++ (1,1)--++(1,-1)--++(1,1)--++(1,1)--++(0,1);
\end{tikzpicture}
\begin{tikzpicture}[scale=.45,baseline=(current bounding box.center)]	
	        \draw[->,thick] (-5.75,0)--(3.75,0) node[right]{$x$};
		\draw[->,thick] (0,-6.75)--(0,0.75) node[above]{$y$};
		\draw[dotted,thick] (-0.75,0.75)--(3.75,-3.75) node[right]{\tiny $x+y=0$};
		\draw[dotted,thick] (0.25,0.75)--(3.75,-2.75) node[right]{\tiny $x+y=1$};
	         \draw [help lines,step=1cm,dashed] (-5.75,-6.75) grid (3.75,0.75);
		\fill[red] (0,0) circle (5pt);
		\fill (-2,0) circle (5pt) node[right]{$A_1$};
		\fill (-5,-1) circle (5pt) node[right]{$A_2$};

		\fill (0,-2) circle (5pt) node[left]{$A_1$};
		\fill (-1,-5) circle (5pt) node[left]{$A_2$};
		
		\fill (3,-2) circle (5pt) node[right]{$E_1$};
		\fill (2,-1) circle (5pt) node[right]{$E_2$};
		
		\path[decoration=arrows, decorate] (0,-2) --++ (1,1)--++(1,-1) --++ (0,1);
		\path[decoration=arrows, decorate] (-1,-5) --++ (1,-1)--++(1,1)--++(1,1)--++(1,1)--++(0,1);
		\end{tikzpicture}
\begin{tikzpicture}[scale=.45,baseline=(current bounding box.center)]	
	        \draw[->,thick] (-5.75,0)--(3.75,0) node[right]{$x$};
		\draw[->,thick] (0,-6.75)--(0,0.75) node[above]{$y$};
		\draw[dotted,thick] (-0.75,0.75)--(3.75,-3.75) node[right]{\tiny $x+y=0$};
		\draw[dotted,thick] (0.25,0.75)--(3.75,-2.75) node[right]{\tiny $x+y=1$};
	         \draw [help lines,step=1cm,dashed] (-5.75,-6.75) grid (3.75,0.75);
		\fill[red] (0,0) circle (5pt);
		\fill (-2,0) circle (5pt) node[right]{$A_1$};
		\fill (-5,-1) circle (5pt) node[right]{$A_2$};

		\fill (0,-2) circle (5pt) node[left]{$A_1$};
		\fill (-1,-5) circle (5pt) node[left]{$A_2$};
		
		\fill (3,-2) circle (5pt) node[right]{$E_1$};
		\fill (2,-1) circle (5pt) node[right]{$E_2$};
		
		\path[decoration=arrows, decorate] (0,-2) --++ (1,-1)--++(1,1) --++ (0,1);
		\path[decoration=arrows, decorate] (-1,-5) --++ (1,1)--++(1,-1)--++(1,1)--++(1,1)--++(0,1);
\end{tikzpicture}
\begin{tikzpicture}[scale=.45,baseline=(current bounding box.center)]	
	        \draw[->,thick] (-5.75,0)--(3.75,0) node[right]{$x$};
		\draw[->,thick] (0,-6.75)--(0,0.75) node[above]{$y$};
		\draw[dotted,thick] (-0.75,0.75)--(3.75,-3.75) node[right]{\tiny $x+y=0$};
		\draw[dotted,thick] (0.25,0.75)--(3.75,-2.75) node[right]{\tiny $x+y=1$};
	         \draw [help lines,step=1cm,dashed] (-5.75,-6.75) grid (3.75,0.75);
		\fill[red] (0,0) circle (5pt);
		\fill (-2,0) circle (5pt) node[right]{$A_1$};
		\fill (-5,-1) circle (5pt) node[right]{$A_2$};

		\fill (0,-2) circle (5pt) node[left]{$A_1$};
		\fill (-1,-5) circle (5pt) node[left]{$A_2$};
		
		\fill (3,-2) circle (5pt) node[right]{$E_1$};
		\fill (2,-1) circle (5pt) node[right]{$E_2$};
		
		\path[decoration=arrows, decorate] (0,-2) --++ (1,-1)--++(1,1) --++ (0,1);
		\path[decoration=arrows, decorate] (-1,-5) --++ (1,-1)--++(1,1)--++(1,1)--++(1,1)--++(0,1);
		\end{tikzpicture}
		
The weights are   
\begin{equation}
\label{first} 
-w,-X_1,-X_1^{-1},-X_2,-X_2^{-1},-X_1^{-1} -X_2, -X_1^{-1} X_2^{-1},-1,-X_1 X_2, -X_1 X_2^{-1}, 
\end{equation} 	
up to the overall factor 
$$- X_1 X_2 (1+X_1)(1+X_2)(X_1^{-1} + 1 + w + X_1)(X_2^{-1} +1 +w + X_2)=
-X_1 X_2 (1+X_1)(1+X_2)LR(X_1) LR(X_2),
$$
and, as can easily be seen, the sum of weights agrees with those for the arrowed 
Gelfand-Tsetlin patterns.

Now we consider the case $m=2$. We have $l=1$. The arrowed monotone triangles are as follows. 

\begin{multline*}
\begin{array}{ccc}
& {LR \atop 0} & \\ 
{L \atop 0} && {LR \atop 1}   
\end{array}, 
\begin{array}{ccc}
& {LR \atop 1} & \\ 
{LR \atop 0} && {R \atop 1}   
\end{array},
\begin{array}{ccc}
& {LR \atop 0} & \\ 
{L \atop 0} && {LR \atop 2}   
\end{array},
\begin{array}{ccc}
& {LR \atop 1} & \\ 
{LR \atop 0} && {LR \atop 2}   
\end{array},
\begin{array}{ccc}
& {LR \atop 2} & \\ 
{LR \atop 0} && {R \atop 2}   
\end{array}, 
\begin{array}{ccc}
& {LR \atop 1} & \\ 
{L \atop 1} && {LR \atop 2}   
\end{array}, \\
\begin{array}{ccc}
& {LR \atop 2} & \\ 
{LR \atop 1} && {R \atop 2}   
\end{array}\end{multline*} 
The weights are 
\begin{multline*} 
X_2(1+X_2^{-1}), 
X_1(1+X_2),
X_2^2(1+X_2^{-1}),
X_1 X_2(X_2^{-1}+1+w+X_2),
X_1^2(1+X_2),
X_1 X_2^2(1+X_2^{-1}), \\
X_1^2 X_2(1+X_2),
\end{multline*}
up to the overall factor $LR(X_1) LR(X_2)$, setting $t=u=v=1$.

As for the lattice paths, the situation is very similar to the case $m=3,l=1$, only $E_1=(3,-2)$ is replaced by the set $E_1=\{(2,-1),(3,-2)\}$ and $E_2=(2,-1)$ is replaced by the set $E_2=\{(1,0),(2,-1)\}$. It follows that all the families of paths from the case $m=3$ and $l=1$ also appear here. In addition, we have the following families of lattice paths.

\begin{tikzpicture}[scale=.45,baseline=(current bounding box.center)]	
	        \draw[->,thick] (-5.75,0)--(3.75,0) node[right]{$x$};
		\draw[->,thick] (0,-6.75)--(0,0.75) node[above]{$y$};
		\draw[dotted,thick] (-0.75,0.75)--(3.75,-3.75) node[right]{\tiny $x+y=0$};
		\draw[dotted,thick] (0.25,0.75)--(3.75,-2.75) node[right]{\tiny $x+y=1$};
	         \draw [help lines,step=1cm,dashed] (-5.75,-6.75) grid (3.75,0.75);
		\fill[red] (0,0) circle (5pt);
		\fill (-2,0) circle (5pt) node[right]{$A_1$};
		\fill (-5,-1) circle (5pt) node[right]{$A_2$};

		\fill (0,-2) circle (5pt) node[left]{$A_1$};
		\fill (-1,-5) circle (5pt) node[left]{$A_2$};
		
		\fill (2,-1) circle (5pt) node[right]{$E_1$};
		\fill (1,0) circle (5pt) node[right]{$E_2$};
		
		\path[decoration=arrows, decorate] (0,-2) --++ (1,1)--++ (0,1);
		\path[decoration=arrows, decorate] (-1,-5) --++ (1,1)--++(1,1)--++(1,1)--++(0,1);
		\end{tikzpicture}
\begin{tikzpicture}[scale=.45,baseline=(current bounding box.center)]	
	        \draw[->,thick] (-5.75,0)--(3.75,0) node[right]{$x$};
		\draw[->,thick] (0,-6.75)--(0,0.75) node[above]{$y$};
		\draw[dotted,thick] (-0.75,0.75)--(3.75,-3.75) node[right]{\tiny $x+y=0$};
		\draw[dotted,thick] (0.25,0.75)--(3.75,-2.75) node[right]{\tiny $x+y=1$};
	         \draw [help lines,step=1cm,dashed] (-5.75,-6.75) grid (3.75,0.75);
		\fill[red] (0,0) circle (5pt);
		\fill (-2,0) circle (5pt) node[right]{$A_1$};
		\fill (-5,-1) circle (5pt) node[right]{$A_2$};

		\fill (0,-2) circle (5pt) node[left]{$A_1$};
		\fill (-1,-5) circle (5pt) node[left]{$A_2$};
		
		\fill (3,-2) circle (5pt) node[right]{$E_1$};
		\fill (1,0) circle (5pt) node[right]{$E_2$};
		
		\path[decoration=arrows, decorate] (0,-2) --++ (1,1)--++ (0,1);
		\path[decoration=arrows, decorate] (-1,-5) --++ (1,1)--++(1,1)--++(1,1)--++(1,0);
		\end{tikzpicture}
\begin{tikzpicture}[scale=.45,baseline=(current bounding box.center)]	
	        \draw[->,thick] (-5.75,0)--(3.75,0) node[right]{$x$};
		\draw[->,thick] (0,-6.75)--(0,0.75) node[above]{$y$};
		\draw[dotted,thick] (-0.75,0.75)--(3.75,-3.75) node[right]{\tiny $x+y=0$};
		\draw[dotted,thick] (0.25,0.75)--(3.75,-2.75) node[right]{\tiny $x+y=1$};
	         \draw [help lines,step=1cm,dashed] (-5.75,-6.75) grid (3.75,0.75);
		\fill[red] (0,0) circle (5pt);
		\fill (-2,0) circle (5pt) node[right]{$A_1$};
		\fill (-5,-1) circle (5pt) node[right]{$A_2$};

		\fill (0,-2) circle (5pt) node[left]{$A_1$};
		\fill (-1,-5) circle (5pt) node[left]{$A_2$};
		
		\fill (3,-2) circle (5pt) node[right]{$E_1$};
		\fill (1,0) circle (5pt) node[right]{$E_2$};
		
		\path[decoration=arrows, decorate] (0,-2) --++ (1,1)--++ (0,1);
		\path[decoration=arrows, decorate] (-1,-5) --++ (1,1)--++(1,1)--++(1,1)--++(1,-1)--++(0,1);
		\end{tikzpicture}
\begin{tikzpicture}[scale=.45,baseline=(current bounding box.center)]	
	        \draw[->,thick] (-5.75,0)--(3.75,0) node[right]{$x$};
		\draw[->,thick] (0,-6.75)--(0,0.75) node[above]{$y$};
		\draw[dotted,thick] (-0.75,0.75)--(3.75,-3.75) node[right]{\tiny $x+y=0$};
		\draw[dotted,thick] (0.25,0.75)--(3.75,-2.75) node[right]{\tiny $x+y=1$};
	         \draw [help lines,step=1cm,dashed] (-5.75,-6.75) grid (3.75,0.75);
		\fill[red] (0,0) circle (5pt);
		\fill (-2,0) circle (5pt) node[right]{$A_1$};
		\fill (-5,-1) circle (5pt) node[right]{$A_2$};

		\fill (0,-2) circle (5pt) node[left]{$A_1$};
		\fill (-1,-5) circle (5pt) node[left]{$A_2$};
		
		\fill (3,-2) circle (5pt) node[right]{$E_1$};
		\fill (1,0) circle (5pt) node[right]{$E_2$};
		
		\path[decoration=arrows, decorate] (0,-2) --++ (1,1)--++ (0,1);
		\path[decoration=arrows, decorate] (-1,-5) --++ (1,1)--++(1,1)--++(1,-1)--++(1,1)--++(0,1);
		\end{tikzpicture}						
\begin{tikzpicture}[scale=.45,baseline=(current bounding box.center)]	
	        \draw[->,thick] (-5.75,0)--(3.75,0) node[right]{$x$};
		\draw[->,thick] (0,-6.75)--(0,0.75) node[above]{$y$};
		\draw[dotted,thick] (-0.75,0.75)--(3.75,-3.75) node[right]{\tiny $x+y=0$};
		\draw[dotted,thick] (0.25,0.75)--(3.75,-2.75) node[right]{\tiny $x+y=1$};
	         \draw [help lines,step=1cm,dashed] (-5.75,-6.75) grid (3.75,0.75);
		\fill[red] (0,0) circle (5pt);
		\fill (-2,0) circle (5pt) node[right]{$A_1$};
		\fill (-5,-1) circle (5pt) node[right]{$A_2$};

		\fill (0,-2) circle (5pt) node[left]{$A_1$};
		\fill (-1,-5) circle (5pt) node[left]{$A_2$};
		
		\fill (3,-2) circle (5pt) node[right]{$E_1$};
		\fill (1,0) circle (5pt) node[right]{$E_2$};
		
		\path[decoration=arrows, decorate] (0,-2) --++ (1,1)--++ (0,1);
		\path[decoration=arrows, decorate] (-1,-5) --++ (1,1)--++(1,-1)--++(1,1)--++(1,1)--++(0,1);
		\end{tikzpicture}
\begin{tikzpicture}[scale=.45,baseline=(current bounding box.center)]	
	        \draw[->,thick] (-5.75,0)--(3.75,0) node[right]{$x$};
		\draw[->,thick] (0,-6.75)--(0,0.75) node[above]{$y$};
		\draw[dotted,thick] (-0.75,0.75)--(3.75,-3.75) node[right]{\tiny $x+y=0$};
		\draw[dotted,thick] (0.25,0.75)--(3.75,-2.75) node[right]{\tiny $x+y=1$};
	         \draw [help lines,step=1cm,dashed] (-5.75,-6.75) grid (3.75,0.75);
		\fill[red] (0,0) circle (5pt);
		\fill (-2,0) circle (5pt) node[right]{$A_1$};
		\fill (-5,-1) circle (5pt) node[right]{$A_2$};

		\fill (0,-2) circle (5pt) node[left]{$A_1$};
		\fill (-1,-5) circle (5pt) node[left]{$A_2$};
		
		\fill (3,-2) circle (5pt) node[right]{$E_1$};
		\fill (1,0) circle (5pt) node[right]{$E_2$};
		
		\path[decoration=arrows, decorate] (0,-2) --++ (1,1)--++ (0,1);
		\path[decoration=arrows, decorate] (-1,-5) --++ (1,-1)--++(1,1)--++(1,1)--++(1,1)--++(0,1);
		\end{tikzpicture}
\begin{tikzpicture}[scale=.45,baseline=(current bounding box.center)]	
	        \draw[->,thick] (-5.75,0)--(3.75,0) node[right]{$x$};
		\draw[->,thick] (0,-6.75)--(0,0.75) node[above]{$y$};
		\draw[dotted,thick] (-0.75,0.75)--(3.75,-3.75) node[right]{\tiny $x+y=0$};
		\draw[dotted,thick] (0.25,0.75)--(3.75,-2.75) node[right]{\tiny $x+y=1$};
	         \draw [help lines,step=1cm,dashed] (-5.75,-6.75) grid (3.75,0.75);
		\fill[red] (0,0) circle (5pt);
		\fill (-2,0) circle (5pt) node[right]{$A_1$};
		\fill (-5,-1) circle (5pt) node[right]{$A_2$};

		\fill (0,-2) circle (5pt) node[left]{$A_1$};
		\fill (-1,-5) circle (5pt) node[left]{$A_2$};
		
		\fill (2,-1) circle (5pt) node[right]{$E_1=E_2$};
		
		\path[decoration=arrows, decorate] (0,-2) --++ (1,1)--++ (1,0);
		\path[decoration=arrows, decorate] (-1,-5) --++ (1,1)--++(1,1)--++(1,1)--++(0,1);
		\end{tikzpicture}

Thus, in addition to the weights in \eqref{first}, these families of lattice paths give 
$$
-1,-w,-X_1,-X_1^{-1},-X_2,-X_2^{-1},-1,w,
$$
up to the overall factor 
$$- X_1 X_2 (X_1^{-1} + 1 + w + X_1)(X_2^{-1} +1 +w + X_2)=
-X_1 X_2 LR(X_1) LR(X_2),
$$
where the last two weights come from the last picture, first by interpreting the endpoint of the path that starts in $A_1$ as element of $E_2$ and second as element of $E_1$. 										

\subsection{First proof of Theorem~\ref{RHScomplicated}}
The approach of the first proof of Theorem~\ref{RHScomplicated} is closely related to the approach we used in the proof of Theorem~2.2 in \cite{nVASMDPP}.

We consider the following bases for Laurent polynomials in $X$ that are invariant under the transformation $X \to X^{-1}$: let 
$$
q_i(X)=\frac{X^i-X^{-i}}{X-X^{-1}} \qquad \text{and} \qquad b_i(X)=(X+X^{-1})^i,
$$
then $(q_i(X))_{i \ge 0}$ and $(b_i(X))_{i \ge 0}$ are two such bases. It is not hard to verify that 
\begin{equation}
\label{transform} 
q_m(X)= \sum_{r=0}^{(m-1)/2} (-1)^r \binom{m-r-1}{r} b_{m-1-2r}(X).
\end{equation} 
In order to derive a combinatorial interpretation of the right-hand side of \eqref{rincluded1}, consider 
\begin{equation}
\label{det}  
\det_{1 \le i, j \le n} \left( X_i^{j-1}  (1+X_i)^{j-1} (1+ w X_i)^{n-j}  - X_i^{m+2n-j} (1+X_i^{-1})^{j-1}  (1+w X_i^{-1})^{n-j} \right).
\end{equation} 
\emph{We start by considering the case that $m$ is odd:} We set $m=2l+1$, and pull out $\prod_{i=1}^n X_i^{l+n}$.
\begin{equation}
\label{odd}
\prod_{i=1}^n X_i^{l+n} \det_{1 \le i, j \le n} \left( X_i^{j-l-n-1}  (1+X_i)^{j-1} (1+ w X_i)^{n-j}  - X_i^{-j+l+n+1} (1+X_i^{-1})^{j-1}  (1+w X_i^{-1})^{n-j} \right)
\end{equation} 
The entry in the $i$-th row and $j$-th column of the matrix underlying the determinant is obtained from 
\begin{multline*} 
\frac{X^{j-l-n-1} (1+X)^{j-1} (1+w X)^{n-j} - X^{-j+l+n+1} (1+X^{-1})^{j-1} (1+w X^{-1})^{n-j}}{X-X^{-1}} 
\\
= \sum_{p,q \ge 0} \binom{j-1}{p} \binom{n-j}{q} w^q \frac{X^{j-l-n+p+q-1}-X^{-j+l+n-p-q+1}}{X-X^{-1}}
\end{multline*} 
by multiplying with $X-X^{-1}$ and then setting $X=X_i$. Note that this expression is invariant under replacing $X$ by $X^{-1}$. 
From 
\eqref{transform}, it follows that this is further equal to 
\begin{multline*}
\sum_{p,q,r \ge 0 \atop |j-l-n+p+q-1|-1-2r \ge 0} \sgn(j-l-n+p+q-1) (-1)^r w^q \binom{j-1}{p} \binom{n-j}{q} \\ \times 
\binom{|j-l-n+p+q-1|-r-1}{r}   b_{|j-l-n+p+q-1|-1-2r}(X).
\end{multline*} 

We apply the following lemma. A proof can be found in \cite[Lemma 7.2]{nASMDPP}. Note that the lemma also involves 
complete homogeneous symmetric polynomials $h_k$ with negative $k$ as defined in \cite[Section~5]{nASMDPP}. Concretely, we 
define $h_k(X_1,\ldots,X_n)=0$ for $k=-1,-2,\ldots,-n+1$ and 
\begin{equation}
\label{reci} 
h_k(X_1,\ldots,X_n) = (-1)^{n+1} X_1^{-1} \ldots X_n^{-1} h_{-k-n}(X_1^{-1},\ldots,X_n^{-1})
\end{equation}
for $k \le -n$. Note that a consequence of this definition is that the latter relation is true for any $k$. 

\begin{lemma}
\label{limit}
Let $f_j(Y)$ be formal Laurent series for $1 \le j \le n$, and define 
$$
f_j[Y_1,\ldots,Y_i]=\sum_{k \in \mathbb{Z}} \langle Y^{k} \rangle f_j(Y) \cdot h_{k-i+1}(Y_1,\ldots,Y_i),
$$
where $\langle Y^{k} \rangle f_j(Y)$ denotes the coefficient of $Y^{k}$ in $f_j(Y)$ and $h_{k-i+1}$ denotes the complete homogeneous symmetric polynomial of degree~$k-i+1$.
Then 
$$
\frac{\det_{1 \le i, j \le n} \left( f_j(Y_i) \right) }{\prod_{1 \le i < j \le n} (Y_j - Y_i)} = \det_{1 \le i, j \le n} \left( f_j[Y_1,\ldots,Y_i] \right).
$$
\end{lemma}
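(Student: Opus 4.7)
My plan is to identify the operation $f_j \mapsto f_j[Y_1,\ldots,Y_i]$ with a classical divided-difference operator and then read off the lemma from a lower-triangular matrix factorization. The whole argument rests on a single pointwise identity which lets me rewrite the transformed entries as a linear combination of untransformed ones.

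The first step is to establish
$$
h_{k-i+1}(Y_1,\ldots,Y_i) \;=\; \sum_{j=1}^{i} \frac{Y_j^{k}}{\prod_{l \neq j,\, l \le i}(Y_j-Y_l)} \qquad \text{for every } k \in \mathbb{Z}.
$$
For $k \ge i-1$ this is the classical partial-fraction identity obtained by expanding $\prod_{l=1}^{i}(1-Y_l T)^{-1}$ in two ways (the geometric series in $T$ versus the partial-fraction decomposition over the simple poles $T=1/Y_j$) and comparing coefficients of $T^{k-i+1}$. For $0 \le k \le i-2$ both sides vanish: the right side because it is the $i$-th divided difference of $Y^k$, a polynomial of degree less than $i-1$; the left side by the convention $h_{-1}=\cdots=h_{-i+1}=0$. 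For $k \le -1$ I would substitute $Y_l \mapsto Y_l^{-1}$ on the right-hand side, pull out the resulting factor $(Y_1 \cdots Y_i)^{-1}$, and reduce to the already-established range $k \ge i-1$; the sign $(-1)^{i-1}$ produced by that substitution then combines with \eqref{reci} to recover exactly $h_{k-i+1}(Y_1,\ldots,Y_i)$.

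With this identity in hand, linearity in $f_j$ yields
$$
f_j[Y_1,\ldots,Y_i] \;=\; \sum_{j'=1}^{i} \frac{f_j(Y_{j'})}{\prod_{l \neq j',\, l \le i}(Y_{j'}-Y_l)},
$$
so the matrix $M := (f_j[Y_1,\ldots,Y_i])_{1 \le i,j \le n}$ factors as $M = L\,N$, where $N := (f_j(Y_i))_{1 \le i,j \le n}$ and $L$ is the lower triangular matrix with entries $L_{i,k} = 1/\prod_{l \neq k,\, l \le i}(Y_k-Y_l)$ for $k \le i$ and $L_{i,k}=0$ for $k>i$. The diagonal entry $L_{i,i}=1/\prod_{l<i}(Y_i-Y_l)$ immediately gives $\det L = 1/\prod_{1 \le i<j \le n}(Y_j-Y_i)$, whence $\det M = \det N / \prod_{i<j}(Y_j-Y_i)$, which is the claim.

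The hard part will be the $k \le -1$ case of the pointwise identity, where the $(-1)^{n+1}$ built into the extended convention \eqref{reci} must be reconciled with the $(-1)^{i-1}$ arising from inverting the $Y_l$; a miscount of signs or of the parity of $i$ would collapse the argument. Everything else---the partial-fraction computation for $k \ge i-1$, the vanishing of low-degree divided differences, and the row-operation interpretation of the matrix product---is essentially automatic once that sign bookkeeping is carried out.
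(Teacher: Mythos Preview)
Your argument is correct. The key identity
\[
h_{k-i+1}(Y_1,\ldots,Y_i)=\sum_{j'=1}^{i}\frac{Y_{j'}^{k}}{\prod_{l\neq j',\,l\le i}(Y_{j'}-Y_l)}
\]
does hold for all $k\in\mathbb{Z}$: your sketch for $k\ge i-1$ and $0\le k\le i-2$ is standard, and the worry you flag about the sign in the $k\le -1$ case is unfounded. Carrying out the substitution $Y_l\mapsto Y_l^{-1}$ on the right-hand side produces a factor $(-1)^{i-1}\prod_l Y_l^{-1}$ times $h_{-k-1}(Y_1^{-1},\ldots,Y_i^{-1})$, and since $(-1)^{i-1}=(-1)^{i+1}$ this matches the convention \eqref{reci} exactly. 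The lower-triangular factorization $M=LN$ with $\det L=\prod_{i<j}(Y_j-Y_i)^{-1}$ then finishes the proof.

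As for comparison: the paper does not actually prove this lemma here but defers to \cite[Lemma~7.2]{nASMDPP}. Your divided-difference/partial-fraction route is a natural and self-contained way to establish it, and the triangular factorization makes the determinant identity transparent.
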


Noting that a Laurent polynomial in $X$ that is invariant under the replacement $X \to X^{-1}$ can be written as a polynomial in $X+X^{-1}$, we use the lemma to basically rewrite \eqref{odd} as follows.
\begin{multline*} 
\frac{
\det_{1 \le i, j \le n} \left( X_i^{j-l-n-1}  (1+X_i)^{j-1} (1+ w X_i)^{n-j}  - X_i^{-j+l+n+1} (1+X_i^{-1})^{j-1}  (1+w X_i^{-1})^{n-j} \right)}
{\prod_{1 \le i < j \le n} (X_j + X_j^{-1} - X_i - X_i^{-1})} \\
= \prod_{i=1}^n (X_i - X_i^{-1})  \det_{1 \le i, j \le n} \left( \sum_{p,q,r \ge 0 \atop |j-l-n+p+q-1|-i-2r \ge 0} 
\sgn(j-l-n+p+q-1) (-1)^r w^q \binom{j-1}{p} \binom{n-j}{q}  \right. \\ \left. \phantom{\sum_{p,q,r \ge 0}}  \times 
\binom{|j-l-n+p+q-1|-r-1}{r} h_{|j-l-n+p+q-1|-i-2r}(X_1+X_1^{-1},\ldots,X_i+X_i^{-1}) \right) 
\end{multline*} 
Now, as $X_j + X_j^{-1} - X_i - X_i^{-1}=(X_i-X_j)(1-X_i X_j)X_i^{-1} X_j^{-1}$, in order to find a combinatorial interpretation for 
the right-hand side of \eqref{rincluded1}, we need to find a combinatorial interpretation of 
\begin{multline*} 
(-1)^{\binom{n}{2}} \prod_{i=1}^{n} X_i^{l+1} (X_i^{-1}+1+w + X_i) (X_i - X_i^{-1})(1-X_i)^{-1} \\ \times 
\det_{1 \le i, j \le n} \left( \sum_{p,q,r \ge 0 \atop |j-l-n+p+q-1|-i-2r \ge 0} \sgn(j-l-n+p+q-1) (-1)^r w^q \binom{j-1}{p} \binom{n-j}{q}   \right. \\ \left. \phantom{\sum_{p,q,r \ge 0}}  \times \binom{|j-l-n+p+q-1|-r-1}{r}
h_{|j-l-n+p+q-1|-i-2r}(X_1+X_1^{-1},\ldots,X_i+X_i^{-1}) \right)\\
=(-1)^{\binom{n+1}{2}} \prod_{i=1}^{n} X_i^{l} (X_i^{-1}+1+w + X_i)(1+X_i) \\ \times 
\det_{1 \le i, j \le n} \left( \sum_{p,q,r \ge 0 \atop |j-l-n+p+q-1|-i-2r \ge 0} \sgn(j-l-n+p+q-1) (-1)^r w^q \binom{j-1}{p} \binom{n-j}{q}   \right. \\ \left. \phantom{\sum_{p,q,r \ge 0}}  \times \binom{|j-l-n+p+q-1|-r-1}{r}
h_{|j-l-n+p+q-1|-i-2r}(X_1+X_1^{-1},\ldots,X_i+X_i^{-1}) \right).
\end{multline*}  

For this purpose, we find a combinatorial interpretation of the entry of the underlying matrix, i.e.,  
\begin{multline*}
\sum_{p,q,r \ge 0 \atop |j-l-n+p+q-1|-i-2r \ge 0} \sgn(j-l-n+p+q-1) (-1)^r w^q \binom{j-1}{p} \binom{n-j}{q} \binom{|j-l-n+p+q-1|-r-1}{r}  \\  \times 
h_{|j-l-n+p+q-1|-i-2r}(X_1+X_1^{-1},\ldots,X_i+X_i^{-1})
\end{multline*} 
in terms of a lattice paths generating function. We simplify the expression using the transformation $q \to n-j-q$. 
\begin{multline}
\label{triple} 
\sum_{p,q,r \ge 0 \atop |p-q-l-1|-i-2r \ge 0} \sgn(p-q-l-1) (-1)^r w^{n-j-q} \binom{j-1}{p} \binom{n-j}{q} \binom{|p-q-l-1|-r-1}{r}  \\
\times 
h_{|p-q-l-1|-i-2r}(X_1+X_1^{-1},\ldots,X_i+X_i^{-1})
\end{multline} 
We simplify the expression further using the following lemma. A combinatorial proof of it using a sign-reversing involution is provided in 
\cite[Lemma 7.7]{nVASMDPP}.
\begin{lemma} 
\label{h}
Let $a,i$ be positive integers with $i \le a$. Then 
$$
\sum_{r=0}^{(a-i)/2} (-1)^r \binom{a-r-1}{r} h_{a-i-2r}(X_1+X_1^{-1},\ldots,X_i+X_i^{-1}) = h_{a-i}(X_1,X_1^{-1},\ldots,X_i,X_i^{-1}).
$$
\end{lemma}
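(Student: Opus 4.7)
The plan is to prove the identity by comparing formal power series in an auxiliary variable $t$. Setting $Y_j := X_j + X_j^{-1}$, the key algebraic observation is the factorization $(1-X_j t)(1-X_j^{-1} t) = 1 - Y_j t + t^2$. Combined with the standard generating function for complete homogeneous symmetric polynomials, this immediately gives
\[
\sum_{a \ge i} h_{a-i}(X_1, X_1^{-1}, \ldots, X_i, X_i^{-1})\, t^{a} \;=\; t^i \prod_{j=1}^{i} \frac{1}{(1-X_j t)(1-X_j^{-1} t)} \;=\; t^i \prod_{j=1}^{i} \frac{1}{1 - Y_j t + t^2},
\]
which is the generating function of the right-hand side of the lemma.

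For the left-hand side, I would form the analogous generating function $\sum_{a \ge i} \mathrm{LHS}_a\, t^a$ and interchange the $a$- and $r$-sums via the substitution $a = i + b + 2r$ with $b, r \ge 0$, which is clearly a bijection of index sets. This decouples the two summations: the $b$-sum contributes $h_b(Y_1, \ldots, Y_i)\, t^{i+b}$, while the $r$-sum reduces, by the negative binomial identity $\sum_{r \ge 0} \binom{n+r-1}{r} x^r = (1-x)^{-n}$ applied with $n = i + b$ and $x = -t^2$, to the factor $(1+t^2)^{-(i+b)}$. Collecting powers of $1 + t^2$ yields
\[
\sum_{a \ge i} \mathrm{LHS}_a\, t^a \;=\; \frac{t^i}{(1+t^2)^{i}} \sum_{b \ge 0} h_b(Y_1, \ldots, Y_i) \left( \frac{t}{1+t^2} \right)^{b} \;=\; \frac{t^i}{(1+t^2)^{i}} \prod_{j=1}^{i} \frac{1}{1 - \dfrac{Y_j t}{1 + t^2}},
\]
and clearing the inner denominator via $1 - Y_j t/(1+t^2) = (1 - Y_j t + t^2)/(1+t^2)$ collapses the expression to the same $t^i \prod_{j} (1 - Y_j t + t^2)^{-1}$ found above. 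Extracting the coefficient of $t^a$ from both generating functions proves the identity.

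No individual step is delicate; the main minor obstacle is merely careful bookkeeping of the index substitution and a check that the negative binomial evaluation is a valid identity of formal power series in $t$ (which it is, since $1 + t^2$ has constant term $1$). An alternative that stays closer to the combinatorial flavor of the surrounding material is the sign-reversing involution indicated by the citation \cite[Lemma 7.7]{nVASMDPP}, where one expands each factor of $Y_j = X_j + X_j^{-1}$ via the binomial theorem and pairs the signed contributions indexed by $r$ with cancelling $(X_j, X_j^{-1})$-pairs appearing in the monomial expansion of $h_{a-i-2r}(Y_1, \ldots, Y_i)$. I would present the generating-function proof for brevity, since the identity is essentially the statement that the Chebyshev-like recursion $1 - Y_j t + t^2 = (1-X_j t)(1 - X_j^{-1} t)$ encodes precisely the sought-after change of basis between the $b_m$'s and the $q_m$'s of equation~\eqref{transform}, extended multiplicatively over $j = 1, \ldots, i$.
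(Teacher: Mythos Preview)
Your generating-function argument is correct. The paper does not supply its own proof of this lemma; it simply cites \cite[Lemma~7.7]{nVASMDPP}, where the identity is established combinatorially via a sign-reversing involution (the approach you mention as an alternative in your final paragraph). Your route is genuinely different: rather than expanding $h_{a-i-2r}(Y_1,\ldots,Y_i)$ into monomials and cancelling signed terms by an explicit pairing, you package everything into the single factorization $(1-X_j t)(1-X_j^{-1}t)=1-Y_j t+t^2$ and let the negative-binomial series $(1+t^2)^{-(i+b)}$ absorb the $r$-sum. This is shorter and more algebraic; the involution proof, by contrast, is bijective and explains the cancellation term by term, which is closer in spirit to the lattice-path interpretations the paper is building toward. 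Either approach is fine here.
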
 
Therefore, the sum in \eqref{triple} is equal to 
\begin{equation}
\label{path}  
\sum_{p,q} \sgn(p-q-l-1)  w^{n-j-q} \binom{j-1}{p} \binom{n-j}{q}   \\
h_{|p-q-l-1|-i}(X_1,X_1^{-1},\ldots,X_i,X_i^{-1}).
\end{equation} 

We claim the following: If $p-q-l-1 \ge 0$, then \eqref{path} is the generating function of lattice paths from 
$(-3i+1,-i+1)$ to $(n-j+l+1,j-l-2)$ such that the following is satisfied.
\begin{itemize}
\item Below and on the line $x+y=0$, the step set is $\{(1,1),(-1,1)\}$. Steps of type $(-1,1)$ with distances $0,2,4,\ldots$ from 
$x+y=n$ are equipped with the weights $X_1,X_2,X_3,\ldots$, respectively, while steps of type $(-1,1)$ with distances $1,3,5,\ldots$ 
are equipped with the weights $X_1^{-1},X_2^{-1},X_3^{-1},\ldots$, respectively.
\item Above the line $x+y=0$, the step set is $\{(1,0),(0,1)\}$. Above the line $x+y=j-1$, horizontal steps are equipped with the weight
$w$.
\end{itemize} 
Namely, if we assume that there are $q$ steps of type $(0,1)$ above the line $x+y=j-1$, and, therefore, $n-j-q$ steps of type
$(1,0)$, then the path intersects the line $x+y=j-1$ in the lattice point $(l+1+q,j-l-2-q)$, assuming that the endpoint of the path is 
$(n-j+l+1,j-l-2)$, and there are $\binom{n-j}{q}$ of such paths 
each of them contributing $w^{n-j-q}$ to the weight. Note that this weight depends on $j$ if $w \not= 0,1$, and this causes complications when applying the Lindstr\"om-Gessel-Viennot lemma. 

If we further assume that there are $p$ steps of type $(1,0)$ below the 
line $x+y=j-1$, and, therefore, $j-p$ steps of type $(0,1)$,  then the last lattice point of such a path on the line $x+y=0$ when traversing 
the path from $(-3i+1,-i+1)$ to $(n-j+l+1,j-l-2)$ is $(-p+q+l+1,p-q-l-1)$. Note that by the assumption $p-q-l-1 \ge 0$, the lattice point 
$(-p+q+l+1,p-q-l-1)$ is in the second quadrant, i.e., $\{(x,y)|x \le 0, y \ge 0 \}$.

Finally, lattice paths from $(-3i+1,-i+1)$ to $(-p+q+l+1,p-q-l-1)$ with step set $\{(1,1),(-1,1)\}$ have $p-q-l-1-i$ steps of type 
$(-1,1)$ and $2i-1$ steps of type $(1,1)$. The generating function of such paths is clearly 
$h_{p-q-l-1-i}(X_1,X_1^{-1},\ldots,X_i,X_i^{-1})=h_{|p-q-l-1|-i}(X_1,X_1^{-1},\ldots,X_i,X_i^{-1})$.

The situation is very similar if $p-q-l-1 \le 0$, except that we need to replace the starting point $(-3i+1,-i+1)$ by $(-i+1,-3i+1)$ and 
the step set is $\{(1,1),(1,-1)\}$ below the line $x+y=0$. Again we can assume that $(-p+q+l+1,p-q-l-1)$ is the last lattice point on the line $x+y=0$ when traversing the path from $(-i+1,-3i+1)$ to $(-p+q+l+1,p-q-l-1)$. In this case, $(-p+q+l+1,p-q-l-1)$ lies in the fourth quadrant $\{(x,y)|x \le 0, y \le 0 \}$. We have $-p+q+l+1-i=|p-q-l-1|-i$ steps of type $(1,-1)$ and $2i-1$ steps of type $(1,1)$, thus the generating function in this segment is also $h_{|p-q-l-1|-i}(X_1,X_1^{-1},\ldots,X_i,X_i^{-1})$. 

Consequently, we can conclude that the right-hand side of \eqref{rincluded1} has the following combinatorial interpretation: We consider families of $n$ lattice paths from $A_i=\{(-3i+1,-i+1),(-i+1,-3i+1)\}$, $i=1,2,\ldots,n$, to $E_j=(n-j+l+1,j-l-2)$, $j=1,2,\ldots,n$, with steps sets and 
weights as described above. By the Lindstr\"om-Gessel-Viennot lemma \cite{Lin73,GesVie85,GesVie89}, the paths can be assumed to be 
non-intersecting on and below the line $x+y=0$. 

In case $w=0,1$, we can also assume them to be non-intersecting. This is clear for $w=1$.
In case $w=0$, we can assume that there are no steps of type $(1,0)$ above the line $x+y=j-1$, and, therefore, we can also have $(n-j+l+1,2j-n-l-2)$ on the line $x+y=j-1$ as endpoint since above the line all the $n-j$ steps have to be of type $(0,1)$. Whenever we choose $(-i+1,-3i+1)$, this contributes $-1$ to the weight. 

In the non-intersecting setting, suppose we choose 
$(-i+1,-3i+1)$ from $A_i$ for $1 \le i_1 < \ldots < i_m \le n$, then the sign of the permutation $\sigma$ such that $A_i$ is connected to 
$E_{\sigma(i)}$ via the paths is $(-1)^{i_1+i_2+\ldots+i_m-m}$. This gives a total sign of  
$(-1)^{i_1+i_2+\ldots+i_m}$.
Recall also that we have an additional overall weight of 
$$
(-1)^{\binom{n+1}{2}} \prod_{i=1}^{n} X_i^{l} (X_i^{-1}+1+w + X_i)(1+X_i).
$$
Combining the sign from above with $(-1)^{\binom{n+1}{2}}=(-1)^{1+2+\ldots+n}$, the sign can also be computed as follows: suppose we choose $(-3i+1,-i+1)$ from $A_i$ precisely for $i_1,\ldots,i_m$, then the sign is $(-1)^{i_1+i_2+\ldots+i_m}$ and in this setting the overall weight is 
$$
\prod_{i=1}^{n} X_i^{l} (X_i^{-1}+1+w + X_i)(1+X_i).
$$
This concludes the proof of the first part of Theorem~\ref{RHScomplicated}.

\bigskip

\emph{Now we consider the case that $m$ is even:} We set $m=2l$ in \eqref{det}, and pull out $\prod_{i=1}^n X_i^{l+n}$.
$$
\prod_{i=1}^n X_i^{l+n} \det_{1 \le i, j \le n} \left( X_i^{j-l-n-1}  (1+X_i)^{j-1} (1+ w X_i)^{n-j}  - X_i^{-j+l+n} (1+X_i^{-1})^{j-1}  (1+w X_i^{-1})^{n-j} \right).
$$
The entry in the $i$-th row of the $j$-th column of the matrix underlying the determinant is obtained from
\begin{multline*} 
\frac{X^{j-l-n-1} (1+X)^{j-1} (1+w X)^{n-j} - X^{-j+l+n} (1+X^{-1})^{j-1} (1+w X^{-1})^{n-j}}{1-X^{-1}} 
\\
= \sum_{p,q \ge 0} \binom{j-1}{p} \binom{n-j}{q} w^q \frac{X^{j-l-n+p+q-1}-X^{-j+l+n-p-q}}{1-X^{-1}}
\end{multline*} 
when multiplying with $1-X^{-1}$ and then setting $X=X_i$. Now note that 
$$
\frac{X^m-X^{-m-1}}{1-X^{-1}} = q_{m+1}(X) + q_{m}(X) 
$$
for any integer $m$, so that we obtain 
$$
\sum_{p,q \ge 0} \binom{j-1}{p} \binom{n-j}{q} w^q \left(q_{j-l-n+p+q}(X) + q_{j-l-n+p+q-1}(X) \right).
$$
It follows from \eqref{transform} that this is 
\begin{multline*}
\sum_{p,q,r \ge 0 \atop |j-l-n+p+q|-1-2r \ge 0} \sgn(j-l-n+p+q) (-1)^r w^q \binom{j-1}{p} \\ 
\times \binom{n-j}{q} \binom{|j-l-n+p+q|-r-1}{r}   
b_{|j-l-n+p+q|-1-2r}(X) \\
+ \sum_{p,q,r \ge 0 \atop |j-l-n+p+q-1|-1-2r \ge 0} \sgn(j-l-n+p+q-1) (-1)^r w^q \binom{j-1}{p} \binom{n-j}{q} 
\\ \times \binom{|j-l-n+p+q-1|-r-1}{r}   
b_{|j-l-n+p+q-1|-1-2r}(X).
\end{multline*} 
Also here we simplify the expression using the replacement $q \to n-j-q$.
\begin{multline*}
\sum_{p,q,r \ge 0 \atop |-l+p-q|-1-2r \ge 0} \sgn(-l+p-q) (-1)^r w^{n-j-q} \binom{j-1}{p} \binom{n-j}{q} \binom{|-l+p-q|-r-1}{r}   
b_{|-l+p-q|-1-2r}(X) \\
+ \sum_{p,q,r \ge 0 \atop |-l+p-q-1|-1-2r \ge 0} \sgn(-l+p-q-1) (-1)^r w^{n-j-q} \binom{j-1}{p} \binom{n-j}{q} \binom{|-l+p-q-1|-r-1}{r}   
b_{|-l+p-q-1|-1-2r}(X)
\end{multline*} 
This implies the following.
\begin{multline*} 
\frac{
\det_{1 \le i, j \le n} \left( X_i^{j-l-n-1}  (1+X_i)^{j-1} (1+ w X_i)^{n-j}  - X_i^{-j+l+n} (1+X_i^{-1})^{j-1}  (1+w X_i^{-1})^{n-j} \right)}
{\prod_{1 \le i < j \le n} (X_j + X_j^{-1} - X_i - X_i^{-1})} \\
= \prod_{i=1}^n (1-X_i^{-1})  \det_{1 \le i, j \le n} \left( a_{i,j} 
 \right),  
\end{multline*}
with 
\begin{multline*} 
a_{i,j} = \sum_{p,q,r \ge 0 \atop |p-q-l|-1-2r \ge 0} \sgn(p-q-l) (-1)^r w^{n-j-q} \binom{j-1}{p} \binom{n-j}{q} \\ \times \binom{|p-q-l|-r-1}{r}   
h_{|p-q-l|-i-2r}(X_1+X_1^{-1},\ldots,X_i+X_i^{-1}) \ \\
 + \sum_{p,q,r \ge 0 \atop |p-q-l-1|-1-2r \ge 0} \sgn(p-q-l-1) (-1)^r w^{n-j-q} \binom{j-1}{p} \binom{n-j}{q} \\ \times \binom{|p-q-l-1|-r-1}{r}   
h_{|p-q-l-1|-i-2r}(X_1+X_1^{-1},\ldots,X_i+X_i^{-1}).
\end{multline*} 
Using Lemma~\ref{h}, we see that this is equal to 
\begin{multline} 
\label{path2}
b_{i,j} = \sum_{p,q \ge 0} \sgn(p-q-l) w^{n-j-q} \binom{j-1}{p} \binom{n-j}{q}   
h_{|p-q-l|-i}(X_1,X_1^{-1},\ldots,X_i,X_i^{-1}) \ \\
 + \sum_{p,q \ge 0} \sgn(p-q-l-1) w^{n-j-q} \binom{j-1}{p} \binom{n-j}{q} 
h_{|p-q-l-1|-i}(X_1,X_1^{-1},\ldots,X_i,X_i^{-1}).
\end{multline} 

Here we need to find a combinatorial interpretation of 
$$
(-1)^{\binom{n+1}{2}} \prod_{i=1}^{n} X_i^{l} (X_i^{-1}+1+w + X_i)  
\det_{1 \le i, j \le n} \left( b_{i,j} \right).
$$
The only modification compared to the odd case is that the endpoints have to be replaced by the following set of two endpoints 
$E_j=\{(n-j+l+1,j-l-2),(n-j+l,j-l-1)\}$ and that the overall factor is 
$$
\prod_{i=1}^{n} X_i^{l} (X_i^{-1}+1+w+ X_i) , 
$$
given that the sign is taken care of as above. 

This concludes the proof of Theorem~\ref{RHScomplicated}.




\subsection{Right-hand side of \eqref{rincluded1}, second proof}

In this section, we sketch a second proof of Theorem~\ref{RHScomplicated}. 
It is closely related to the proof of Theorem~2.4 in \cite{nVASMDPP}. We only study the case 
$m=2l+1$. Again, we need to consider 
\begin{equation}
\label{odd2nd}  
\det_{1 \le i, j \le n} \left( X_i^{j-l-n-1}  (1+X_i)^{j-1} (1+ w X_i)^{n-j}  - X_i^{-j+l+n+1} (1+X_i^{-1})^{j-1}  (1+w X_i^{-1})^{n-j} \right).
\end{equation}

We have the following lemma.

\begin{lemma} 
\label{22}
For $n \ge 1$ and $l \in \mathbb{Z}$, the following identity holds.
\begin{multline*} 
\frac{1}{\prod_{1 \le i < j \le n} (X_j-X_i)(X_j^{-1} - X_i^{-1}) \prod_{i,j=1}^n (X_j^{-1} - X_i)} \\
\times
\det_{1 \le i, j \le n} \left( X_i^{j-l-n-1}  (1+X_i)^{j-1} (1+ w X_i)^{n-j}  - X_i^{-j+l+n+1} (1+X_i^{-1})^{j-1}  (1+w X_i^{-1})^{n-j} \right) \\
\times 
\det_{1 \le i, j \le n} \left( X_i^{j-l-n-1}  (1+X_i)^{j-1} (1+ w X_i)^{n-j}  + X_i^{-j+l+n+1} (1+X_i^{-1})^{j-1}  (1+w X_i^{-1})^{n-j} \right) \\
= \frac{(-1)^n}{2} \det_{1 \le i, j \le n} \left( \sum_{k,q} \binom{j-1}{-j+k+l+n-q+1} \binom{n-j}{q} w^q (h_{k-i+1}-h_{k+i-1-2n})  \right) \\
\times 
\det_{1 \le i,j \le n} 
\left( \sum_{k,q} \binom{j-1}{-j+k+l+n-q+1} \binom{n-j}{q} w^q 
(h_{k+i-1-n}+ h_{k-i+1-n}) \right)
\end{multline*} 
\end{lemma}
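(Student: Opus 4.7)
Proof plan. The idea is to recognise that the numerator on the left-hand side, $\det(A-B)\,\det(A+B)$ with $A_{ij}=P_j(X_i)$ and $B_{ij}=P_j(X_i^{-1})$ for $P_j(Y)=Y^{j-l-n-1}(1+Y)^{j-1}(1+wY)^{n-j}$, factors through the block determinant $\det\bigl(\begin{smallmatrix}A&B\\B&A\end{smallmatrix}\bigr)$. Viewing this $2n\times 2n$ matrix as $M_{ik}=F_k(Y_i)$ with variables $(Y_1,\ldots,Y_{2n})=(X_1,\ldots,X_n,X_1^{-1},\ldots,X_n^{-1})$ and functions $F_k=P_k$ for $k\le n$ and $F_{n+k}(Y)=P_k(Y^{-1})$ for $k\le n$, the denominator on the left-hand side of the lemma is precisely the Vandermonde $\prod_{1\le i<j\le 2n}(Y_j-Y_i)$. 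Hence Lemma~\ref{limit} applies directly and transforms the left-hand side into the single $2n\times 2n$ determinant $\det_{1\le i,k\le 2n}\bigl(F_k[Y_1,\ldots,Y_i]\bigr)$, whose entries are computable via
\[
\langle Y^m\rangle P_k(Y)=\sum_q\binom{k-1}{-k+m+l+n-q+1}\binom{n-k}{q}w^q,\quad \langle Y^m\rangle F_{n+k}(Y)=\langle Y^{-m}\rangle P_k(Y).
\]

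The next step is to exploit the $\mathbb{Z}/2$-symmetry $M_{ik}=M_{\sigma(i),\sigma(k)}$ of $M$ for the involution $\sigma$ that swaps $i$ with $n+i$ (equivalently $X_i\leftrightarrow X_i^{-1}$). The change of basis $\bigl(\begin{smallmatrix}I&I\\I&-I\end{smallmatrix}\bigr)$ applied to the rows and columns of the $2n\times 2n$ determinant block-diagonalises it into two $n\times n$ blocks, corresponding to the $+1$ and $-1$ eigenspaces of $\sigma$. The factor $(-1)^n/2$ in the statement reflects $\det\bigl(\begin{smallmatrix}I&I\\I&-I\end{smallmatrix}\bigr)^{-1}=(-2)^{-n}$, combined with a sign coming from the subsequent reordering of rows. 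Once the $h$'s have been brought into a form that is invariant under $X\leftrightarrow X^{-1}$ via the reciprocity identity~\eqref{reci} --- which on the $\sigma$-invariant set $\{X_1,X_1^{-1},\ldots,X_n,X_n^{-1}\}$ gives $h_k=-h_{-k-2n}$ --- the two diagonal blocks yield respectively the combinations $h_{k-i+1}-h_{k+i-1-2n}$ (from the symmetric part) and $h_{k+i-1-n}+h_{k-i+1-n}$ (from the antisymmetric part), matching the right-hand side of the lemma.

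The main obstacle lies in this block-diagonalisation: with the chosen ordering of the $Y_i$, rows $i\le n$ of $\det(F_k[Y_1,\ldots,Y_i])$ involve complete homogeneous polynomials in only $(X_1,\ldots,X_i)$, while rows $n+i'$ involve them in the non-$\sigma$-invariant set $(X_1,\ldots,X_n,X_1^{-1},\ldots,X_{i'}^{-1})$. Before the symmetric/antisymmetric splitting is visible, a sequence of row operations together with systematic applications of~\eqref{reci} is needed to rewrite all the $h$'s as Laurent polynomials in the symmetric set $\{X_j,X_j^{-1}\}$. This bookkeeping --- transposing the approach used in the proof of Theorem~2.4 of \cite{nVASMDPP} to the present $w$-weighted setting --- is where the genuine work lies, although each individual step is routine once the target form has been identified.
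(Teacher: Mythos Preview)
Your outline is correct and matches the paper's proof essentially step for step: the block identity $\det(A-B)\det(A+B)=\det\bigl(\begin{smallmatrix}A&B\\B&A\end{smallmatrix}\bigr)$, the passage to a $2n\times 2n$ determinant in the variables $(X_1,\ldots,X_n,X_1^{-1},\ldots,X_n^{-1})$, the application of Lemma~\ref{limit}, and then a block-triangularisation via reciprocity \eqref{reci} and row/column operations.

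Two small points worth noting. First, the ``main obstacle'' you identify is handled in the paper by a single clean move rather than ad hoc row operations: one multiplies from the left by the unitriangular matrix $\bigl(h_{j-i}(X_j,\ldots,X_{2n})\bigr)_{1\le i,j\le 2n}$, which via the identity $\sum_l h_{l-i}(X_l,\ldots,X_{2n})h_{k-l+1}(X_1,\ldots,X_l)=h_{k-i+1}(X_1,\ldots,X_{2n})$ converts every entry to a complete homogeneous polynomial in the full symmetric set; only then does the reciprocity $h_k=-h_{-k-2n}$ become available uniformly. Second, after this homogenisation the relevant row involution is $i\leftrightarrow 2n+2-i$ (reflection), not $i\leftrightarrow n+i$, and the factor $\tfrac12$ arises because the fixed row $i=n+1$ is not doubled by the row addition, while the $(-1)^n$ is pulled out when reciprocity is applied to the right block of columns --- so your attribution of $(-1)^n/2$ to $\det\bigl(\begin{smallmatrix}I&I\\I&-I\end{smallmatrix}\bigr)^{-1}$ is not quite accurate, though the end result is the same.
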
 


\begin{proof} 
We use 
$$
\det(A-B) \det(A+B) = 
\det \left( \begin{array}{c|c} A-B & B \\ \hline  0 & A+B \end{array} \right) =
\det \left( \begin{array}{c|c} A-B & B \\ \hline  B-A & A \end{array} \right) =
 \det \left( \begin{array}{c|c} A & B \\ \hline  B & A \end{array} \right)
$$
to see that the product of determinants on the left-hand side in the assertion of the lemma 
is equal to 
$$
\det \left( \begin{array}{c|c} 
\left( X_i^{j-l-n-1}  (1+X_i)^{j-1} (1+ w X_i)^{n-j}  \right)_{1 \le i, j \le n} & 
\left( X_i^{-j+l+n+1} (1+X_i^{-1})^{j-1}  (1+w X_i^{-1})^{n-j}  \right)_{1 \le i,j \le n} 
\\ \hline  
\left( X_i^{-j+l+n+1} (1+X_i^{-1})^{j-1}  (1+w X_i^{-1})^{n-j} \right)_{1 \le i,j \le n} & 
\left( X_i^{j-l-n-1}  (1+X_i)^{j-1} (1+ w X_i)^{n-j} \right)_{1 \le i,j \le n}
 \end{array} \right). 
$$
Setting $X_{n+i} =  X_i^{-1}$ for $i=1,2,\ldots,n$, we can also write this is as
$$
\det \left( \begin{array}{c|c} 
\left( X_i^{j-l-n-1}  (1+X_i)^{j-1} (1+ w X_i)^{n-j} \right)_{1 \le i \le 2n \atop 1 \le j \le n} & 
\left( X_i^{-j+l+n+1} (1+X_i^{-1})^{j-1}  (1+w X_i^{-1})^{n-j} \right)_{1 \le i \le 2n \atop 1 \le j \le n} 
\end{array} \right). 
$$
We apply Lemma~\ref{limit} to
$$
\frac{\det \left( \begin{array}{c|c} 
\left( X_i^{j-l-n-1}  (1+X_i)^{j-1} (1+ w X_i)^{n-j} \right)_{1 \le i \le 2n \atop 1 \le j \le n} & 
\left( X_i^{-j+l+n+1} (1+X_i^{-1})^{j-1}  (1+w X_i^{-1})^{n-j} \right)_{1 \le i \le 2n \atop 1 \le j \le n} 
\end{array} \right)}{\prod_{1 \le i < j \le 2n} (X_j-X_i)}
$$
and obtain 
\begin{multline*}
\det \left(  \left. 
\left( \sum_{k,q} \binom{j-1}{-j+k+l+n-q+1} \binom{n-j}{q} w^q h_{k-i+1}(X_1,\ldots,X_i)  \right)_{1 \le i \le 2n \atop 1 \le j \le n} \right|  \right. 
\\
\left.  \left( \sum_{k,q} \binom{j-1}{-j-k+l+n-q+1} \binom{n-j}{q} w^q h_{k-i+1}(X_1,\ldots,X_i)  \right)_{1 \le i \le 2n \atop 1 \le j \le n} 
  \right). 
\end{multline*} 
We multiply from the left with the following matrix 
$$
(h_{j-i}(X_j,X_{j+1},\ldots,X_{2n}))_{1 \le i,j \le 2n}
$$
with determinant $1$. For this purpose, note that  
$$
\sum_{l=1}^{2n} h_{l-i}(X_l,X_{l+1},\ldots,X_{2n}) h_{k-l+1}(X_1,\ldots,X_l)=h_{k-i+1}(X_1,\ldots,X_{2n}), 
$$
and, therefore, the multiplication results in 
\begin{multline*} 
 \det \left( \left.
\left( \sum_{k,q} \binom{j-1}{-j+k+l+n-q+1} \binom{n-j}{q} w^q h_{k-i+1}(X_1,\ldots,X_{2n})  \right)_{1 \le i \le 2n \atop 1 \le j \le n} \right| \right. \\ 
\left. \left( \sum_{k,q} \binom{j-1}{-j-k+l+n-q+1} \binom{n-j}{q} w^q h_{k-i+1}(X_1,\ldots,X_{2n})  \right)_{1 \le i \le 2n \atop 1 \le j \le n} 
\right). 
\end{multline*} 
We set $X_{i+n}=X_i^{-1}$ for $i=1,2,\ldots,n$ (so that the arguments of all complete symmetric functions are 
$(X_1,\ldots,X_n,X_1^{-1},\ldots,X_n^{-1})$) and omit the $X_i$'s now. Also note that, under this specialization, the 
denominator $\prod_{1 \le i < j \le 2n} (X_j-X_i)$ specializes to the denominator on the left-hand side in the assertion of 
the lemma.
$$
\det \left( \begin{array}{c|c} 
\left( \sum_{k,q} \binom{j-1}{-j+k+l+n-q+1} \binom{n-j}{q} w^q h_{k-i+1}  \right)_{1 \le i \le 2n \atop 1 \le j \le n} & 
\left( \sum_{k,q} \binom{j-1}{-j-k+l+n-q+1} \binom{n-j}{q} w^q h_{k-i+1}  \right)_{1 \le i \le 2n \atop 1 \le j \le n} 
\end{array} \right)
$$
With this specialization, we have $h_k=-h_{-k-2n}$ using \eqref{reci}. Therefore, the above is 
\begin{multline*}
(-1)^n
\det \left( \begin{array}{c|c} 
\left( \sum_{k,q} \binom{j-1}{-j+k+l+n-q+1} \binom{n-j}{q} w^q h_{k-i+1}  \right)_{1 \le i \le 2n \atop 1 \le j \le n} & 
\left( \sum_{k,q} \binom{j-1}{-j-k+l+n-q+1} \binom{n-j}{q} w^q h_{-k+i-1-2n}  \right)_{1 \le i \le 2n \atop 1 \le j \le n} 
\end{array} \right) \\
= (-1)^n
\det \left( \begin{array}{c|c} 
\left( \sum_{k,q} \binom{j-1}{-j+k+l+n-q+1} \binom{n-j}{q} w^q h_{k-i+1}  \right)_{1 \le i \le 2n \atop 1 \le j \le n} & 
\left( \sum_{k,q} \binom{j-1}{-j+k+l+n-q+1} \binom{n-j}{q} w^q h_{k+i-1-2n}  \right)_{1 \le i \le 2n \atop 1 \le j \le n} 
\end{array} \right).
\end{multline*} 
Now, for $j=1,2,\ldots,n$, we subtract the $(j+n)$-th column from the $j$-th column.
\begin{multline*} 
(-1)^n
\det \left(  \left.
\left( \sum_{k,q} \binom{j-1}{-j+k+l+n-q+1} \binom{n-j}{q} w^q (h_{k-i+1}-h_{k+i-1-2n})  \right)_{1 \le i \le 2n \atop 1 \le j \le n} \right| \right. \\
\left. \left( \sum_{k,q} \binom{j-1}{-j+k+l+n-q+1} \binom{n-j}{q} w^q h_{k+i-1-2n}  \right)_{1 \le i \le 2n \atop 1 \le j \le n} 
 \right)
\end{multline*} 
For $i=n+2,n+3,\ldots,2n$, we add the $(2n+2-i)$-th row to the $i$-th row. This gives a zero block for 
$\{(i,j) | n+1 \le i \le 2n, 1 \le j \le n\}$, since 
$$
 \sum_{k,q} \binom{j-1}{-j+k+l+n-q+1} \binom{n-j}{q} w^q (h_{k-i+1}-h_{k+i-1-2n}+h_{k-(2n+2-i)+1}-h_{k+(2n+2-i)-1-2n})=0.
$$
The lower right block is 
\begin{multline*} 
\det_{n+1 \le i \le 2n \atop 1 \le j \le n} 
\left( \sum_{k,q} \binom{j-1}{-j+k+l+n-q+1} \binom{n-j}{q} w^q 
(h_{k+i-1-2n}+ [i\not=n+1]h_{k+2n+2-i-1-2n}) \right) \\
= \det_{n+1 \le i \le 2n \atop 1 \le j \le n} 
\left( \sum_{k,q} \binom{j-1}{-j+k+l+n-q+1} \binom{n-j}{q} w^q 
(h_{k+i-1-2n}+ [i\not=n+1]h_{k-i+1}) \right) \\
= \frac{1}{2} \det_{1 \le i,j \le n} 
\left( \sum_{k,q} \binom{j-1}{-j+k+l+n-q+1} \binom{n-j}{q} w^q 
(h_{k+i-1-n}+ h_{k-i+1-n}) \right).
\end{multline*} 
This concludes the proof of the lemma.
\end{proof}

The identity in the lemma involves, up to factors, a product of two determinants on the left-hand side and also a product of two determinants on the right-hand side. This suggests that each of the determinants on the left-hand side equals up to factors a determinant on the right-hand side. This is indeed the case. 

More specifically, one can show that \eqref{odd2nd} is up to factors equal to 
$$
(-1)^n \prod_{i=1}^n (1+X_i) X_i^l  \det_{1 \le i,j \le n} 
\left( \sum_{k,q} \binom{j-1}{-j+k+l+n-q+1} \binom{n-j}{q} w^q 
(h_{k+i-1-n}+ h_{k-i+1-n}) \right).
$$
This can be shown by induction with respect to $n$ as suggested in \cite[Remark~7.4]{nVASMDPP}. Thus Lemma~\ref{22} would actually not have been necessary, however, it explains how the expression was obtained much better than the proof by induction. 

Using Lemma~7.5 from \cite{nVASMDPP}, we can conclude further that the expression is equal to 
\begin{multline*} 
(-1)^n \prod_{i=1}^n (1+X_i) X_i^l  \\ \times 
\det_{1 \le i,j \le n} 
\left( \sum_{k,q} \binom{j-1}{-j+k+l+n-q+1} \binom{n-j}{q} w^q 
h_{k+i-1-n}(X_1,X_1^{-1},\ldots,X_{n-i+1},X_{n-i+1}^{-1}) \right).
\end{multline*} 
Also this formula can be proven directly by induction with respect to $n$.

Our next goal would be to give a combinatorial interpretation of
$$
\sum_{k,q} \binom{j-1}{-j+k+l+n-q+1} \binom{n-j}{q} w^q 
(h_{k+i-1-n}(X_1,X_1^{-1},\ldots,X_{n-i+1},X_{n-i+1}^{-1}).
$$
We replace $i$ by $n-i+1$ and $q$ by $n-j-q$, and then we get rid of $k$ by setting $p=k+l+q+1$.
$$
\sum_{p,q} \binom{j-1}{p} \binom{n-j}{q} w^{n-j-q}
h_{p-q-l-1-i}(X_1,X_1^{-1},\ldots,X_{i},X_{i}^{-1}).
$$
This is equal to \eqref{path} when taking into account the definition of complete symmetric functions $h_k$ for negative $k$'s as given in 
\eqref{reci}.






\section{Explicit product formulas in case $(X_1,\ldots,X_n)=(1,\dots,1)$ and $w=0,-1$}
\label{outlook}

When evaluating the specializations of the LHS or RHS of \eqref{rincluded} at $(X_1,\ldots,X_n)=(1,\ldots,1)$ in the cases $w=0,-1$ for small values of $n$, one observes that the numbers involve only small prime factors, and, therefore, it is likely that they are expressible by product formulas. (A similar observation is true for the case $w=1$, but there the explanation is simple, since $\prod_{1 \le i < j \le n} (1+ w X_i+X_j + X_i X_j)$ on the left-hand side of \eqref{rincluded} is symmetric then.) For the LHS and the case $m=n-1$, these are unpublished conjectures of Florian Schreier-Aigner from 2018. For instance, in the case $w=-1$ and $m=n-1$, we obtain the following numbers
$$
1,4,60,3328,678912,\ldots = 2^{n(n-1)/2} \prod_{j=0}^{n-1} \frac{(4j+2)!}{(n+2j+1)!}
$$
that have also appeared in recent work of Di Francesco \cite{twenty}.

Related conjectures for arbitrary $m$ have now been proven and will appear in forthcoming work with Florian Schreier-Aigner. The approach we have been successful with involves the transformation of the bialternant formula on the RHS of \eqref{rincluded} into a Jacobi-Trudi type determinant. Then we can 
easily set $X_i=1$ and we were able to guess the LU-decompositions of the relevant matrices. Proving these guesses involves the evaluation of certain triple sums, which is possible using Sister Celine's algorithm \cite{celine} and the fabulous Mathematica packages provided by RISC \cite{risc}. We state the results next. First we deal with the case $w=0$ .

\begin{theorem} 
The specialization of the generating function of arrowed Gelfand-Tsetlin patterns with $n$ rows and strictly increasing non-negative bottom row where the entries are bounded by $m$ at 
$(X_1,\ldots,X_n)=(1,\ldots,1)$, $u=v=t=1$ and $w=0$ is
equal to 
$$
3^{\binom{n+1}{2}} \prod_{i=1}^n \frac{(2n+m+2-3i)_i}{(i)_i}.
$$ 
\end{theorem}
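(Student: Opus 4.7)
The plan is to use the identity \eqref{rincluded1} to convert the generating function on the left (which at $t=u=v=1$ equals the generating function of arrowed Gelfand--Tsetlin patterns by the discussion in Section~\ref{LHS}) into the determinantal right-hand side, and then evaluate at $X_1=\cdots=X_n=1$, $w=0$. Direct substitution is impossible because the numerator determinant and the denominator $\prod_i(1-X_i)\prod_{i<j}(X_j-X_i)(1-X_iX_j)$ both vanish. However, the machinery developed in the first proof of Theorem~\ref{RHScomplicated} converts this ratio into a Jacobi--Trudi-type determinant whose $(i,j)$-entry is a finite sum of complete homogeneous symmetric polynomials~$h_k$.

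Concretely, I would apply Lemma~\ref{limit} to pass from a bialternant to a single determinant, and then use Lemma~\ref{h} to rewrite each entry as a sum of values of $h_k(X_1,X_1^{-1},\ldots,X_i,X_i^{-1})$ weighted by binomial coefficients. Setting $w=0$ kills all but one value of $q$ in the double sum, leaving an entry $b_{i,j}(m)$ that is a single-index sum over $p$ of products of binomials times $h_{p-(n-j)-l-1-i}$ (and, in the even-$m$ case, a twin term with $l$ shifted by~$1$). Specializing $X_k=1$ replaces each $h_k(\underbrace{1,\ldots,1}_{2i})$ by an explicit binomial coefficient, producing a concrete integer matrix $M_n(m)$ whose entries are polynomials in $m$. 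The factor $3^{\binom{n+1}{2}}$ arises from evaluating the $\binom{n+1}{2}$ factors $(1+wX_i+X_j+X_iX_j)$ in the bialternant representation of Corollary~\ref{bialternant} at $X_i=X_j=1$, $w=0$, each contributing a~$3$.

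It then remains to prove $\det M_n(m)=\prod_{i=1}^n \frac{(2n+m+2-3i)_i}{(i)_i}$. My approach would be to compute $M_n(m)$ symbolically for $n=1,2,3,4,5$ and, from this data, guess an LU decomposition $M_n(m)=L_n(m)\,U_n(m)$ with $L_n$ lower unitriangular and $U_n$ upper triangular, where the diagonal entries of $U_n$ are precisely $\frac{(2n+m+2-3i)_i}{(i)_i}$ and the off-diagonal entries of both $L_n$ and $U_n$ admit closed forms in terms of ratios of Pochhammer symbols. Once guessed, verifying $L_n(m)U_n(m)=M_n(m)$ reduces to a finite list of triple-sum identities involving products of binomial coefficients and Pochhammer symbols, each provable mechanically by Sister Celine's algorithm \cite{celine} as automated in the RISC Mathematica packages \cite{risc}.

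The principal obstacle is the guessing step. Although the product formula dictates the diagonal of $U_n(m)$, the off-diagonal entries of $L_n(m)$ and $U_n(m)$ are not constrained a priori and must be discovered empirically; experience with analogous proofs in \cite{nVASMDPP} suggests that the correct ansatz involves products of linear factors of the form $(a_i m + b_i)$ together with simple rational functions in $i$ and $j$. Once the correct factorization is in hand, the Celine-algorithm verification is essentially routine; a secondary, minor technical point is uniformly handling the parities of $m$, which is automatic because the Jacobi--Trudi form from the first proof of Theorem~\ref{RHScomplicated} treats both cases by the same matrix template (with $l=\lfloor m/2\rfloor$ and an extra shift term appearing only when $m$ is even).
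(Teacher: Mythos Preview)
Your proposal is correct and follows essentially the same approach that the paper outlines in Section~\ref{outlook}: convert the bialternant on the right-hand side of \eqref{rincluded} into a Jacobi--Trudi-type determinant (so that $X_i=1$ may be substituted), guess the LU-decomposition of the resulting integer matrix, and verify the guess via Sister Celine's algorithm using the RISC packages. The paper does not give the proof in full detail (it is deferred to forthcoming work with Schreier-Aigner), but the sketch there matches your plan step for step; your remark that the factor $3^{\binom{n+1}{2}}$ comes from the $\binom{n+1}{2}$ factors $(v+wX_i+tX_j+uX_iX_j)$ in Corollary~\ref{bialternant} is heuristically right, though strictly speaking those factors sit inside the antisymmetrizer and the clean power of~$3$ only emerges after the Jacobi--Trudi conversion.
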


Now we turn to the case $w=-1$.

\begin{theorem} 
The specialization of the generating function of arrowed Gelfand-Tsetlin patterns with $n$ rows and strictly increasing non-negative bottom row where the entries are bounded by $m$ at 
$(X_1,\ldots,X_n)=(1,\ldots,1)$, $u=v=t=1$ and $w=-1$ is
equal to 
$$
2^n \prod_{i=1}^n \frac{(m-n+3i+1)_{i-1} (m-n+i+1)_i}{\left(\frac{m-n+i+2}{2} \right)_{i-1} (i)_i}.
$$
\end{theorem}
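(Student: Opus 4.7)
The plan is to start from the bialternant formula on the right-hand side of \eqref{rincluded} specialized at $w=-1$, namely
\[
\frac{\det_{1 \le i,j \le n}\bigl(X_i^{j-1}(1+X_i)^{j-1}(1-X_i)^{n-j} - X_i^{m+2n-j}(1+X_i^{-1})^{j-1}(1-X_i^{-1})^{n-j}\bigr)}{\prod_{i=1}^n (1-X_i)\prod_{1\le i<j\le n}(1-X_iX_j)(X_j-X_i)},
\]
and to rewrite it as a Jacobi--Trudi type determinant so that the specialization $X_i\to 1$ can be carried out safely. Taking that limit directly in the bialternant is problematic, because the factors $(X_j-X_i)$, $(1-X_i)$ and $(1-X_iX_j)$ in the denominator all vanish at $X_i=1$ and must be cancelled against the factors $(1-X_i)^{n-j}$ in the numerator and against appropriate vanishings of the determinant.

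First, I would apply Lemma~\ref{limit} to strip the Vandermonde $\prod_{i<j}(X_j-X_i)$ out of the bialternant and turn it into a determinant whose entries are linear combinations of complete homogeneous symmetric functions $h_k(X_1,\ldots,X_i)$, in the spirit of how the entries \eqref{path2} were produced in the second proof of Theorem~\ref{RHScomplicated}. Next, I would absorb the remaining denominator factors $\prod_i(1-X_i)\prod_{i<j}(1-X_iX_j)$ into the matrix by suitable column and row manipulations, producing a Jacobi--Trudi type determinant in which the specialization $(X_1,\ldots,X_n)=(1,\ldots,1)$ is manifestly regular. Using $h_k(1,\ldots,1)=\binom{k+n-1}{n-1}$, together with the reciprocity \eqref{reci} for negative $k$, the entries become explicit finite sums of binomial coefficients depending only on $m$ and $n$.

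At this point the claim reduces to evaluating $\det_{1\le i,j\le n} B_{i,j}(m,n)$ for a completely explicit integer matrix $B(m,n)$. The strategy is to guess an LU-decomposition $B=L\cdot U$ with $L$ lower unitriangular and $U$ upper triangular, such that the product of the diagonal entries of $U$ recovers the claimed closed form
\[
2^n \prod_{i=1}^n \frac{(m-n+3i+1)_{i-1}(m-n+i+1)_i}{\bigl(\tfrac{m-n+i+2}{2}\bigr)_{i-1}(i)_i}.
\]
The guesses for the entries of $L$ and $U$ can be produced from small cases by computer experimentation. Proving that $LU=B$ then amounts to verifying a family of identities $\sum_{k}L_{i,k}U_{k,j}=B_{i,j}$; after unfolding the definitions, each such identity is a double or triple hypergeometric sum of binomial and Pochhammer terms, which I would verify automatically using Sister Celine's algorithm \cite{celine} together with the RISC Mathematica packages \cite{risc}.

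The hard part will be obtaining the correct explicit guesses for $L$ and $U$: the closed form involves the half-integer Pochhammer symbols $(\tfrac{m-n+i+2}{2})_{i-1}$, indicating a genuine parity dependence on $m-n$ and suggesting that $L$ and $U$ themselves will carry somewhat intricate hypergeometric expressions, possibly splitting into cases according to the parity of $m-n$. Once the guesses are correct, the mechanical verification is expected to go through by the same methodology as used for the companion $w=0$ theorem stated just above, even if the Sister Celine certificates for the triple sums are lengthy.
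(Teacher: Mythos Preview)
Your proposal follows essentially the same approach that the paper outlines for this result: transform the bialternant on the right-hand side of \eqref{rincluded} into a Jacobi--Trudi type determinant (via Lemma~\ref{limit}), specialize $X_i=1$, guess the LU-decomposition of the resulting explicit matrix, and verify the guesses by reducing to triple sums handled by Sister Celine's algorithm and the RISC packages. Note that the paper itself does not give a full proof here but only sketches exactly this strategy, deferring the details to forthcoming work with Schreier-Aigner; your proposal is at the same level of detail as the paper's own account.
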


Other future work concerns extending results that have been obtained using \eqref{littlewoodASM1} by replacing this identity by Theorem~\ref{boundedrQ} or specializations thereof.

\appendix

\section{Aspects of the combinatorics of the classical Littlewood identity and its bounded version}
\label{combclassical}

\subsection{The combinatorics of the classical Littlewood identity \eqref{littlewood}} 
We start by reviewing the classical combinatorial proof of  \eqref{littlewood}: one can interpret the Schur polynomial 
$s_{\lambda}(X_1,\ldots,X_n)$ as the (multivariate) generating function of semistandard Young tableaux of shape $\lambda$ with entries in $\{1,2,\ldots,n\}$, where the exponent of $X_i$ is just the number of occurrences of $i$ in a given semistandard Young tableau. Then the left-hand side of \eqref{littlewood} is simply the generating function of all such semistandard Young tableaux of any shape $\lambda$. The right-hand side can be interpreted as the generating function of symmetric $n \times n$ matrices with non-negative integer entries: expanding $\frac{1}{1-X_i X_j} = \sum_{a_{i,j} \ge 0} (X_i X_j)^{a_{i,j}}$ corresponds to the entries $a_{i,j}=a_{j,i}$ for $i<j$, while expanding $\frac{1}{1-X_i} = \sum_{a_{i,i} \ge 0} X_i^{a_{i,i}}$ corresponds to the diagonal entries $a_{i,i}$. Then such a matrix determines a two-line array with $a_{i,j}$ occurrences of the pair $\left( i \atop j \right)$ such that the pairs are ordered lexicographically. 
The semistandard Young tableau $P$ is simply obtained by applying the Robinson-Schensted-Knuth (RSK) algorithm to the bottom row of the two-line array. It suffices to construct the so-called insertion tableau because by the symmetry of the RSK algorithm, it is equal to the recording tableau. Thus, to reconstruct the two-line array, we apply the inverse Robinson-Schensted-Knuth algorithm to $(P,P)$. 

\subsection{Simpler decription of the classical bijection} 
Now we discuss a related, but simpler bijective proof of \eqref{littlewood} that does not invoke the symmetry of
the RSK algorithm. After its description, we will actually discover that ``only'' the description of the 
algorithm is simpler as we will show that the bijection agrees with the classical one. However, this second version could be of interest for developing the combinatorics of  \eqref{littlewoodASM2} and \eqref{rincluded}.

As discussed above, the right-hand side of \eqref{littlewood} can be interpreted as the generating function of symmetric $n \times n$ matrices $A=(a_{i,j})_{1 \le i,j \le n}$ with non-negative integer entries. They are also equivalent to lexicographically ordered two-line arrays with the property that the upper entry in each column is no smaller than the lower entry: For $i \le j$, let $a_{i,j}=a_{j,i}$ be the number of columns of type $\left( j \atop i \right)$. Comparing to the two-line array from the classical proof, we just have to delete all columns $\left( j \atop i \right)$ with $i > j$.

\bigskip

Now we apply the following variant of RSK, which transforms a lexicographically ordered two-line array such that no upper element is smaller than the corresponding lower element into a semistandard Young tableau.
\begin{itemize} 
\item As usual, we work through the columns of the two-line array from left to right. 
\item Suppose $\left( j \atop i \right)$, $i \le j$, is our current column. We use the usual RSK algorithm to insert $i$ in to the current tableau.
\item If $i < j$, we additionally place $j$ into the tableau as follows: Suppose that the insertion of $i$ ends with adding an entry to row $r$, then we add $j$ to row $r+1$ in the leftmost column where there is no entry so far. 
\end{itemize} 

\begin{example}
To give an example, observe that the symmetric matrix 
$$
A = \begin{pmatrix} 1 & 0 & 2 & 1 \\ 0 & 0 & 1 & 4 \\ 2 & 1 & 2 & 0 \\ 1 & 4 & 0 & 1 \end{pmatrix} 
$$
is equivalent to the following two-line array 
$$
\left( 
\begin{array}{cccccccccccc}
1 & 3 & 3 & 3 & 3 & 3 & 4 & 4 & 4 & 4 & 4 & 4 \\
1 & 1 & 1 & 2 & 3 & 3 & 1 & 2 & 2 & 2 & 2 & 4
\end{array} 
\right)
$$
and that the algorithm results in the following semistandard Young tableau.
$$
\begin{ytableau}
1 & 1  & 1 & 1 & 2 & 2 & 2 & 2 & 4 \\
2 & 3  & 3 & 3 & 3 & 4 & 4 \\
3 & 4 & 4 \\
4 
\end{ytableau}
$$
\end{example} 

\emph{Well-definedness of the algorithm.} 
We argue that the resulting tableau is always a semistandard Young tableau. For this, we need an observation that  
can be deduced from 
\cite[Lemma 7.11.2 (b)]{Sta99}, which says that if we insert a weakly increasing sequence of positive integers
$i_1 \le i_2 \le \ldots \le i_r$ from left to right into a semistandard Young tableau, then the ``insertion path'' 
of an earlier element lies strictly to the left of a later element. Moreover, for $p < q$, the insertion path of 
$i_p$ ends in a row below and to the left of the end of the insertion path of $i_q$, or in the same row to the left of the end of the insertion 
path of $i_q$. This implies that if the $i_k$'s are the bottom elements of the columns with top element $j$ in the two line array, then, if the insertion path of an $i_k$ with $i_k < j$ ends in row $r$, 
the elements in row $1,2,\ldots,r$ are in $\{1,2,\ldots,j-1\}$. 

We show by induction on the number of elements in the tableau that our algorithm always leads to a 
semistandard Young tableau.
Now, if we insert the element $i$ of the column $\left( j \atop i \right)$ using the classical RSK algorithm into the current semistandard Young tableau, then we obtain another semistandard Young tableau, see \cite[Lemma 7.11.3]{Sta99}.  Placing the top element $j$ in case  $j>i$ into the next row will also not destroy the columnstrictness as the elements above the row of $j$ are in $\{1,2,\ldots,j-1\}$, as discussed in the previous paragraph.

\begin{remark}
Note that from the proof of well-definedness it follows that we may also add all top $j$'s at once after we have inserted the bottom entries of columns that have $j$'s as top entries in our algorithm: Consider the skew shape $\lambda / \mu$ where $\mu$ is the shape of the tableau that we had before the insertion of all these bottom entries and $\lambda$  is the shape of the tableau we obtain after the insertion (but not yet adding the $j$'s from the top row of the two-line array) except that we exclude in the latter tableau all $j$'s that come from the bottom of the two-line array. Now, if there are $c$ cells in row $r$ of the skew shape then we add $c$ $j$'s in row $r+1$ to the semistandard Young tableaux with the bottom entries inserted, now including also those that come from columns $\left( j \atop j \right)$. This is because the cells of the skew shape are added to the tableau in the course of insertion from bottom to top and within a row from left to right.
\end{remark} 

\emph{Reverse algorithm.} 
We construct the inverse algorithm inductively, where the induction is with respect to the largest element in the tableau.
Suppose $n$ is the largest element in the semistandard Young tableau, then we want to recover the part of the two-line array that has $n$ in the top row (which is an ending section of the array). Suppose 
$$
\left( 
\begin{array}{cccc}
n & n & \ldots & n \\
i_1 & i_2 & \ldots & i_s
\end{array} 
\right)
$$
is this section, which implies $i_1 \le i_2 \le \ldots \le i_s$, and let $r$ be maximal with $i_r < n$ so that 
$i_{r+1} = i_{r+2}= \ldots = i_s = n$. Now, from the algorithm it follows that $s-r$ is just the number $n$'s in the top row of the tableau and we can delete these elements. Again it follows from \cite[Lemma 7.11.2 (b)]{Sta99} that we need to determine the number $u$ of $n$'s in the second row, remove them, and the apply the inverse bumping algorithm to the last $u$ element in the first row, from right to left (which means that we just remove them and put them in the bottom row of the two-line array). We continue by counting (and removing) the $n$'s in the third row, and, if $v$ is this number, apply the inverse bumping to the last $v$ elements in the second row, from right to left. We work through the rows from top to bottom in this way.

\medskip

Finally, we discover that this algorithm is just another description of the classical bijection.

\begin{prop}
The algorithm just described establishes the same bijection between symmetric $n \times n$ matrices $A$ with non-negative integer entries and semistandard Young tableaux with entries in $\{1,2,\ldots,n\}$ as the classical one.
\end{prop}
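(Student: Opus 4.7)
The plan is to prove the proposition by showing that the inverse of the new algorithm and the inverse of the classical bijection (which is reverse RSK applied to the pair $(T,T)$) produce the same half two-line array from any SSYT $T$. Since both forward maps are bijections, agreement of the inverses on all tableaux is equivalent to agreement of the forward maps on all symmetric matrices.

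I would proceed by downward induction on $k$ for $k = n, n-1, \dots, 1$, where at each level I show that both inverse algorithms emit the same multiset of columns $\binom{k}{i}$ (with $i \le k$, for the half array) and leave the tableau in the same reduced state. This is well-defined because both inverses are naturally stratified by the current maximum value of the tableau: the reverse new algorithm (as described in the paper) deals with $k$'s row-by-row starting from row $1$, while the reverse classical algorithm removes $k$-cells of $Q=T$ in order of decreasing column index and outputs columns with top value $k$ before moving on to the next largest value.

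Within a single level $k$, I would exploit the horizontal-strip structure of the $k$-cells in a SSYT: the $k$-cells in each row form a contiguous block at the right end, and by strictness of columns their column indices strictly decrease from row to row. Consequently, processing $k$-cells from rightmost column to leftmost (classical) is equivalent to processing row $1$'s first, then row $2$'s, and so on (new algorithm); within each row both orders agree (right to left). The row-$1$ $k$-cells are handled identically by both algorithms: remove them and output $\binom{k}{k}$'s. For row-$\ell$ $k$-cells with $\ell \ge 2$, the new algorithm reverse-bumps the last $u_\ell$ entries of row $\ell-1$ upward, while the classical algorithm replaces row $\ell-1$'s rightmost entry less than $k$ with $k$ and reverse-bumps the displaced value. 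The values displaced from row $1$, which constitute the bottom entries of the output columns, should coincide in both procedures.

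The main obstacle is this last step: the classical algorithm's intermediate tableau states can carry $k$'s that the new algorithm's states do not, so a direct step-by-step comparison fails. The key technical claim is that the chain of reverse-bumping steps initiated by the $u_\ell$ many $k$-cells in row $\ell$ produces, in each algorithm, the same sequence of values ejected from row $1$. Proving this cleanly---by tracking how each ``temporary'' $k$ placed in classical's tableau during row-$\ell$ processing is itself reverse-bumped out as processing continues, leaving a net effect identical to the new algorithm's cleaner removal---will complete the induction and hence the proof.
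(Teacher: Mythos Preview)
Your plan takes a genuinely different route from the paper's proof: the paper works in the forward direction, inducts on $n$, and at the decisive moment invokes the \emph{symmetry} of RSK (insertion tableau $=$ recording tableau for symmetric input) to locate the $n$'s, whereas you propose to compare the two \emph{inverse} maps step by step without using that black box. This is a legitimate and in some ways more transparent strategy.

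That said, what you have written is a plan, not a proof. You correctly isolate the ``main obstacle''---that classical reverse RSK temporarily re-inserts $k$'s into upper rows, so the intermediate states of the two inverses do not literally coincide---and you state the ``key technical claim'' that the values ejected from row $1$ nevertheless agree, but you do not prove it. The missing ingredient is a precise invariant relating the two intermediate states. Concretely: if $P$ denotes the classical state and $N$ the new-algorithm state after both inverses have processed the same initial segment of $k$-cells (row $1$ right to left, then row $2$, and so on), then $P$ and $N$ agree on every non-$k$ entry, and $P$ differs from $N$ only in that it carries some extra $k$'s at the right end of certain rows. This invariant is straightforward to maintain, because every value that is reverse-bumped through a row during level-$k$ processing is strictly less than $k$; hence in $P$ the search for ``rightmost entry strictly less than $x$'' skips over the trailing $k$'s and lands on exactly the same position as in $N$, displacing the same value and producing the same output. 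Once you state and verify this invariant, your argument goes through cleanly; without it, the proposal stops precisely where the real work begins.
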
 

\begin{proof}[Sketch of proof]
The proof is by induction with respect to $n$. For $n=1$, there is nothing to prove since the two algorithms coincide in this case.

We perform the step from $n-1$ to $n$. We can assume $a_{n,n}=0$ since increasing $a_{n,n}$ has the same effect in both algorithms as in both cases we just add $a_{n,n}$ columns $\left( n \atop n \right)$ at the end of the 
two-line arrays and apply the same procedure to these columns, in both cases at the end of the algorithm.

Suppose $B$ is the restriction of $A$ to the first $n-1$ rows and the first $n-1$ columns. By the induction hypothesis, we know that $B$ is transformed into the same semistandard Young tableau $P$ under both algorithms.
Moreover, let $a$ be the two-line array that corresponds to $A$ in the classical algorithm and $a'$ be the initial section that disregards all columns with an $n$ in the top row. Clearly, we can obtain $P$ also by applying RSK to the bottom row of $a'$ and then deleting all $n$'s because the two-line array $b$ that corresponds to $B$ under the classical algorithm is obtained from $a'$ by deleting all columns that have an $n$ in the bottom row and the $n$'s will never bump an element, but at most be bumped in final steps of insertions. Let $Q$ denote the semistandard Young tableau where the $n$'s are kept (i.e., what we obtain after applying RSK to the bottom row of $a'$). 

Now note that the final sections of the two-line array with $n$ in the top row agree for both two-line arrays, and denote it by $s$. 
Since we assume $a_{n,n}=0$, the bottom row of $s$ does not contain any $n$.
It is also clear that we will obtain the same tableau if we apply the following two different procedures: Insert the bottom row of $s$ to $P$, or, insert the bottom row of $s$ to $Q$ and then delete the $n$'s. This is because $P$ and $Q$ agree on all entries different from $n$ and  $n$'s are at most bumped in final steps in the second case.

This implies that the two procedures (namely, the ``classical'' one and the one that is the subject of this section) result in the same two tableaux when disregarding the $n$'s. Therefore, it remains to show that they also agree on the $n$'s. Now we use the fact that the positions of the $n$'s (as for any other entry) can also be determined by considering the recording tableau (which is due to the symmetry of the classical RSK algorithm), in particular we need to study how the recording tableau is built up 
when adding $s$ since this is the only time when $n$'s are added to the recording tableau. These $n$'s are added in the final cells of the insertion paths when inserting the bottom row of $s$ into $Q$. Such an insertion path can either agree with the corresponding insertion path in $P$ or it has one additional step where an $n$ gets bumped. As we already know that up to the $n$'s we obtain the same tableaux in both cases, we are always in the case that $n$'s are bumped and this proves the assertion.
\end{proof}

\subsection{RSK in terms of Gelfand-Tsetlin patterns} 
\label{gtpattern}

It is well-known that semistandard Young tableaux can be replaced by Gelfand-Tsetlin patterns 
in the definition of Schur polynomials (and thus in the combinatorial interpretation of the left-hand sides of \eqref{littlewood} and \eqref{littlewoodbounded}) as there is an easy bijective correspondence, which will be described next. This point of view is valuable for us because the left-hand sides of our Littlewood-type identities can also be interpreted combinatorially as generating functions of Gelfand-Tsetlin-pattern-type objects (see Section~\ref{LHS}). The purpose of the current section is to indicate how the classical RSK algorithm works on (classical) Gelfand-Tsetlin patterns, with the hope that something similar can be established for our variant (i.e., arrowed Gelfand-Tsetlin patterns, see Section~\ref{agtp}).

A Gelfand-Tsetlin pattern is a finite triangular array of integers with centered rows as follows
$$
\begin{array}{ccccccc}
&&& a_{1,1} &&& \\
&&a_{2,1} && a_{2,2} \\
& \iddots && \ldots && \ddots & \\
a_{n,1} && a_{n,2} && \ldots && a_{n,n} 
\end{array} 
$$
such that we have a weak increase in $\nearrow$-direction as well as in $\searrow$-direction, i.e., $a_{i+1,j} \le a_{i,j} \le a_{i+1,j+1}$, for all $1 \le j \le i \le n-1$. The bijection between semistandard Young tableaux of shape 
$(\lambda_1,\lambda_2,\ldots,\lambda_n)$ (we allow zero entries here) and parts in $\{1,2,\ldots,n\}$, and Gelfand-Tsetlin patterns with bottom row $(\lambda_n,\lambda_{n-1},\ldots,\lambda_1)$ is as follows: reading the $i$-th row of a Gelfand-Tsetlin pattern 
in reverse order gives a partition, and this is precisely the shape constituted by the entries less than or equal to $i$ in the corresponding semistandard Young tableau. Under this bijection, the number of entries equal to $i$ in the semistandard Young tableau is equal to the difference of the $i$-th row sum and the $(i-1)$-st row sum in the Gelfand-Tsetlin pattern. Therefore, 
$$
s_{(\lambda_1,\ldots,\lambda_n)}(X_1,\ldots,X_n) = \sum \prod_{i=1}^n X_i^{\sum_{j=1}^i a_{i,j} - \sum_{j=1}^{i-1} a_{i-1,j}}, 
$$
where the sum is over all Gelfand-Tsetlin patterns $(a_{i,j})_{1 \le j \le i \le n}$ with bottom row 
$(\lambda_n,\lambda_{n-1},\ldots,\lambda_1)$. 

To give an example, observe that the Gelfand-Tsetlin pattern corresponding to the following 
semistandard Young tableaux 
\begin{equation} 
\label{SYTex}
\begin{ytableau} 
1 & 1 & 1 & 2 & 2 & 3 & 5 \\
2 & 2 & 4 & 5 & 7 & 8 \\
4 & 5 & 5 & 7 & 8 \\
5 & 6 & 6 & 8 \\
7 & 8
\end{ytableau} 
\end{equation} 
is 
$$
\begin{array}{ccccccccccccccc}
&&&&&&& 3 &&&&&&& \\
&&&&&& 2 && 5 &&&&&& \\ 
&&&&& 0 && 2 && 6 &&&&& \\
&&&& 0 && 1 && 3 && 6 &&&& \\
&&& 0 && 1 && 3 && 4 && 7 &&& \\
&& 0 && 0 && 3 && 3 && 4 && 7 && \\
& 0 && 0 && 1 && 3 && 4 && 5 && 7 & \\
0 && 0 && 0 && 2 && 4 && 5 && 6 && 7 
\end{array}.
$$

Now suppose we use the RSK algorithm to insert the integer $m$ into a semistandard Young tableau. On the corresponding Gelfand-Tsetlin pattern, we have to do the following.
\begin{itemize} 
\item If the number $n$ of rows of the pattern is less than $m$ and the bottom row of the pattern is 
$k_1,\ldots,k_n$, then we add rows of the form $0,\ldots,0,k_1,\ldots,k_n$ with the appropriate number of $0$'s until we have $m$ rows.
\item Now we start a path in the pattern that starts at the last entry in row $m$ with (unit) steps in 
$\searrow$-direction or $\swarrow$-direction progressing from one entry to a neighboring entry in this direction. The rule is as follows: Whenever the $\searrow$-neighbor of the current entry is equal to the current entry we extend our path to the next entry 
in $\searrow$-direction, otherwise we go to the next entry in $\swarrow$-direction. We continue with this path until we reach the bottom row.
\item Finally, we add $1$ to all entries in the path.
\end{itemize}

To give an example, if we use RSK to insert $3$ into the semistandard Young tableau from \eqref{SYTex}, we obtain the following tableau, where the insertion path is indicated in red.
$$
\begin{ytableau} 
1 & 1 & 1 & 2 & 2 & 3 & \color{red} 3 \\
2 & 2 & 4 & 5 & \color{red} 5 & 8 \\
4 & 5 & 5 & 7 & \color{red} 7 \\
5 & 6 & 6 & 8 & \color{red} 8 \\
7 & 8
\end{ytableau} 
$$
On the corresponding Gelfand-Tsetlin pattern, we obtain the following.
$$
\begin{array}{ccccccccccccccc}
&&&&&&& 3 &&&&&&& \\
&&&&&& 2 && 5 &&&&&& \\ 
&&&&& 0 && 2 && \color{red} 7  &&&&& \\
&&&& 0 && 1 && 3 && \color{red} 7 &&&& \\
&&& 0 && 1 && 3 && \color{red} 5 && 7 &&& \\
&& 0 && 0 && 3 && 3 && \color{red} 5 && 7 && \\
& 0 && 0 && 1 && 3 && \color{red} 5  && 5 && 7 & \\
0 && 0 && 0 && 2 && \color{red} 5 && 5 && 6 && 7 
\end{array}
$$
It corresponds to the tableau with the $3$ inserted.

Now suppose in our simplified algorithm to prove \eqref{littlewood}, we ``insert'' the column  $\left( j \atop i \right)$ into the Gelfand-Tsetlin pattern. At this point, the Gelfand-Tsetlin pattern should have $j$ rows. Then we apply the algorithm just described to insert $i$ into the pattern. To insert also $j$ (in case $j \not=i$), add $1$ to the entry immediately left of the entry that is the end of the path that is induced by the insertion of $i$. Whenever we progress to the first column 
with $j$ as top element in the two-line array, we add one row to the Gelfand-Tsetlin by copying the current bottom row and adding one $0$ at the beginning.

\subsection{The right-hand side of the bounded Littlewood identity \eqref{littlewoodbounded}}

The irreducible characters of the special orthogonal group $SO_{2n+1}(\mathbb{C})$ associated with the 
partition $\lambda=(\lambda_1,\ldots,\lambda_n)$ are 
$$
so^{\text{odd}}_{\lambda}(X_1,\ldots,X_n)=\prod_{i=1}^{n} X_i^{n-1/2}
\frac{\det_{1 \le i, j \le n} \left( X_i^{-\lambda_j-n+j-1/2} - X_i^{\lambda_j+n-j+1/2} \right)}
{(1 + [\lambda_n=0])\prod_{i=1}^n (1-X_i) \prod_{1 \le i<j \le n} (X_j-X_i)(1-X_i X_j)}, 
$$
see \cite[Eq. (24.28)]{FulHar91}.
These characters can be seen as generating functions of certain halved Gelfand-Tsetlin patterns that are defined next. This can even be extended to so-called half-integer partitions as will be explained also. A half-integer partition is a finite, weakly decreasing sequence of positive half-integers. 

\begin{definition} 
For a positive integer $n$, a $2n$-split orthogonal (Gelfand-Tsetlin) pattern is an array of non-negative integers or non-negative half-integers with $2n$ rows of lengths $1,1,2,2,\ldots,n,n$, which are aligned as follows for $n=3$
$$
\begin{array}{cccccc}
a_{1,1}  &              &              &            &             & \\
             & a_{2,1}  &              &            &             & \\
a_{3,1} &               & a_{3,2} &             &             & \\
             & a_{4,1} &              & a_{4,2} &             & \\
a_{5,1} &              & a_{5,2}  &             & a_{5,3} & \\
             & a_{6,1} &              & a_{6,2} &              & a_{6,3} 
\end{array} ,
$$
such that the entries are weakly increasing along $\nearrow$-diagonals and $\searrow$-diagonals, and in which the entries, except for the first entries
in the odd rows (called odd starters), are either all non-negative integers or all non-negative half-integers. Each starter is independently either a 
non-negative integer or a non-negative half-integer. The weight of a $2n$-split orthogonal pattern is 
$$
\prod_{i=1}^n X_i^{r_{2i}-2 r_{2i-1}+r_{2i-2}}
$$
where $r_i$ is the sum of entries in row $i$ and $r_0=0$. 
\end{definition} 

The following theorem is the first part of Theorem~7.1 in 
\cite{Pro94}.

\begin{theorem} 
\label{split}
Let $\lambda=(\lambda_1,\ldots,\lambda_n)$ be a partition (allowing zero entries) or a half-integer partition. Then the generating function of $2n$-split orthogonal patterns with respect to the above weight that have $\lambda$ as bottom row, written in increasing order,  
is 
$$
\prod_{i=1}^{n} X_i^{n-1/2}
\frac{\det_{1 \le i, j \le n} \left( X_i^{-\lambda_j-n+j-1/2} - X_i^{\lambda_j+n-j+1/2} \right)}
{(1 + [\lambda_n=0])\prod_{i=1}^n (1-X_i) \prod_{1 \le i<j \le n} (X_j-X_i)(1-X_i X_j)}.
$$
\end{theorem}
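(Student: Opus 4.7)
Plan: I would argue by induction on $n$. For the base case $n=1$, a $2$-split orthogonal pattern reduces to a single starter $a_{1,1}$ with $a_{2,1}=\lambda_1$, where $a_{1,1}$ ranges independently over non-negative integers and half-integers in $[0,\lambda_1]$. The weight $X_1^{\lambda_1-2a_{1,1}}$ sums to a finite two-sided geometric series whose closed form matches the right-hand side of the claim, with the $(1+[\lambda_1=0])$ factor accommodating the degenerate case $\lambda_1=0$.

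For the inductive step I would peel off the bottom two rows. Fix the bottom row $\lambda$ and let $\mu=(\mu_1,\dots,\mu_n)$ denote row $2n-1$ and $\nu=(\nu_1,\dots,\nu_{n-1})$ denote row $2n-2$. The interlacing forces $\lambda_{i-1}\le\mu_i\le\lambda_i$ (with $\lambda_0:=0$) and $\mu_i\le\nu_i\le\mu_{i+1}$, while $\mu_1$ is a starter ranging independently over integers and half-integers in $[0,\lambda_1]$. Rows $1,\dots,2n-2$ then form a $2(n-1)$-split orthogonal pattern with bottom row $\nu$, whose generating function is $so^{\mathrm{odd}}_{\nu}(X_1,\dots,X_{n-1})$ by the induction hypothesis. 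Since the two peeled rows contribute exactly $X_n^{|\lambda|-2|\mu|+|\nu|}$ to the weight, the theorem reduces to the identity
\begin{equation*}
so^{\mathrm{odd}}_{\lambda}(X_1,\dots,X_n)=\sum_{\mu,\nu}X_n^{|\lambda|-2|\mu|+|\nu|}\,so^{\mathrm{odd}}_{\nu}(X_1,\dots,X_{n-1}),
\end{equation*}
which is the iterated branching $SO(2n+1)\downarrow SO(2n)\downarrow SO(2n-1)$: here $\mu$ runs over both integer and spin highest weights of $SO(2n)$ appearing in the restriction, and $\nu$ over highest weights of $SO(2n-1)$.

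To prove this branching identity I would substitute the Weyl character formula on both sides. Expanding the $n\times n$ numerator determinant of $so^{\mathrm{odd}}_{\lambda}$ by Laplace along the row containing $X_n$ separates the $X_n$-dependence; the sum over the starter $\mu_1$ across its integer and half-integer range produces a closed-form geometric series yielding the factor $(1-X_n)^{-1}$ together with a $(X_n^{\lambda_1+1/2}-X_n^{-\lambda_1-1/2})$ piece; the sums over $\mu_2,\dots,\mu_n$ and over $\nu$ collapse via standard interlacing telescoping; and the residual $(n-1)\times(n-1)$ determinant recovers $so^{\mathrm{odd}}_{\nu}(X_1,\dots,X_{n-1})$. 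An alternative route would bypass Weyl's formula entirely: interpret the numerator determinant via Lindstr\"om--Gessel--Viennot, with the two terms in each entry corresponding to a lattice path and its reflection across $X_i\leftrightarrow X_i^{-1}$, so that each pair of rows $(2i-1,2i)$ of the pattern encodes one path and the integer/half-integer dichotomy of the starters records on which side of the reflecting hyperplane the path lies.

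The main obstacle is the careful bookkeeping near $\lambda_n=0$: the correction $(1+[\lambda_n=0])$ arises because when $\lambda_n=0$ one is forced to take $\nu_{n-1}=0$ in every summand, so the two spin families of split patterns collapse and must be divided out to avoid double-counting. Maintaining the parity convention (all non-starter entries integer, or all half-integer) across the induction, and tracking this correction factor consistently through both the algebraic and combinatorial steps, is where most of the technical work will reside.
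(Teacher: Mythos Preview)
The paper does not prove this theorem at all: it simply quotes it as ``the first part of Theorem~7.1 in \cite{Pro94}'' and uses it as a black box. So there is no proof in the paper to compare against.

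That said, your inductive strategy via the two-step branching $SO(2n+1)\downarrow SO(2n)\downarrow SO(2n-1)$ is exactly the mechanism behind Proctor's result (his paper is precisely about realising classical characters as generating functions of Gelfand-type patterns via iterated branching), so you are reconstructing the original argument rather than inventing a new one. Your description of peeling off rows $2n-1$ and $2n$, identifying the starter $\mu_1$ as ranging over both integers and half-integers (thereby hitting both the tensor and spin representations of $SO(2n)$), and summing the resulting geometric series is the correct skeleton.

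One caution: your explanation of the factor $(1+[\lambda_n=0])$ is not quite right. It is not that ``two spin families collapse''; rather, when $\lambda_n=0$ the numerator determinant acquires an extra factor because the $j=n$ column becomes $X_i^{-1/2}-X_i^{1/2}$, which is already a factor of the Weyl denominator, and the formula as written would otherwise give half the correct value. (You can see this already in your base case: for $n=1$, $\lambda_1=0$, the unique pattern has weight $1$, but the right-hand side without the correction is $1/2$.) So the bookkeeping you flag as ``the main obstacle'' is indeed where care is needed, but the source of the factor is an artifact of the determinantal numerator vanishing to higher order, not a combinatorial collapse.
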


Now the right-hand side of \eqref{littlewoodbounded} can be written as
\begin{multline*} 
\frac{ \det_{1 \le i, j \le n} \left( X_i^{j-1} - X_i^{m+2n-j}  \right)
}{\prod_{i=1}^n (1-X_i) \prod_{1 \le i < j \le n} (X_j-X_i)(1-X_i X_j)} \\ 
=
\prod_{i=1}^n X_i^{(m-1)/2+n} \frac{ \det_{1 \le i, j \le n} \left( X_i^{j-n-(m+1)/2} - X_i^{-j+n+(m+1)/2}  \right)
}{\prod_{i=1}^n (1-X_i) \prod_{1 \le i < j \le n} (X_j-X_i)(1-X_i X_j)}, 
\end{multline*} 
so that we can deduce from Theorem~\ref{split} that it is equal to 
$$\prod_{i=1}^n X_i^{m/2} so^{\text{odd}}_{(m/2,m/2,\ldots,m/2)}(X_1,\ldots,X_n).$$
From \eqref{littlewoodbounded}, it now follows that 
\begin{equation} 
\label{orthogonal}
\sum_{\lambda \subseteq (m^n)} s_{\lambda}(X_1,\ldots,X_n) = 
\prod_{i=1}^n X_i^{m/2} so^{\text{odd}}_{(m/2,m/2,\ldots,m/2)}(X_1,\ldots,X_n).
\end{equation}
A combinatorial proof of this fact can be found in \cite[Corollary 7.4]{Ste90}.

It would be interesting to see whether there is a bijective proof of \eqref{orthogonal} that uses RSK. More concretely, under the bijection that is used in the classical bijective proof of \eqref{littlewood} semistandard Young tableaux whose shape is in $(m^n)$ correspond to two-line arrays such that the longest increasing subsequence of the bottom row has at most $m$ elements, see \cite[Proposition 7.23.10]{Sta99}.

Next we argue that we can also read off the $m$ from the two-line array we use for our simplified proof of \eqref{littlewood}, in the following sense. The longest increasing subsequence of the bottom row of the ``classical'' two-line array can be read off the corresponding matrix $A$ with non-negative integers 
as follows: we consider walks through the matrix with unit $\rightarrow$-steps and unit $\downarrow$-steps and add up the entries we traverse. The maximal sum we can achieve with such a path is the length of the longest increasing subsequence of the bottom row of the classical two-line array. Now, if the matrix $A$ is symmetric, we can confine such walks to be weakly above the main diagonal and the two-line array of the simplified algorithm is constituted by this part of the matrix.

Finally, we give a bijective proof of \eqref{orthogonal} in the case $n=2$. The left hand side can be seen as the generating function of semistandard Young tableaux with entries in $\{1,2\}$, with the weight 
$$
X_1^{\# \text{ of $1$'s}} X_2^{\# \text{ of $2$'s}}.
$$
Such tableaux have at most $2$ rows and can be encoded by three non-negative integers $x,y,z$: let $y$ be the number of $2$'s in the 
second row, $z$ be the number of $2$'s in the first row and $x+y$ be the number of $1$'s, which are necessarily in the first row. The two-line array that corresponds to such a tableau under our simplified algorithm is constituted by $x$ columns $\left( 1 \atop 1 \right)$, 
$y$ columns $\left( 2 \atop 1 \right)$ and $z$ columns $\left( 2 \atop 2 \right)$, ordered lexicograhpically. The corresponding $4$-split pattern can be obtained as follows: Add $\frac{x+y+z}{2}$ to all entries of the following $4$-split pattern.
$$
\begin{array}{cccc}
\frac{-x-y-\min(x,z)}{2}  &              &              &             \\
             & - \min(x,z)  &              &            \\
\frac{-y-z-\min(x,z)}{2} &               & 0 &              \\
             & 0 &              & \phantom{1234} 0 
\end{array}
$$

\section{Further combinatorial interpretations of the left-hand sides} 
\label{furtherLHS} 

\subsection{Generating function of AGTPs with respect to the bottom row} 

Setting $X_1=X_2=\ldots=X_n=1$ in Theorem~\ref{robbins}, we see that the generating function of AGTPs with bottom row $k_1,\ldots,k_n$ and with respect to the weight 
\begin{equation}
\label{weight2} 
\sgn(A) t^{\emptyset} u^{\nearrow} v^{\nwarrow} w^{\nenwarrow} 
\end{equation} 
is 
\begin{multline} 
\label{gfun}
(t+u+v+w)^n  
\prod_{1 \le i < j \le n} \left( t  + u  \e_{k_i} + v  \e_{k_j}^{-1} + w \e_{k_i} \e_{k_j}^{-1} \right) 
\prod_{1 \le i < j \le n} \frac{k_j-k_i+j-i}{j-i} \\ 
=(t+u+v+w)^n  
\prod_{1 \le i < j \le n} \left( t \e_{k_j}  + u  \e_{k_i} \e_{k_j} + v  + w \e_{k_i}  \right) 
\prod_{j=1}^n \e_{k_j}^{-j+1} \prod_{1 \le i < j \le n} \frac{k_j-k_i+j-i}{j-i} \\
=(t+u+v+w)^n  
\prod_{1 \le i < j \le n} \left( t \e_{k_j}  + u  \e_{k_i} \e_{k_j} + v  + w \e_{k_i}  \right) 
 \prod_{1 \le i < j \le n} \frac{k_j-k_i}{j-i}, 
\end{multline}
using the fact $s_{(k_n,k_{n-1},\ldots,k_1)}(1,\ldots,1) = \prod_{1 \le i < j \le n} \frac{k_j-k_i+j-i}{j-i}$, 
which follows from \cite[(7.105)]{Sta99} when taking the limit $q \to 1$.  

Generalizing a computation in Section~6 of \cite{halved2009} slightly, it can be seen that the coefficient of
$X_1^{k_1} X_2^{k_2} \cdots X_n^{k_n}$ in 
$$
(t+u+v+w)^n \prod_{i=1}^n X_i^{-n+1} (1-X_i)^{-n} \prod_{1 \le i < j \le n} (X_j-X_i)(u+ t X_i + w X_j + v X_i X_j) 
$$
is the generating function of AGTPs with bottom row $k_1,k_2,\ldots,k_n$ as given in \eqref{gfun}, when interpreting the rational function 
as a formal Laurent series in $X_1,X_2,\ldots,X_n$ with $(1-X_i)^{-1}= \sum_{k \ge 0} X_i^k$ and assuming 
$(k_1,k_2,\ldots,k_n) \ge 0$. Phrased differently, for any $(k_1,\ldots,k_n),(m_1,\ldots,m_n) \in \mathbb{Z}^n$ 
with $(k_1+m_1,\ldots,k_n+m_n) \ge 0$, the coefficient of $X_1^{m_1} \cdots X_n^{m_n}$ in 
$$
(t+u+v+w)^n \prod_{i=1}^n X_i^{-n+1-k_i} (1-X_i)^{-n} \prod_{1 \le i < j \le n} (X_j-X_i)(u+ t X_i + w X_j + v X_i X_j) 
$$
is the generating function of AGTPs with bottom $(k_1+m_1,\ldots,k_n+m_n)$. 
Therefore, the coefficient of $X_1^{m_1} \cdots X_n^{m_n}$ in
\begin{multline*} 
(t+u+v+w)^n \\ 
\times \sym_{X_1,\ldots,X_n} \left[ \prod_{i=1}^n X_i^{-n+1-k_i} (1-X_i)^{-n} \prod_{1 \le i < j \le n} (X_j-X_i)(u+ t X_i + w X_j + v X_i X_j) \right] 
\end{multline*} 
is the generating function of pairs of AGTPs and permutations $\sigma$, where the difference of the bottom row and $(k_1,\ldots,k_n)$ 
is the permutation of $\{m_{1},\ldots,m_{n}\}$ given by $\sigma$, assuming $(k_1+m_{\sigma(1)},\ldots,k_n+m_{\sigma(n)}) \ge 0$ for every permutation $\sigma$. The latter is always satisfied if $(k_1,\ldots,k_n),(m_1,\ldots,m_n) \ge 0$. The above expression is equal to 
\begin{multline*}
(t+u+v+w)^n \prod_{i=1}^n (1-X_i)^{-n} \prod_{1 \le i < j \le n} (X_j-X_i) \\
\times \asym_{X_1,\ldots,X_n} \left[ \prod_{i=1}^n X_i^{-k_i}  \prod_{1 \le i < j \le n} (v+ w X_i^{-1} + t X_j^{-1} + u X_i^{-1} X_j^{-1}) \right]. 
\end{multline*}
We sum over all $0 \le k_1 < k_2 < \ldots < k_n \le m$.
\begin{multline*} 
(t+u+v+w)^n \prod_{i=1}^n (1-X_i)^{-n} \prod_{1 \le i < j \le n} (X_j-X_i) \\
\times  \asym_{X_1,\ldots,X_n} \left[ \prod_{1 \le i < j \le n} (v+ w X_i^{-1} + t X_j^{-1} + u X_i^{-1} X_j^{-1})
\sum_{0 \le k_1 < k_2 < \ldots < k_n \le m} X_1^{-k_1} X_2^{-k_2} \cdots X_n^{-k_n}  \right] 
\end{multline*} 
For $(m_1,\ldots,m_n) \ge 0$, the coefficient of $X_1^{m_1} \cdots X_n^{m_n}$ is the generating function of pairs of AGTPs $A$ and permutations $\sigma$ of $\{1,2,\ldots,n\}$ such that, if $(m_{\sigma(1)},\ldots,m_{\sigma(n)})$ is added to the bottom row of $A$, we obtain a strictly increasing sequence of non-negative integers. In particular, the constant term is the generating function of AGTPs (with respect to the weight \eqref{weight2}), whose bottom row is a strictly increasing sequence of non-negative integers, multiplied by $n!$. 
Setting $t=u=v=1$, this is by \eqref{rincluded} equal to 
\begin{multline*}
(3+w)^n \prod_{i=1}^n (1-X_i)^{-n} \prod_{1 \le i < j \le n} (X_j-X_i) \\ \times 
\frac{\det_{1 \le i, j \le n} \left( X_i^{-j+1}  (1+X_i^{-1})^{j-1} (1+ w X_i^{-1})^{n-j}  - X_i^{-m-2n+j} (1+X_i)^{j-1}  (1+w X_i)^{n-j} \right)}{\prod\limits_{i=1}^n (1-X_i^{-1}) \prod\limits_{1 \le i < j \le n} (1-X_i^{-1} X_j^{-1})} \\
= (3+w)^n \prod_{i=1}^n (1-X_i)^{-n} \prod_{1 \le i < j \le n} (X_j-X_i) \\ \times
\frac{\det_{1 \le i, j \le n} \left( X_i^{-j+2}  (1+X_i)^{j-1} (w+ X_i)^{n-j}  - X_i^{-m-n+j} (1+X_i)^{j-1}  (1+w X_i)^{n-j} \right)}{\prod\limits_{i=1}^n (X_i-1) \prod\limits_{1 \le i < j \le n} (X_i X_j-1)}.
\end{multline*} 

\subsection{Generating function of alternating sign triangles with respect to the positions of the $1$-columns} 

Alternating sign triangles have been introduced recently in \cite{extreme}. 

\begin{definition} An alternating sign triangle (AST) with $n \ge 1$ rows is a triangular array with $n$ centered rows of the following shape 
$$
\begin{array}{ccccccc} 
a_{1,1} & a_{1,2} & \ldots & \ldots & \ldots & \ldots & a_{1,2n-1} \\
             & a_{2,2} & \ldots & \ldots & \ldots & a_{2,2n-2} &   \\
             &              & \ldots & \ldots & \ldots &                &   \\
             &              &           & a_{n,n}           &                & 
\end{array}
$$
such that $a_{i,j} \in \{0,1,-1\}$, non-zero entries alternate in each row and column, all rows sum to $1$ and the topmost non-zero entry (if any) in each column is $1$.
\end{definition} 

Next we give an example of an AST with $5$ rows.
$$
\begin{array}{ccccccc} 
0 & 0 & 0 & 1 & 0 & 0 & 0 \\
   & 0 & 1 & -1 &0 & 1 &   \\
   &    & 0 & 0 & 1 &    &   \\
   &    &    & 1 &    &    &
\end{array}
$$

It is known that there is the same number of $n \times n$ ASMs as there is of ASTs with $n$ rows, but no bijection is known so far. It has even been possible to identify certain equidistributed statistics, see \cite{Fis19a,Fis19b,extreme}. 

The columns of an AST sum to $0$ or $1$. A column that sums to $1$ is said to be a $1$-column.
The central column is always a $1$-column. Since the sum of all entries in an AST with $n$ rows is $n$, there are precisely $n-1$ other $1$-columns. A certain type of generating function with respect to the $1$-columns has been derived in 
\cite[Theorem~7]{Fis19a}. It involves one other statistic, which we introduce next: A $11$-column is a $1$-column with $1$ as bottom element, while a $10$-column is a $1$-column with $0$ as bottom element. For an AST, $T$ we define 
\begin{multline*} 
\rho(T)= \# \text{$11$-columns left of the central column} \\
+ \# \text{$10$-columns right of the central column} + 1 
\end{multline*} 

\begin{theorem}
\label{ASTtheorem}
Let $n$ be a positive integer, $0 \le r \le n-1$ and $0 \le j_1 < j_2 < \ldots < j_{n-1} \le 2n-3$. The coefficient of 
$t^{r-1} X_1^{j_1} X_2^{j_2} \cdots X_{n-1}^{j_{n-1}}$ in 
\begin{equation}
\label{ASTgfun} 
\prod_{i=1}^{n-1} (t+X_i) \prod_{1 \le i < j \le n-1} (1+X_i + X_i X_j)(X_j-X_i)
\end{equation} 
is the number of ASTs $T$ with $n$ rows, $\rho(T)=r$  and $1$-columns in positions $j_1,j_2,\ldots,j_{n-1}$, where we exclude the central column and count from the left starting with $0$.  
\end{theorem}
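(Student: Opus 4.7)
This is essentially \cite[Theorem~7]{Fis19a}, so my plan is to outline how the proof there may be recast within the arrowed Gelfand-Tsetlin pattern framework of Section~\ref{agtp}. The aim is to identify \eqref{ASTgfun} as a specialization of the bialternant expression from Corollary~\ref{bialternant} and to translate the resulting data, via monotone triangles and $n\times n$ alternating sign matrices, into AST data with the statistics $\rho$ and the positions of the $1$-columns.

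First, I would rewrite \eqref{ASTgfun} in asymmetrized form, exploiting the fact that the antisymmetric factor $\prod(X_j-X_i)$ is already present. After a short manipulation and the reindexing $X_i\mapsto X_{n-i}$, the inner polynomial $\prod_{i=1}^{n-1}(t+X_i)\prod_{i<j}(1+X_i+X_iX_j)$ takes a form that is a specialization of the product in Corollary~\ref{bialternant}: the off-diagonal factors $(v+wX_i+tX_j+uX_iX_j)$ specialize to $(1+X_i+X_iX_j)$, while the diagonal factors (with $i=j$ in the product there) are replaced by $(t+X_i)$, which functions as a deformation parameter marking a distinguished local configuration of the pattern. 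In particular, the coefficients of $\prod X_i^{j_i}$ in \eqref{ASTgfun} become (signed) counts of AGTPs whose bottom row is a permutation of $(j_1,\ldots,j_{n-1})$, refined by $t$.

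Next, I would invoke the classical bijection between monotone triangles with bottom row $(1,2,\ldots,n)$ and $n\times n$ ASMs, and compose it with the bijection between ASMs and ASTs mentioned in~\cite{extreme}. The point is that each choice of monomial in the expansion of $\prod(t+X_i)\prod(1+X_i+X_iX_j)$ corresponds to a choice of local pattern in the monotone triangle, which in turn determines, under the AST correspondence, the position of a $1$-column together with the bit of information (``$11$-left'' versus ``$10$-right'', etc.) that feeds into~$\rho$. Reading off which monomial has been selected in each factor yields the strictly increasing sequence $j_1<j_2<\cdots<j_{n-1}$ as the vector of column positions.

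The main obstacle is identifying the $\rho$-statistic on the AST side: the combination ``$11$-left $+$ $10$-right $+\,1$'' is not manifestly tracked by a single factor of the product, and the key technical step is to show, by a small case analysis at each $1$-column, that the reflection symmetry of ASTs about the central column, together with the asymmetry built into~$\rho$, causes exactly the factor $\prod_{i=1}^{n-1}(t+X_i)$ to mark $\rho-1$. This is the heart of the original argument in \cite{Fis19a}, and once it is carried out, the theorem follows at once from the AGTP interpretation established in the first step.
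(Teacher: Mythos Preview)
The paper does not prove this theorem at all; it simply cites \cite[Theorem~7]{Fis19a} and uses the result. So there is no ``paper's own proof'' to compare against, and your first sentence already identifies the correct reference.

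That said, your proposed outline contains a fatal gap. You write that you would ``compose it with the bijection between ASMs and ASTs mentioned in~\cite{extreme}.'' No such bijection exists --- the present paper states this explicitly just before Theorem~\ref{ASTtheorem}: ``It is known that there is the same number of $n \times n$ ASMs as there is of ASTs with $n$ rows, but no bijection is known so far.'' The equinumerosity of ASMs and ASTs was established in \cite{extreme} by non-bijective means, and the refinement with the $\rho$-statistic and the $1$-column positions in \cite{Fis19a} is likewise proved via a constant term identity, not a bijection. Since your second and third steps hinge entirely on transporting monotone-triangle data to AST data through this nonexistent bijection, the argument cannot be completed as sketched.

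There is also a secondary problem with the first step: the polynomial \eqref{ASTgfun} is not antisymmetric in $X_1,\ldots,X_{n-1}$, because the factor $\prod_{i<j}(1+X_i+X_iX_j)$ is not symmetric under swapping $X_i\leftrightarrow X_j$. Hence it cannot be written as $\prod_{i<j}(X_j-X_i)$ times a symmetric function, and the ``short manipulation'' that would identify it as a specialization of the bialternant in Corollary~\ref{bialternant} (which produces a genuinely symmetric quantity after dividing by the Vandermonde) does not go through as stated. The actual proof in \cite{Fis19a} proceeds through the operator formula for monotone triangles and a constant term computation specific to ASTs, not through the AGTP machinery of this paper.
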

For what follows, the crucial question is whether we can give the coefficient of $t^{r-1} X_1^{j_1} X_2^{j_2} \cdots X_{n-1}^{j_{n-1}}$ of 
\eqref{ASTgfun} also a meaning if $(j_1,\ldots,j_{n-1})$ is not strictly increasing. Such an interpretation does not exist so far.

Phrased differently, the theorem states that the coefficient of $X_1^{m_1} X_2^{m_2} \cdots X_{n-1}^{m_{n-1}}$ in 
\begin{multline*} 
\prod_{i=1}^{n-1} (t+X_i^{-1}) X_i^{j_i} \prod_{1 \le i < j \le n-1} (1+X_i^{-1} + X_i^{-1} X_j^{-1})(X_j^{-1}-X_i^{-1}) \\
= \prod_{i=1}^{n-1} (1 + t X_i) X_i^{j_i-2n+1} \prod_{1 \le i < j \le n-1} (1+X_j + X_i X_j)(X_i-X_j)
\end{multline*} 
is the generating function of ASTs with $1$-columns in positions $j_1-m_1,j_2-m_2,\ldots,j_{n-1}-m_{n-1}$ with respect to $\rho(T)-1$, provided that 
$j_1-m_1 < j_2 - m_2 < \cdots < j_{n-1}-m_{n-1}$. Therefore, the coefficient of $X_1^{m_1} X_2^{m_2} \cdots X_{n-1}^{m_{n-1}}$ in 
$$
\sym_{X_1,\ldots,X_{n-1}} \left[ \prod_{i=1}^{n-1} (1 + t X_i) X_i^{j_i-2n+1} \prod_{1 \le i < j \le n-1} (1+X_j + X_i X_j)(X_i-X_j) \right] 
$$
is the generating function of pairs of ASTs and permutations of $\{1,2,\ldots,n-1\}$, such that 
$j_1-m_{\sigma(1)},j_2-m_{\sigma(2)},\ldots,j_{n-1}-m_{\sigma(n-1)}$ are the positions of $1$-columns, provided that 
$(j_1-m_{\sigma(1)},j_2-m_{\sigma(2)},\ldots,j_{n-1}-m_{\sigma(n-1)})$ is strictly increasing for all $\sigma$. Note that it is possible to 
satisfy the strictly increasing condition, for instance if $(j_1,\ldots,j_{n-1})$ is strictly increasing and the differences between consecutive $j_l$ are large while the $m_l$ are small.

The expression is equal to 
$$
\prod_{i=1}^{n-1} (1+t X_i) X_i^{-2n+1} \prod_{1 \le i < j \le n-1} (X_i-X_j) \, \asym_{X_1,\ldots,X_{n-1}} \left[ \prod_{i=1}^{n-1} X_i^{j_i} \prod_{1 \le i < j \le n-1} (1+X_j + X_i X_j) \right].
$$ 
We sum over all $p \le j_1 < j_2 < \ldots < j_{n-1} \le q$. 
\begin{multline}
\label{ASTlast}  
\prod_{i=1}^{n-1} (1+t X_i) X_i^{-2n+1+p} \prod_{1 \le i < j \le n-1} (X_i-X_j) \\ 
\times \asym_{X_1,\ldots,X_{n-1}} \left[  \prod_{1 \le i < j \le n-1} (1+X_j + X_i X_j) \sum_{0 \le j_1 < j_2 < \cdots < j_{n-1} \le q-p} X_1^{j_1} \cdots X_{n-1}^{j_{n-1}} \right]
\end{multline}
Now, the coefficient of $X_1^{m_1} X_2^{m_2} \cdots X_{n-1}^{m_{n-1}}$ in this expression is the generating function of pairs of, let us say, \emph{extended} ASTs and permutations of $\{1,2,\ldots,n-1\}$ such that if $m_{\sigma(1)},\ldots,m_{\sigma(n-1)}$ is added to the positions of the $1$-columns, we obtain a strictly increasing sequence of integers between $p$ and $q$. Extended refers to the fact that we now would need an extended version of Theorem~\ref{ASTtheorem} as indicated above, as we cannot guarantee that 
$(j_1-m_{\sigma(1)},j_2-m_{\sigma(2)},\ldots,j_{n-1}-m_{\sigma(n-1)})$ are strictly increasing when we sum over all $p \le j_1 < j_2 < \ldots < j_{n-1} \le q$. 

An exception in this respect is the case when all $m_l=0$. It follows that the constant term of \eqref{ASTlast} is the generating function of ASTs with $n$ rows whose $1$-columns are between $p$ and $q$.  
Using \eqref{rincluded}, this is equal to 
\begin{multline*} 
\prod_{i=1}^{n-1} \frac{(1+t X_i) X_i^{-2n+1+p}}{1-X_i} \prod_{1 \le i < j \le n-1} \frac{X_i-X_j}{1-X_i X_j} 
\det_{1 \le i, j \le n-1} \left( X_i^{j-1}  (1+X_i)^{j-1}  - X_i^{q-p+2n-2j+1} (1+X_i)^{j-1}  \right).
\end{multline*}


\end{document}